\documentclass[11pt]{article}

\usepackage[a4paper,margin=1.1in]{geometry}
\usepackage{amsmath,amssymb,amsthm,mathtools,mathrsfs}
\usepackage{enumitem}
\usepackage{array}
\usepackage{booktabs}
\usepackage{hyperref}
\hypersetup{
  hidelinks,
  pdftitle={Positivity and Asymptotics for Chenevier's Orthogonal Polynomials},
  pdfauthor={Shisong Xu},
  pdfkeywords={orthogonal polynomials, Schur functions, Verblunsky coefficients, Hausdorff moments, automorphic representations}
}
\usepackage{aliascnt}
\usepackage[nameinlink,capitalise]{cleveref}

\newtheorem{theorem}{Theorem}[section]

\newaliascnt{corollary}{theorem}
\newtheorem{corollary}[corollary]{Corollary}
\aliascntresetthe{corollary}

\newaliascnt{proposition}{theorem}
\newtheorem{proposition}[proposition]{Proposition}
\aliascntresetthe{proposition}

\newaliascnt{lemma}{theorem}
\newtheorem{lemma}[lemma]{Lemma}
\aliascntresetthe{lemma}

\newaliascnt{remark}{theorem}
\newtheorem{remark}[remark]{Remark}
\aliascntresetthe{remark}

\newaliascnt{conjecture}{theorem}
\newtheorem{conjecture}[conjecture]{Conjecture}
\aliascntresetthe{conjecture}

\crefname{theorem}{Theorem}{Theorems}
\Crefname{theorem}{Theorem}{Theorems}
\crefname{proposition}{Proposition}{Propositions}
\Crefname{proposition}{Proposition}{Propositions}
\crefname{lemma}{Lemma}{Lemmas}
\Crefname{lemma}{Lemma}{Lemmas}
\crefname{corollary}{Corollary}{Corollaries}
\Crefname{corollary}{Corollary}{Corollaries}
\crefname{remark}{Remark}{Remarks}
\Crefname{remark}{Remark}{Remarks}
\crefname{conjecture}{Conjecture}{Conjectures}
\Crefname{conjecture}{Conjecture}{Conjectures}
\crefname{section}{Section}{Sections}
\Crefname{section}{Section}{Sections}

\newcommand{\R}{\mathbb R}
\newcommand{\C}{\mathbb C}
\newcommand{\Q}{\mathbb Q}
\newcommand{\T}{\mathbb T}
\newcommand{\one}{\mathbf 1}
\newcommand{\dd}{\,\mathrm d}
\DeclareMathOperator{\Si}{Si}

\title{Positivity and Asymptotics for Chenevier's Orthogonal Polynomials}
\author{Shisong Xu\\[0.35em]
\small Department of Mathematics, Nanjing University\\
\small 22 Hankou Road, Nanjing, Jiangsu 210093, People's Republic of China\\
\small Email: \href{mailto:shsxu@smail.nju.edu.cn}{shsxu@smail.nju.edu.cn}}
\date{}

\begin{document}
\maketitle

\begin{abstract}
We prove the strict positivity conjectured by Chenevier for the critical vectors in the
unconditional part of his automorphic Hermite--Minkowski theorem. The proof establishes
strict negativity of all Verblunsky coefficients of a circle measure associated with the
weight $(\arcsin x)/x$ on $(-1,1)$. A positive-kernel formula and the classical Schur
algorithm give these signs, and a para-orthogonal transformation yields positivity in
every degree. We also compute the positive density representing the negative of the
Schur function as a Hausdorff moment generating function. After rescaling their indices
to $[0,1]$, the normalized critical vectors converge weakly to the arcsine law, while
Chenevier's critical scale is asymptotic to $8\pi/n$. At the critical boundary, a single
nonzero effective integral vector is negative for every admissible test function exactly
in odd degree and in degree zero. Finally, we prove an exact first-variation formula for
exponential perturbations of the Legendre measure and derive its asymptotics for endpoint
cusps. For the perturbation leading to Chenevier's weight, the derivative at the Legendre
measure has a $(\log n)/(\pi^2n^2)$ term and an explicit constant at order $n^{-2}$.
The corresponding nonlinear asymptotic remains conjectural.
\end{abstract}

\noindent\textbf{Keywords.}
Orthogonal polynomials; Schur functions; Verblunsky coefficients; Hausdorff moments;
automorphic representations.

\medskip
\noindent\textbf{2020 Mathematics Subject Classification.}
Primary 42C05; Secondary 30E05, 11F70.

\medskip
\noindent\textbf{Acknowledgements.}
The author thanks the Professor Kenier Castillo for the bibliographical suggestions and for
pointing out the classical Stieltjes/complete-Bernstein reduction in
\cref{prop:classical-stieltjes}.

\section{Introduction}

The weight $(\arcsin x)/x$ on $(-1,1)$ occurs in the unconditional part of Chenevier's
automorphic generalization of the Hermite--Minkowski theorem
\cite[Section 2.14]{Chenevier2020}. The archimedean quadratic forms in his argument,
associated with the limiting functions
\[
 F_\infty(t)=\frac1{\cosh(t/2)},\qquad
 H_\infty(t)=e^{-t/2}F_\infty(t)=\frac{2e^{-t}}{1+e^{-t}},
\]
lead to the orthogonal polynomials $P_n$ for this weight, normalized by $P_n(1)=1$.
His critical vector $v_n=v_n^{H_\infty}$ is given by
\[
 e^{in\theta}P_n(\cos\theta)=\sum_{k=0}^n v_{n,k}e^{2ik\theta}.
\]
Chenevier computed these vectors for $n\leq25$ and conjectured that all their coordinates
are positive in every degree \cite[Sections 2.14 and 6.4]{Chenevier2020}.

\begin{theorem}[Strict positivity]\label{thm:intro-positivity}
For every $n\geq0$ and $0\leq k\leq n$, one has $v_{n,k}>0$.
\end{theorem}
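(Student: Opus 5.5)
We follow the route announced in the abstract: we pass from the interval to the unit circle via the Szegő map and read off the signs of the Verblunsky coefficients. Let $w(x)=(\arcsin x)/x$ on $(-1,1)$ and let $\mu$ be the even measure on $\T$ with $\dd\mu(\theta)=w(\cos\theta)|\sin\theta|\dd\theta$, suitably normalized. The classical Szegő correspondence expresses $P_n(\cos\theta)$ through the monic orthogonal polynomials $\Phi_m$ of $\mu$, up to normalization:
\[
 e^{in\theta}P_n(\cos\theta)\;\propto\;\Phi_{2n}(e^{i\theta})+\Phi_{2n}^*(e^{i\theta})\qquad (z=e^{i\theta}).
\]
The right-hand side is a para-orthogonal polynomial, and its coefficients are exactly the $v_{n,k}$. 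Since $\mu$ is symmetric under $\theta\mapsto-\theta$ and $\theta\mapsto\pi-\theta$ (the weight is even in $x$), the measure $\mu$ is invariant under $z\mapsto -z$. Hence all odd-indexed Verblunsky coefficients vanish, and the polynomial lives in $z^2$. Writing $u=z^2$, we are reduced to a measure $\nu$ on $\T$ (the pushforward of $\mu$ under $z\mapsto z^2$) with real Verblunsky coefficients $\beta_j$. We then need to show that $\varphi_n(u)+\varphi_n^*(u)$ has positive coefficients, where $\varphi_n$ is monic orthogonal for $\nu$.

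The key steps, in order, are as follows.

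\emph{Step 1.} We show that $\beta_j<0$ for all $j\ge0$.

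\emph{Step 2.} We deduce coefficient positivity from Step 1 by a para-orthogonal transformation. Using the Szegő recursion $\varphi_{j+1}=u\varphi_j-\bar\beta_j\varphi_j^*$ and $\varphi_{j+1}^*=\varphi_j^*-\beta_j u\varphi_j$, a straightforward induction with $\beta_j<0$ shows that both $\varphi_j$ and $\varphi_j^*$ have nonnegative coefficients. Moreover, $\varphi_j^*$ has constant term $1$ and $\varphi_j$ has leading coefficient $1$. The sum $\varphi_n+\varphi_n^*$ therefore has nonnegative coefficients. For strictness, note that the coefficient of $u^k$ in $\varphi_n^*$ is at least $|\beta_{n-1}|$ times the coefficient of $u^{k-1}$ in $\varphi_{n-1}$ plus the corresponding coefficient of $\varphi_{n-1}^*$. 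An induction on $j$ then shows that every coefficient of $\varphi_j^*$ is strictly positive once all $\beta_i\neq0$. Finally, we check that the normalization constant relating $\varphi_n+\varphi_n^*$ to $P_n$ is positive; this holds because $P_n(1)=1$ and the sum at $u=1$ is positive.

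\emph{Step 3.} This is the proof of Step 1, which I expect to be the main obstacle. We run the Schur algorithm. Let $f$ be the Schur function of $\nu$, so $f_0=f$ and
\[
 f_{j+1}(u)=\frac{f_j(u)-\beta_j}{u\,(1-\beta_j f_j(u))},\qquad \beta_j=f_j(0).
\]
The plan is to prove that $-f$ is a Hausdorff moment generating function, $-f(u)=\int_0^1\frac{\rho(t)\dd t}{1-tu}$ with $\rho>0$, computed explicitly from the moments of $\arcsin x/x$. These moments are $\int_0^1 x^{2m-1}\arcsin x\dd x$, which are expressible through central binomial ratios. We then show that this class, negatives of Stieltjes-type functions with positive kernel, is preserved by the Schur step. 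Concretely, we would write $f_j=-\int_0^1 \frac{\dd\sigma_j(t)}{1-tu}$ and express $f_{j+1}$ via a positive-kernel formula. The numerator $f_j(u)-f_j(0)=-u\int\frac{t\dd\sigma_j}{1-tu}$ is negative-definite in the right sense, and the denominator $1+|\beta_j|\int\frac{\dd\sigma_j}{1-tu}$ is the reciprocal of a complete Bernstein/Stieltjes-type function. The closure properties of Stieltjes functions (the classical Stieltjes/complete-Bernstein reduction of \cref{prop:classical-stieltjes}) should show that the quotient is again of the form $-\int\frac{\dd\sigma_{j+1}}{1-tu}$ with $\sigma_{j+1}$ a nonzero positive measure. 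It then follows that $\beta_{j+1}=-\sigma_{j+1}([0,1])<0$, and nonzeroness persists because $\nu$ is not finitely supported.

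The delicate points are establishing the base case, namely the positivity of the density $\rho$ for this specific weight, and verifying that the Stieltjes closure property yields a genuinely positive measure at each step rather than a signed one. I would attack the base case by an explicit computation of $f$ from the Carathéodory function $F=(1+uf)/(1-uf)$ of $\nu$, written via the Hilbert transform of $\arcsin x/x$.
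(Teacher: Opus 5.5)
Your reduction to the squared lift and your Step 2 coincide with the paper's argument. Your $\nu$ is the paper's $\sigma$, and the Szeg\H{o} recursion gives $\varphi_n+\varphi_n^*=(1-\beta_{n-1})(u\varphi_{n-1}+\varphi_{n-1}^*)$, which is \eqref{eq:para} up to a positive factor. (A small slip: invariance under $z\mapsto-z$ kills the \emph{even}-indexed Verblunsky coefficients of your $\mu$, and $\alpha_{2j+1}(\mu)=\beta_j$.)

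The gap is Step 1, which carries the whole content of the theorem and is only announced. You never obtain $f$, so the base case is not established: you show neither that $-f$ has positive Taylor coefficients nor that it has a positive Hausdorff density. The missing input is a closed form for the Carath\'eodory function. From $\sum_n s_nz^n=\log\bigl(1+1/\sqrt{1-z}\bigr)$ and the substitution $q=4u/(1+u)^2$, for which $\sqrt{1-q}=(1-u)/(1+u)$, one gets $F(u)=\frac{1-u}{1+u}\cdot\frac{L-\log(1-u)}{L}$. Now take the positive kernel $D_*(u)=\int_0^1\frac{1-s}{1+s}\frac{\dd s}{1-su}$. The identity $u(1+u)D_*(u)=2Lu+(1-u)\log(1-u)$ then gives $F=1-uD_*/L$. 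So every Fourier moment of $\nu$ of positive index is negative. Moreover $-f=D_*/(2L-uD_*)=\frac{D_*}{2L}\sum_{r\ge0}\bigl(\frac{uD_*}{2L}\bigr)^r$ visibly has strictly positive coefficients. Your Hausdorff target is true; the paper computes $\rho$ explicitly in \cref{thm:stieltjes-schur}. But reaching it requires a Stieltjes-representation argument plus Stieltjes inversion, which the sign argument does not need.

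Your inductive step also fails as written. Since $\beta_j=-a$ and $f_j=-G$, the denominator is $1-\beta_jf_j=1-aG$, not $1+aG$. Moreover, a quotient of two Stieltjes-type functions is not Stieltjes in general, so no closure property applies to the quotient as you set it up. A correct version puts $H=(G-a)/u$ and writes $-f_{j+1}(-x)=1/\bigl((1-a^2)/H(-x)+ax\bigr)$. This is the reciprocal of a complete Bernstein function, and analyticity of $f_{j+1}$ on $\C\setminus[1,\infty)$ then confines the representing measure to $[0,1]$. This is the mechanism of \cref{prop:classical-stieltjes}.

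You only need $\beta_j=f_j(0)<0$, however. The much weaker invariant ``all Taylor coefficients negative'' propagates by a formal-series identity:
\[
-f_{j+1}=\frac{(G-a)/u}{1-aG},\qquad \frac1{1-aG}=\frac1{1-a^2}\sum_{r\ge0}\Bigl(\frac{a(G-a)}{1-a^2}\Bigr)^r,
\]
and both factors have nonnegative coefficients. This is \cref{lem:negative-preserved}, and with it the Stieltjes machinery can be dropped entirely.
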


We prove this by passing to a conjugation-symmetric measure on the unit circle.
An explicit positive kernel shows that its Fourier moments of positive index are negative.
The Schur algorithm then gives strictly negative Verblunsky coefficients. The Szeg\H{o}
recurrence and a para-orthogonal transformation yield the positivity of $v_n$.
We also compute the positive density representing the negative Schur function as a
Hausdorff moment generating function. This gives a stronger moment property than is
needed for the sign argument.

The Stieltjes property of $g(-x)$, where $g$ is the negative Schur function, follows
directly from the standard reciprocal duality between Stieltjes and complete Bernstein
functions \cite[Theorem 7.3]{SchillingSongVondracek2012}; the density is then obtained by
Stieltjes inversion. These are applications of established theory. The input specific to
the positivity problem is the strict sign statement for this circle measure and its
consequence for every critical vector. The explicit density provides an additional
Hausdorff moment property, but is not required for the shorter positivity proof.

Put $L=\log2$ and $\psi=\Gamma'/\Gamma$. The critical matrix, value, and scale are
\begin{equation}\label{eq:intro-critical-data}
 \begin{gathered}
 A_n=\left(L+\psi\left(\frac{1+|i-j|}{2}\right)\right)_{0\leq i,j\leq n},\\
 t_n=v_n^{\mathsf T}A_nv_n,\qquad r(n)=2\pi e^{-t_n}.
 \end{gathered}
\end{equation}
Here $t_n$ denotes Chenevier's $t_n^{H_\infty}$, not his $t(n)=\log r(n)$
\cite[Definition 2.18]{Chenevier2020}. Since $P_n(1)=1$, the coordinates of $v_n$ sum to
one. Together with \cref{thm:intro-positivity}, this makes
$\xi_n=\sum_{k=0}^n v_{n,k}\delta_{k/n}$ a probability measure for every $n\geq1$.

\begin{theorem}[Critical values and limiting distribution]\label{thm:intro-limits}
The sequence $(t_n)$ is strictly increasing. As $n\to\infty$,
\begin{equation}\label{eq:intro-limits}
 t_n=\log n-\log4+o(1),\qquad nr(n)\longrightarrow8\pi,
 \qquad \xi_n\Longrightarrow\frac{\dd x}{\pi\sqrt{x(1-x)}}.
\end{equation}
\end{theorem}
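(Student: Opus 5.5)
We will work throughout on the unit circle, using the conjugation-symmetric measure $\mu$ attached to the weight $(\arcsin x)/x$. The first step is to express $t_n$ extremally. Chenevier's construction identifies $v_n$ as the minimizer of the quadratic form $v\mapsto v^{\mathsf T}A_nv$ over vectors with $\sum_k v_k=1$ (equivalently $P(1)=1$). We therefore plan to check that $A_n$ is, up to an additive rank-one constant and sign, the Toeplitz matrix of the Fourier moments of $\mu$ read in the variable $z=e^{2i\theta}$. Concretely, the entries $L+\psi((1+m)/2)$ should come from integrating $\cos(2m\theta)$ against the archimedean density $H_\infty$ transformed to the circle. Then $t_n=\min\{v^{\mathsf T}A_nv:\sum v_k=1\}$. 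The minimum over a larger space is no larger, and each $v_n$ is a legitimate competitor in degree $n+1$ after padding by zero. This gives $t_{n+1}\ge t_n$ if the form is a maximum, or the reverse if it is a minimum. We will fix the sign convention from the Christoffel-function interpretation: $t_n$ should be $-\log$ of, or affinely related to, the Christoffel function $\lambda_n(1)^{-1}=\sum_{k\le n}|\varphi_k(1)|^2$ of $\mu$ at $z=1$. Strictness then follows because each added term $|\varphi_{n+1}(1)|^2$ is nonzero. This nonvanishing holds because the Szeg\H{o} recursion with strictly negative Verblunsky coefficients $\alpha_k<0$ (established earlier) gives $\varphi_{k+1}(1)=\rho_k^{-1}(\varphi_k(1)-\bar\alpha_k\varphi_k^*(1))$, which is real and strictly larger than $\varphi_k(1)/\rho_k>0$.

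Next comes the asymptotics of $t_n$. The weight $(\arcsin x)/x$ is comparable to the Legendre weight up to a square-root cusp $\pi/2-\sqrt{2(1-x)}+\dots$ at $x=\pm1$. We will use the explicit Verblunsky/Schur data, or directly the Christoffel function at the endpoint. For Legendre, $\sum_{k\le n}(2k+1)/2\sim n^2/2$, and the Christoffel function at $1$ of a weight $w$ continuous and positive at the endpoint behaves like $\sim 2/(w(1)n^2)$ by the standard Bessel-type endpoint universality (Lubinsky; Máté--Nevai--Totik). Translating to $t_n$, the factor $e^{-t_n}$ should be proportional to the Christoffel function of a weight in $\theta$ whose endpoint behaviour yields a linear rather than quadratic growth in $n$. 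The precise constant $\log 4$ will come from the normalization $r(n)=2\pi e^{-t_n}$ and $w(\pm1)=\pi/2$. We will carry this out by relating $t_n$ to $\sum_{k\le n}|\varphi_k(1)|^2$ and using the Verblunsky coefficient asymptotics $\alpha_k\sim -c/k$ (derivable from the Schur function / Hausdorff density computed earlier). With these, $\prod\rho_k^{-1}$ and $\varphi_k(1)$ behave like $k^{c}$, and the sum is computed by a Riemann-sum argument. The second limit, $nr(n)\to8\pi$, is then immediate from $r(n)=2\pi e^{-t_n}=2\pi\cdot 4n^{-1}(1+o(1))$.

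For the weak limit of $\xi_n$, we compute moments through the generating identity $\sum_k v_{n,k}e^{2ik\theta}=e^{in\theta}P_n(\cos\theta)$. Writing $P_n$ via the para-orthogonal representation $P_n(\cos\theta)\propto\Re\bigl(z^{-n/2}\varphi_n^*(z)(\dots)\bigr)$, the coefficients $v_{n,k}$ are essentially the coefficients of a para-orthogonal polynomial at $z=1$, normalized. The characteristic function is $\int e^{isx}\,d\xi_n=\sum_k v_{n,k}e^{isk/n}=e^{is/2}P_n(\cos(s/2n))$. This reduces the claim to the Mehler--Heine type limit $P_n(\cos(s/2n))\to J_0(s/2)$, normalized by $P_n(1)=1$. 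Indeed $e^{is/2}J_0(s/2)$ is exactly the characteristic function of the arcsine law on $[0,1]$. Mehler--Heine asymptotics hold for Jacobi-type weights whose endpoint exponents are $0$, and since $(\arcsin x)/x$ is continuous and positive at $\pm1$ we expect $\alpha=\beta=0$ behaviour, hence $J_0$. Positivity from \cref{thm:intro-positivity} makes $\xi_n$ probability measures, so Lévy's continuity theorem concludes. The main obstacle is the endpoint analysis common to both limits: the weight has a square-root cusp at $\pm1$ and is not analytic there, so the Mehler--Heine and Christoffel asymptotics require either Riemann--Hilbert methods with a non-analytic weight or a comparison argument. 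We will first try the comparison route, using the Verblunsky coefficients and the Szeg\H{o} recursion, and show $\alpha_k=-1/(2k)+o(1/k)$ from the Hausdorff density near its endpoint.
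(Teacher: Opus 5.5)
There are genuine gaps in all three parts.

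\textbf{Monotonicity.} The extremal problem has the opposite sign. By Chenevier's Propositions 2.2 and 2.6, $q_n$ is negative definite on $\ker\phi_n$. So $v_n$ is the unique \emph{maximizer} of $q_n$ on $\{\phi_n=1\}$, and for $n\geq1$ the infimum there is $-\infty$. Once the sign is corrected, your padding argument is the paper's. Strictness then needs only uniqueness, since $(v_n,0)\neq v_{n+1}$ because $v_{n+1,n+1}\neq0$.

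\textbf{Asymptotics of $t_n$.} The Christoffel-function interpretation you use for the sign, for strictness and for the asymptotics is false. After removing the rank-one constant, $A_n$ is the regularized Toeplitz matrix of $4L\,|1-z|^{-2}\sigma$, not of $\sigma$. Concretely, $a_m=a_0+4L\int_\T(1-\operatorname{Re}z^m)\,|1-z|^{-2}\,\dd\sigma(z)$, where $a_m=L+\psi((m+1)/2)$. Writing $p(z)=\sum_kx_kz^k$, this gives $q_n(x)=a_0\phi_n(x)^2-4L\int_\T\bigl(|p(z)|^2-|p(1)|^2\bigr)|1-z|^{-2}\,\dd\sigma(z)$. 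Hence $t_n=a_0+4L\,\|\Pi_{n-1}(z-1)^{-1}\|^2_{L^2(\sigma)}$, where $\Pi_{n-1}$ is the orthogonal projection onto polynomials of degree less than $n$. So $t_n$ grows because $(z-1)^{-1}\notin L^2(\sigma)$, not through $\varphi_k(1)$.

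Your own heuristic shows the mismatch. With $\alpha_k\sim-1/(2k)$ the recursion gives $\varphi_k(1)=k^{1/2+o(1)}$, so $\sum_{k\leq n}|\varphi_k(1)|^2=n^{2+o(1)}$. That yields $2\log n$ or a power of $n$, never $\log n$. Moreover, the input $\alpha_k=-1/(2k)+o(1/k)$ is not proved. You give no way to extract it from $\rho$ through the nonlinear Schur iteration, and smooth-background Fisher--Hartwig results do not apply directly at the cusp. Nor is $\log4$ an endpoint quantity fixed by $w(\pm1)=\pi/2$. In the paper it arises as $-\max_\xi I(\xi)$ over probability measures on $[0,1]$, and as $-(\psi(1/2)+\gamma)$.

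\textbf{The missing idea.} The paper uses a two-sided bound that needs no Verblunsky asymptotics.

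From below: $H_\infty(t)\leq e^{-t/2}$, with equality at $t=0$, gives $q_n^{H_\infty}\geq q_n^{e^{-t/2}}$ as quadratic forms. Chenevier's Propositions 2.7 and 2.12 then give $t_n\geq\psi(1/2)+\sum_{j=1}^{n}j^{-1}=\log n-\log4+o(1)$.

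From above: $L+\psi((d+1)/2)<\log(d+1)$ together with $v_{n,k}>0$ gives $t_n-\log n\leq\iint\log(|x-y|+1/n)\,\dd\xi_n(x)\,\dd\xi_n(y)$. This is where strict positivity is indispensable. Along any weakly convergent subsequence, the $\limsup$ of the right side is at most $I(\xi)\leq-\log4$, with equality only for the arcsine law.

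The squeeze yields $t_n=\log n-\log4+o(1)$, hence $nr(n)\to8\pi$. It also identifies every subsequential limit of $\xi_n$.

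\textbf{Weak convergence.} Your identity $\widehat{\xi_n}(s)=e^{is/2}P_n(\cos(s/2n))$ is correct, and $e^{is/2}J_0(s/2)$ is indeed the arcsine characteristic function. By L\'evy's theorem, however, this reduction is an equivalence. The Mehler--Heine limit for this weight is exactly as hard as the weak convergence itself, and you only assert that you expect it. To close it you would need one of two things:
\begin{itemize}
\item a hard-edge theorem whose hypotheses you verify for a factor that, in $x=\cos\vartheta$, is only Lipschitz with $|\vartheta|$-cusps at the endpoints;
\item Verblunsky asymptotics, which you have not established.
\end{itemize}
Even with the weak limit in hand, the asymptotic of $t_n$ would still require the energy bounds above.
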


The proof uses the variational characterization of $v_n$ and logarithmic energy on
$[0,1]$. Strict positivity also gives a distinction at the critical boundary $r=r(n)$:
a single effective integral vector is negative for every admissible test function exactly
when $n=0$ or $n$ is odd. In positive even degree, such a vector exists for each test
function separately, but no common vector exists. This reflects the rationality distinction
already noted by Chenevier \cite[Corollary 2.16 and the proof of Proposition 6.5]{Chenevier2020}.
These results concern his quadratic forms and do not change the numerical thresholds in
the automorphic finiteness theorem.

We next study the recurrence coefficients through the deformation
\begin{equation}\label{eq:intro-deformation}
 \begin{gathered}
 B_*(x)=\log\frac{\arcsin x}{x},\qquad B_*(0)=0,\\
 \dd\eta_t(x)=\frac{e^{tB_*(x)}}{\displaystyle\int_{-1}^1 e^{tB_*(u)}\,\dd u}\,\dd x.
 \end{gathered}
\end{equation}
Let $\alpha_j(t)$ be the Verblunsky coefficients of its circle lift, defined in
\cref{sec:positivity}. At $t=0$ this is the Legendre measure, with
$\alpha_{n-1}(0)=-1/(2n+1)$; at $t=1$ it is the measure studied above.

\begin{theorem}[Linear response]\label{thm:intro-response}
There is an explicit real constant $C_{\mathrm{lin}}$ such that
\begin{equation}\label{eq:intro-response}
 \left.\frac{\mathrm d}{\mathrm dt}\alpha_{n-1}(t)\right|_{t=0}
 =\frac{\log n}{\pi^2n^2}+\frac{C_{\mathrm{lin}}}{n^2}+o(n^{-2}).
\end{equation}
The constant is given by \eqref{eq:Clinear}.
\end{theorem}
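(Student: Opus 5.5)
The derivative will come from an exact first-variation formula for Verblunsky coefficients under the perturbation $e^{tB}$ of an even weight on $(-1,1)$, specialized to the Legendre measure and $B=B_*$. The circle lift of $\eta_t$ is conjugation-symmetric, so the $\alpha_j(t)$ are real. By the Geronimus relations, $\alpha_{n-1}$ is a rational function of the Jacobi coefficients of $\eta_t$, and hence of its Hankel moments. Differentiating at $t=0$ gives a linear functional of $B$:
\[
 \left.\frac{\mathrm d}{\mathrm dt}\alpha_{n-1}(t)\right|_{t=0}=\int_{-1}^1 B(x)\,K_n(x)\,\dd x,
\]
where $K_n$ is an explicit combination of squares and products of Legendre polynomials of degree about $n$ and $2n$, because the lift doubles degrees. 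The normalization of $\eta_t$ contributes a constant correction, which removes the mean of $B$. I would derive $K_n$ in this form: for the Legendre measure, the derivative of a monic orthogonal polynomial norm is $h_m'=\int B\,p_m^2\,\dd\eta$. The Geronimus relation writes $\alpha_{n-1}$ in terms of ratios $p_m(1)/p_m(-1)$ or norms of polynomials orthogonal for $(1\pm x)\dd x$. So $K_n$ should be a difference of normalized $P^{(1,0)}$ and $P^{(0,1)}$ Jacobi squares, minus Legendre squares. Checking the first-variation formula against small $n$ with $B=x^2$ will fix the constants.

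The second step is the asymptotics of $\int B_*K_n$. Away from $\pm1$, $B_*$ is real-analytic and $K_n$ oscillates. The cancellation built into $K_n$ (it annihilates constants and equilibrium-type contributions) should make the interior contribution $O(n^{-2})$ with an explicit constant. Using Darboux/Bessel (Mehler–Heine) asymptotics, this constant should be expressible as $\int B_*''$-type or Hilbert-transform integrals against the arcsine density. The endpoint behaviour is the essential part. Near $x=1$, $\arcsin x=\pi/2-\sqrt{2(1-x)}+O((1-x)^{3/2})$, so
\[
 B_*(x)=\log\frac{\pi}{2}-\frac{2\sqrt2}{\pi}\sqrt{1-x}+O(1-x).
\]
The constant is killed by the normalization, and the $\sqrt{1-x}$ cusp is what produces the $\log n$.

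I would split $\int B_*K_n$ with a smooth partition of unity into an interior piece and two endpoint pieces; by evenness the endpoint pieces are equal. At the endpoint, set $1-x=z^2/(2n^2)$ and use Hilb-type uniform approximations of $K_n$ by Bessel kernels $J_0,J_1$ in $z$. The cusp term $-\frac{2\sqrt2}{\pi}\sqrt{1-x}$ then becomes $n^{-1}z$ times a Bessel-kernel integral. That integral decays only like $z^{-2}$ at large $z$ after averaging the oscillation, so the integral is logarithmically divergent up to $z\sim n$. This yields $n^{-2}\log n$ with coefficient $\frac{1}{\pi^2}$ after combining both endpoints. The matching with the interior region determines the $n^{-2}$ constant. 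I would define $C_{\mathrm{lin}}$ as the sum of a regularized Bessel integral, $\lim_{Z\to\infty}\left(\int_0^Z\cdots-c\log Z\right)$, and a regularized interior integral of $B_*$ against the arcsine-type limit of $n^2K_n$ with the $|1-x^2|^{-1}$ singularity removed. Such constants are typically evaluable via $\psi$, $\log2$ and $\log\pi$ through Mellin transforms of Bessel products.

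The main obstacle is the uniform control of $K_n$ in the transition zone $n^{-2}\ll1-x\ll1$, where the cancellation must be tracked to error $o(n^{-2})$. I would handle it first through the Mellin approach. Compute exactly $\int_{-1}^1(1-x^2)^{s}K_n(x)\,\dd x$, which is a ratio of Gamma functions by Legendre/Jacobi moment formulas. Expand $B_*$ in powers $(1-x^2)^{j/2}$ at the endpoints plus a smooth remainder. The $\log n$ then arises from a double pole at $s=1/2$ in the $n$-asymptotics of these Gamma ratios, and $C_{\mathrm{lin}}$ is read off from the Laurent coefficients. This avoids pointwise Bessel estimates, leaving only the smooth remainder to be bounded, by repeated integration by parts.
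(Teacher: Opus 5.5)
Your overall picture is right: an exact first variation at the Legendre measure, with the $\sqrt{1-x^2}$ cusp of $B_*$ at $\pm1$ producing the $\log n/(\pi^2n^2)$ term, and the coefficient $1/\pi^2$ you predict is correct. But neither step is carried out.

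\textbf{First step: the kernel.} The kernel is never derived. Agreement with $B=x^2$ for a few small $n$ cannot establish an identity for all $n$, and the shape you guess is off. There is no degree doubling: $\alpha_{n-1}=2c_n-1$ with $1-c_n=\ell_n/\ell_{n+1}$ involves only the endpoint-normalized polynomials of degrees $n$ and $n+1$. Under the standard, monic or orthonormal normalizations, $P^{(0,1)}_m(x)=\pm P^{(1,0)}_m(-x)$. A difference of their squares is therefore odd and pairs to zero with every even $B$. What works is to differentiate $\log\ell_m$. This gives $U_m=\frac12\int_{-1}^1B\,\mathsf P_mK_{m-1}(1,\cdot)\,\dd x$, so $P^{(1,0)}$ enters only as the kernel polynomial, as a factor rather than a square. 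Christoffel--Darboux and the identities $\mathsf P_{n-1}=x\mathsf P_n+(1-x^2)\mathsf P_n'/n$ and $\mathsf P_{n+1}=x\mathsf P_n-(1-x^2)\mathsf P_n'/(n+1)$ then collapse the even part of the kernel to $\frac1{2(2n+1)}\mathcal L(\mathsf P_n^2)$.

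\textbf{Second step: the missing idea.} This exact form is what you are missing, because $\mathcal L$ is symmetric. Since $(1-x^2)B_*'=O(\sqrt{1-|x|})$, Green's identity moves $\mathcal L$ onto $B_*$, and $\mathcal LB_*=\frac{2}{\pi\sqrt{1-x^2}}+J_*$ with $J_*\in C([-1,1])$. The singular part is integrated exactly: by Parseval, $\mathcal A(\mathsf P_n^2)=\kappa_n=\sum_j\omega_j\omega_{n-j}$, and its asymptotics come from $\sum_j\omega_jz^j=\frac2\pi K(\sqrt z)$, the complete elliptic integral. The continuous part needs only the weak convergence of $n\mathsf P_n^2\,\dd x$ to the arcsine law. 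This gives \eqref{eq:Clinear} directly, with no Hilb or Darboux uniformity and no transition-zone matching.

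\textbf{Why your asymptotic route stalls.} Your route keeps the oscillatory kernel. After $x=\cos(z/n)$, each endpoint's cusp contribution is, to leading order, $-\frac{1}{\pi n^2}\int z^2\bigl(J_0(z)^2-J_1(z)^2\bigr)\,\dd z$. Its integrand behaves like $\frac{2z}{\pi}\sin 2z$, so the ``averaging'' and matching you flag are the whole difficulty. The Mellin shortcut does not remove it, because $\int_{-1}^1(1-x^2)^sK_n\,\dd x$ is not a ratio of Gamma functions. Already for $n=3$ its $s$-dependence contains the irreducible factor $s^2-4s+5$. At $s=1/2$ it equals $-\frac{\pi}{2(2n+1)}\sum_{j=0}^{n-1}a_ja_{j+1}$ with $a_j=\binom{2j}{j}\binom{2n-2j}{n-j}4^{-n}$. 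This is a convolution whose $\log n$ asymptotics are exactly what has to be proved, so there are no Laurent coefficients to read off.

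\textbf{The constant.} Even if completed, your constant would be a regularized Bessel integral plus a regularized interior integral. You would still need to show it equals \eqref{eq:Clinear}, which the statement specifies.
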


An exact first-variation formula reduces this assertion to the endpoint behaviour of
$\mathcal LB_*$, where $\mathcal Lq=-((1-x^2)q')'$ is the Legendre operator.
The singular part is $2/(\pi\sqrt{1-x^2})$. A convolution of squared central binomial
coefficients gives the logarithmic term and its constant. The argument applies more
generally to even perturbations with this type of endpoint singularity
(\cref{thm:endpoint-response}). It treats the full perturbation, not a finite Fourier
truncation. The corresponding asymptotic at $t=1$ remains conjectural.

The density itself has a Chebyshev expansion due to Mathar
\cite[Appendix C of the expanded version]{Mathar2006}; the coefficients studied here
instead expand the orthogonal polynomials generated by that density. Our sign argument
uses the classical Schur and Szeg\H{o} theory \cite{Simon2005} and the
Delsarte--Genin transformation \cite{BraccialiSriRangaSwaminathan2016}.
Standard background on real-line orthogonal polynomials may be found
in \cite{Chihara1978,Szego1975}.

\Cref{sec:positivity} derives the moment transforms and proves the sign and density
formulas. \Cref{sec:critical} treats the critical values, limiting vectors, and integral
boundary obstruction. \Cref{sec:response} proves the linear-response theorem and states
the remaining nonlinear problem. Appendix~\ref{app:response} contains the individual Fourier-mode
expansions and endpoint sums. Throughout, $\gamma$ is Euler's constant,
$H_m=\sum_{j=1}^m j^{-1}$ with $H_0=0$, and
$\Si(x)=\int_0^x(\sin t)/t\,\dd t$.
The symbol $\one$ denotes the vector with every coordinate equal to one.

\section{Orthogonal polynomials and strict positivity}\label{sec:positivity}

Let $\mu$ be the finite even measure on $[-1,1]$ defined by
\begin{equation}\label{eq:mu}
  \dd\mu(x)=\frac{1}{\pi}\frac{\arcsin x}{x}\,\dd x,
\end{equation}
where the density is continuously extended at $x=0$, and let
$\widehat\mu=L^{-1}\mu$ be the corresponding probability measure. Let $(P_n)_{n\geq0}$
be the orthogonal polynomial sequence for $\widehat\mu$, normalized by $P_n(1)=1$.

Since $\widehat\mu$ is even and has support $[-1,1]$,
\begin{equation}\label{eq:rw-recurrence}
  P_0(x)=1,\qquad P_1(x)=x,
\end{equation}
\begin{equation}\label{eq:rw-recurrence2}
  xP_n(x)=(1-c_n)P_{n+1}(x)+c_nP_{n-1}(x),
  \qquad n\geq1,
\end{equation}
where $0<c_n<1$ and $c_0=0$. The monic polynomials $\pi_n$, with $\pi_0=1$ and $\pi_1=x$, satisfy
\begin{equation}\label{eq:monic-recurrence}
  \pi_{n+1}(x)=x\pi_n(x)-\beta_n\pi_{n-1}(x),
  \qquad
  \beta_n=(1-c_{n-1})c_n,\qquad n\geq1.
\end{equation}
This is the recurrence of a symmetric random-walk polynomial sequence
\cite{CharrisIsmail1986,vanDoornSchrijner1993}.

Write the Chebyshev expansion
\begin{equation}\label{eq:cheb-expansion}
  P_n(x)=\sum_{j=0}^{\lfloor n/2\rfloor}r_n(j)T_{n-2j}(x),
\end{equation}
where $T_m(\cos\theta)=\cos(m\theta)$. Equivalently,
\begin{equation}\label{eq:laurent-vector}
  e^{in\theta}P_n(\cos\theta)
  =\sum_{k=0}^n v_{n,k}e^{2ik\theta}.
\end{equation}
The two coefficient systems satisfy
\begin{equation}\label{eq:r-v-relation}
  v_{n,k}=\frac12r_n(k)\quad(0\leq k<n/2),
  \qquad v_{n,n-k}=v_{n,k},
\end{equation}
and, for even $n$,
\begin{equation}\label{eq:r-v-middle}
  v_{n,n/2}=r_n(n/2).
\end{equation}
Under the change of angular variable used in \cite[Proposition 2.9 and Section 2.14]{Chenevier2020},
$(v_{n,0},\ldots,v_{n,n})$ is precisely the critical vector $v_n=v_n^{H_\infty}$.

For $n\geq0$, set
\[
  s_n=\int_{-1}^1x^{2n}\,\dd\mu(x).
\]
All odd moments vanish.

The following moment formula is due to Chenevier \cite[Section 2.14]{Chenevier2020}.
We derive it together with the two transforms used below.

\begin{proposition}\label{prop:moments-transform}
One has
\begin{equation}\label{eq:moment-formula}
  s_0=L,
  \qquad
  s_n=\frac{1}{2n}\left(1-\frac{\binom{2n}{n}}{4^n}\right)
  \quad(n\geq1).
\end{equation}
Consequently,
\begin{equation}\label{eq:moment-generating}
  \sum_{n=0}^{\infty}s_nz^n
  =\log\left(1+\frac{1}{\sqrt{1-z}}\right),
  \qquad |z|<1,
\end{equation}
and the Cauchy--Stieltjes transform is
\begin{equation}\label{eq:cauchy-transform}
  \mathcal C_\mu(\zeta)
  :=\int_{-1}^1\frac{\dd\mu(x)}{\zeta-x}
  =\frac{1}{\zeta}
  \log\left(1+\frac{\zeta}{\sqrt{\zeta^2-1}}\right),
\end{equation}
for $\zeta\in\C\setminus[-1,1]$, where
$\sqrt{\zeta^2-1}\sim\zeta$ at infinity and the logarithm is the analytic branch whose
value tends to $\log 2$ as $\zeta\to\infty$.
\end{proposition}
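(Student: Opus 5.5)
The plan is to compute the moments by expanding $\arcsin$ in its Taylor series, sum them to get the generating function, and then read off the Cauchy transform from the generating function.

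\emph{Moments.} Write the density as
\[
\frac{\arcsin x}{x}=\sum_{m\ge0}\frac{\binom{2m}{m}}{4^m(2m+1)}x^{2m}.
\]
This series converges uniformly on $[-1,1]$, since its coefficients are $O(m^{-3/2})$. Integrating termwise gives
\[
s_n=\frac{2}{\pi}\sum_{m\ge0}\frac{\binom{2m}{m}}{4^m(2m+1)(2m+2n+1)} .
\]
That route leaves a series to evaluate, so I will instead use the substitution $x=\sin\phi$. Then
\[
s_n=\frac1\pi\int_{-\pi/2}^{\pi/2}\phi\,\sin^{2n-1}\phi\cos\phi\,\dd\phi .
\]
For $n\ge1$, integrate by parts with $\dd(\sin^{2n}\phi/(2n))$. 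The boundary term is $\frac{1}{\pi}\cdot\frac{1}{2n}\bigl(\frac{\pi}{2}+\frac{\pi}{2}\bigr)=\frac1{2n}$. The remaining term is
\[
-\frac1{2n\pi}\int_{-\pi/2}^{\pi/2}\sin^{2n}\phi\,\dd\phi=-\frac{1}{2n}\cdot\frac{\binom{2n}{n}}{4^n}
\]
by Wallis' formula. This yields \eqref{eq:moment-formula} for $n\ge1$.

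For $n=0$, I compute $s_0=\frac{1}{\pi}\int_{-\pi/2}^{\pi/2}\phi\cot\phi\,\dd\phi$. Integrating by parts gives $-\frac{2}{\pi}\int_0^{\pi/2}\log\sin\phi\,\dd\phi=\log 2$, using the classical value $\int_0^{\pi/2}\log\sin=-\frac{\pi}{2}\log2$.

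\emph{Generating function.} Start from $\sum_{n\ge0}\binom{2n}{n}4^{-n}z^n=(1-z)^{-1/2}$. Then
\[
\sum_{n\ge1}\frac{1}{2n}\Bigl(1-\binom{2n}{n}4^{-n}\Bigr)z^n=-\tfrac12\log(1-z)-\int_0^z\frac{(1-w)^{-1/2}-1}{2w}\,\dd w .
\]
Rather than evaluate this integral directly, I will differentiate the claimed closed form. Put $u=(1-z)^{-1/2}$, so $u'=\tfrac12u^3$. Then
\[
\frac{d}{dz}\log(1+u)=\frac{u^3}{2(1+u)}.
\]
Meanwhile the derivative of the series is
\[
\frac{1}{2z}\bigl(\tfrac{1}{1-z}-\tfrac{1}{\sqrt{1-z}}\bigr)=\frac{u^2-u}{2z}.
\]
Using $z=1-u^{-2}=(u^2-1)/u^2$, this equals
\[
\frac{u^3(u-1)}{2(u^2-1)}=\frac{u^3}{2(u+1)},
\]
so the two derivatives agree. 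The constant terms also agree, since $\log 2=s_0$. This proves \eqref{eq:moment-generating} on $|z|<1$.

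\emph{Cauchy transform.} For $|\zeta|>1$, expand
\[
\mathcal C_\mu(\zeta)=\sum_{k}\zeta^{-k-1}\int x^k\,\dd\mu=\zeta^{-1}\sum_n s_n\zeta^{-2n},
\]
since the odd moments vanish. Apply \eqref{eq:moment-generating} with $z=\zeta^{-2}$, and note that $(1-\zeta^{-2})^{-1/2}=\zeta/\sqrt{\zeta^2-1}$ with the branch $\sqrt{\zeta^2-1}\sim\zeta$. Both sides of \eqref{eq:cauchy-transform} are then analytic on $\C\setminus[-1,1]$:
\begin{itemize}
\item $\zeta/\sqrt{\zeta^2-1}$ is single-valued there;
\item $1+\zeta/\sqrt{\zeta^2-1}$ never vanishes there, because $\zeta^2=\zeta^2-1$ is impossible.
\end{itemize}
The identity therefore extends from $|\zeta|>1$ to all of $\C\setminus[-1,1]$ by analytic continuation, provided the branch of $\log$ is chosen continuously with limit $\log 2$ at infinity.

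The main obstacle is this last branch issue. $\C\setminus[-1,1]$ is not simply connected, so I need $1+\zeta/\sqrt{\zeta^2-1}$ to have zero winding around $0$ along a loop encircling the slit. I will check this from its behaviour: it is close to $2$ near infinity, and along a large circle its argument stays near $0$. Since the function is nonvanishing on the annulus-type region, its winding number around $0$ is the same on every loop encircling the slit, so the logarithm is well defined.
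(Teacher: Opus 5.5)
Your proof is correct and follows the paper's route. Your $x=\sin\phi$ integration by parts is the paper's computation, with the Wallis integral written in angular form, and the $\log\sin$ evaluation for $s_0$ and the expansion at $|\zeta|>1$ followed by analytic continuation are the same. The differences are minor: you verify the generating function by differentiating the closed form rather than summing the two known series $-\tfrac12\log(1-z)$ and $2\log\frac{2}{1+\sqrt{1-z}}$, and you spell out why the logarithm is single-valued on $\C\setminus[-1,1]$, which the paper leaves implicit.
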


\begin{proof}
The total mass is
\[
  s_0=\frac2\pi\int_0^1\frac{\arcsin x}{x}\,\dd x
  =\frac2\pi\int_0^{\pi/2}\theta\cot\theta\,\dd\theta=L,
\]
where the last equality follows by integration by parts and
$\int_0^{\pi/2}\log(\sin\theta)\,\dd\theta=-(\pi/2)\log 2$.
For $n\geq1$, integration by parts gives
\begin{align*}
  s_n
  &=\frac2\pi\int_0^1x^{2n-1}\arcsin x\,\dd x\\
  &=\frac{1}{2n}-\frac{1}{n\pi}
    \int_0^1\frac{x^{2n}}{\sqrt{1-x^2}}\,\dd x\\
  &=\frac{1}{2n}\left(1-\frac{\binom{2n}{n}}{4^n}\right).
\end{align*}
Using
\[
  \sum_{n\geq1}\frac{z^n}{2n}=-\frac12\log(1-z),
  \qquad
  \sum_{n\geq1}\frac1n\frac{\binom{2n}{n}}{4^n}z^n
  =2\log\frac{2}{1+\sqrt{1-z}},
\]
we obtain \eqref{eq:moment-generating}. For $|\zeta|>1$,
\[
  \mathcal C_\mu(\zeta)
  =\frac1\zeta\sum_{n\geq0}\frac{s_n}{\zeta^{2n}},
\]
and \eqref{eq:cauchy-transform} follows by analytic continuation.
\end{proof}

Let $\nu$ be the pushforward of $\widehat\mu$ under $x\mapsto y=x^2$. Its Markov
function is
\begin{equation}\label{eq:Mnu}
  M_\nu(z)=\int_0^1\frac{\dd\nu(y)}{1-zy}
  =\frac1L\log\left(1+\frac1{\sqrt{1-z}}\right).
\end{equation}
Push $\nu$ forward under $u=2y-1$ to a probability measure $\lambda$ on $[-1,1]$,
and let $\sigma$ be its conjugation-symmetric Szeg\H{o} lift to $\T$, so that
$\operatorname{Re}z$ has distribution $\lambda$ under $\sigma$.
Equivalently, if $X$ has distribution $\widehat\mu$, then
$e^{2i\arccos X}$ has distribution $\sigma$. We use the same construction for every
even probability measure on $[-1,1]$ below.

\begin{proposition}\label{prop:circle-lift}
The measure $\sigma$ has density
\begin{equation}\label{eq:circle-density}
  \dd\sigma(e^{i\theta})
  =\frac{(\pi-|\theta|)\tan(|\theta|/2)}{4\pi L}\,\dd\theta,
  \qquad -\pi<\theta<\pi,
\end{equation}
with the limiting values at $\theta=0,\pm\pi$. The Carath\'eodory function
$\mathcal F(z)=\int_\T (w+z)/(w-z)\,\dd\sigma(w)$ and the associated Schur function are
\begin{equation}\label{eq:F-Caratheodory}
  \mathcal F(z)=\frac{1-z}{1+z}\frac{L-\log(1-z)}{L},
  \qquad |z|<1,
\end{equation}
\begin{equation}\label{eq:f-Schur}
  f(z)=\frac{\mathcal F(z)-1}{z(\mathcal F(z)+1)}
  =-\frac{2Lz+(1-z)\log(1-z)}
  {z\bigl(2L-(1-z)\log(1-z)\bigr)}.
\end{equation}
\end{proposition}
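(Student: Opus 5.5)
The plan is to obtain the density by a direct change of variables, then to compute $\mathcal F$ by reducing the Herglotz integral over $\T$ to the Markov function \eqref{eq:Mnu} of $\nu$. The Schur function then follows by algebra.

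\emph{Density.} Let $X$ have law $\widehat\mu$, with density $(\pi L)^{-1}(\arcsin x)/x$, and write $X=\cos\phi$ with $\phi\in(0,\pi)$. The point $e^{2i\phi}$ has angle $2\phi\in(0,2\pi)$, which I reduce to $(-\pi,\pi)$. For $\theta\in(0,\pi)$ there is exactly one preimage, $x=\cos(\theta/2)$, and $x=-\cos(\theta/2)$ is the unique preimage of $-\theta$. By evenness of $\widehat\mu$ it suffices to treat $\theta>0$. There $\arcsin(\cos(\theta/2))=(\pi-\theta)/2$ and $|\dd x|=\tfrac12\sin(\theta/2)\,\dd\theta$. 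Multiplying gives
\[
\frac{1}{\pi L}\cdot\frac{(\pi-\theta)/2}{\cos(\theta/2)}\cdot\frac{\sin(\theta/2)}{2}
=\frac{(\pi-\theta)\tan(\theta/2)}{4\pi L},
\]
which is \eqref{eq:circle-density}. The boundary values at $\theta=0,\pm\pi$ are the obvious limits. This also confirms that $\operatorname{Re}w=\cos 2\phi=2X^2-1$ has law $\lambda$.

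\emph{Carath\'eodory function.} By conjugation symmetry I pair $w=e^{i\theta}$ with $\bar w$. A short computation gives
\[
\frac{w+z}{w-z}+\frac{\bar w+z}{\bar w-z}=\frac{2(1-z^2)}{1-2uz+z^2},\qquad u=\cos\theta ,
\]
so $\mathcal F(z)=(1-z^2)\int\dd\lambda(u)/(1-2uz+z^2)$. Substituting $u=2y-1$ gives $1-2uz+z^2=(1+z)^2-4yz$. Hence
\[
\mathcal F(z)=\frac{1-z}{1+z}\,M_\nu(\zeta),\qquad \zeta=\frac{4z}{(1+z)^2}.
\]
The map $z\mapsto\zeta$ sends the unit disc onto $\C\setminus[1,\infty)$, which is exactly where the integral defining $M_\nu$ is analytic. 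So \eqref{eq:Mnu} continues there, with $\sqrt{1-\zeta}$ the principal branch, equal to $1$ at $\zeta=0$.

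Since $1-\zeta=((1-z)/(1+z))^2$, and the branch is $1$ at $z=0$ and has no zeros in the disc, we get $\sqrt{1-\zeta}=(1-z)/(1+z)$. Therefore $1+1/\sqrt{1-\zeta}=2/(1-z)$, and $\log(2/(1-z))=L-\log(1-z)$ with principal $\log(1-z)$. This yields \eqref{eq:F-Caratheodory}.

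\emph{Schur function.} Formula \eqref{eq:f-Schur} is pure algebra. Write $\mathcal F\pm1$ with denominator $L(1+z)$; these numerators are
\[
(1-z)(L-\log(1-z))\mp L(1+z),
\]
that is, $-2Lz-(1-z)\log(1-z)$ and $2L-(1-z)\log(1-z)$ respectively. Dividing by $z$ gives the stated $f$. One should also note that the removable singularity at $z=0$ is harmless, since $\mathcal F(0)=1$.

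The only step needing genuine care is the branch bookkeeping in the middle step: analyticity of $M_\nu$ on the image domain, and the identification $\sqrt{1-\zeta}=(1-z)/(1+z)$ rather than its negative. I would settle both by connectedness and the value at $z=0$.
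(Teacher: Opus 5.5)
Your proposal is correct and follows the paper's proof essentially step by step. You use the same change of variables for the density and the same symmetric-kernel reduction $\mathcal F(z)=\frac{1-z}{1+z}M_\nu(4z/(1+z)^2)$, and you identify the branch as $\sqrt{1-\zeta}=(1-z)/(1+z)$ just as the paper does. The one inessential difference is that you continue in $\zeta$ over $\C\setminus[1,\infty)$, whereas the paper checks the identity near $z=0$ and continues in $z$. There is one cosmetic slip in the Schur step: the numerator of $\mathcal F\pm1$ is $(1-z)(L-\log(1-z))\pm L(1+z)$, not $\mp$, so your ``respectively'' is reversed, although your final formula for $f$ is correct.
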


\begin{proof}
For $0<\theta<\pi$, the substitutions $y=x^2$ and $u=2y-1=\cos\theta$ give
\[
  \arcsin\sqrt{\frac{1+\cos\theta}{2}}=\frac{\pi-\theta}{2}.
\]
The Jacobians yield \eqref{eq:circle-density}. For a symmetric circle measure,
\[
  \mathcal F(z)=\int_{-1}^1\frac{1-z^2}{1-2zu+z^2}\,\dd\lambda(u).
\]
Writing $u=2y-1$ and $q=4z/(1+z)^2$, we obtain
\[
  \mathcal F(z)=\frac{1-z}{1+z}M_\nu(q).
\]
For $z$ near zero, $\sqrt{1-q}=(1-z)/(1+z)$, so \eqref{eq:Mnu} gives
\eqref{eq:F-Caratheodory}. Analytic continuation extends the identity to the unit disk,
and simplification gives \eqref{eq:f-Schur}.
\end{proof}

To determine the signs of the Schur parameters, consider the positive kernel
\begin{equation}\label{eq:Dstar}
 D_*(z)=\int_0^1\frac{1-s}{1+s}\frac{\dd s}{1-sz},\qquad z\in\C\setminus[1,\infty).
\end{equation}
Partial fractions give
\[
 z(1+z)D_*(z)=2Lz+(1-z)\log(1-z).
\]
Together with \eqref{eq:F-Caratheodory}, this yields, for $|z|<1$,
\begin{equation}\label{eq:Dstar-F}
 \mathcal F(z)=1-\frac{zD_*(z)}L,\qquad
 g(z):=-f(z)=\frac{D_*(z)}{2L-zD_*(z)}.
\end{equation}
In particular, the Fourier moments $m_k^\sigma=\int_\T z^{-k}\,\dd\sigma(z)$ satisfy
\begin{equation}\label{eq:negative-circle-moments}
 m_k^\sigma=-\frac1{2L}\int_0^1 s^{k-1}\frac{1-s}{1+s}\,\dd s<0,
 \qquad k\geq1.
\end{equation}
The Schur parameters are defined by $f_0=f$ and
\begin{equation}\label{eq:Schur-iteration}
 \alpha_j=f_j(0),\qquad
 f_{j+1}(z)=\frac{f_j(z)-\alpha_j}{z(1-\overline{\alpha_j}f_j(z))}.
\end{equation}
They are the Verblunsky coefficients of $\sigma$ \cite[Chapter 1]{Simon2005}.

\begin{lemma}\label{lem:negative-preserved}
If a Schur function has the expansion $h(z)=-\sum_{m\geq0}d_mz^m$ with every $d_m>0$,
then every Taylor coefficient of its next Schur iterate is also strictly negative.
\end{lemma}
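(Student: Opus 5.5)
The plan is to compute the next Schur iterate explicitly from \eqref{eq:Schur-iteration} and observe that it is minus a product of two power series with nonnegative coefficients, one of which has strictly positive coefficients. Write $D(z)=\sum_{m\geq0}d_mz^m$, so that $h=-D$.

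First I will check that $0<d_0<1$. The bound $d_0>0$ is a hypothesis. Since $h$ is a Schur function, $|h(0)|=d_0\leq1$. If $d_0=1$, the maximum principle would force $h$ to be the constant $-1$, contradicting $d_1>0$. Hence $\alpha:=h(0)=-d_0$ is real with $0<d_0<1$.

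Next I compute the iterate. From \eqref{eq:Schur-iteration},
\[
 h_1(z)=\frac{h(z)+d_0}{z\bigl(1+d_0h(z)\bigr)}
 =-\frac{\sum_{m\geq0}d_{m+1}z^m}{1-d_0D(z)}.
\]
Put $E(z)=D(z)-d_0=\sum_{m\geq1}d_mz^m$, which has zero constant term and positive coefficients. Then
\[
 \frac1{1-d_0D(z)}=\frac1{(1-d_0^2)-d_0E(z)}
 =\frac1{1-d_0^2}\sum_{k\geq0}\left(\frac{d_0E(z)}{1-d_0^2}\right)^k .
\]
Because $E$ has no constant term, only finitely many terms contribute to each coefficient, so this identity holds as formal power series. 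It also agrees with the Taylor expansion of the analytic function $1/(1-d_0D)$ near $0$, where $1-d_0D(0)=1-d_0^2\neq0$. Since $d_0>0$ and $1-d_0^2>0$, every coefficient of this series is nonnegative, and its constant term is $(1-d_0^2)^{-1}>0$.

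Finally, multiplying by $\sum_{m\geq0}d_{m+1}z^m$ gives a series whose $m$-th coefficient is at least $d_{m+1}/(1-d_0^2)>0$. Hence every Taylor coefficient of $h_1$ is strictly negative. The only delicate points are the bound $d_0<1$ and the justification that the formal geometric expansion gives the Taylor series. Both are handled above, so I expect no serious obstacle.
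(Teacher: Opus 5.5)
Your proof is correct and is essentially the paper's argument. The paper also writes the iterate as $-\bigl((G-a)/z\bigr)/(1-aG)$ with $G=-h$, $a=d_0$, and expands the reciprocal denominator as $(1-a^2)^{-1}\sum_{r\ge0}\bigl(a(G-a)/(1-a^2)\bigr)^r$ to obtain nonnegative coefficients with positive constant term. Your extra justifications, the maximum-principle argument for $d_0<1$ and the identification of the formal expansion with the Taylor series, spell out what the paper states briefly.
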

\begin{proof}
Put $G=-h$ and $a=d_0$. Since $h$ is nonconstant, $0<a<1$. Its next iterate is
\[
 h_1(z)=-\frac{(G(z)-a)/z}{1-aG(z)},\qquad
 \frac1{1-aG(z)}=\frac1{1-a^2}
 \sum_{r\geq0}\left(\frac{a(G(z)-a)}{1-a^2}\right)^r.
\]
The series $(G-a)/z$ has strictly positive coefficients. Expanded as a formal power series, the reciprocal denominator has
nonnegative coefficients and a positive constant term. Hence every coefficient of $h_1$
is strictly negative.
\end{proof}

\begin{theorem}[Strict Schur signs]\label{thm:alpha-negative}
Every Taylor coefficient of every Schur iterate of $f$ is strictly negative. In particular,
\begin{equation}\label{eq:alpha-negative}
 -1<\alpha_j<0\qquad(j\geq0).
\end{equation}
\end{theorem}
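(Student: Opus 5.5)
The proof is an induction along the Schur algorithm \eqref{eq:Schur-iteration}. The base case is the claim that $f_0=f$ has all Taylor coefficients strictly negative. The inductive step is \cref{lem:negative-preserved}. By \eqref{eq:Dstar-F}, $g=-f=D_*/(2L-zD_*)$, and the kernel \eqref{eq:Dstar} expands as
\[
D_*(z)=\sum_{m\ge0}\delta_m z^m,\qquad \delta_m=\int_0^1 s^m\frac{1-s}{1+s}\,\dd s>0 .
\]
We write
\[
g=\frac{D_*}{2L}\sum_{r\ge0}\Bigl(\frac{zD_*}{2L}\Bigr)^r,
\]
valid as formal power series. Every summand has nonnegative coefficients, and the $r=0$ term has strictly positive coefficients $\delta_m/(2L)$. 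Hence every Taylor coefficient of $g$ is strictly positive, i.e.\ $f=-\sum d_mz^m$ with $d_m>0$. Identifying formal and analytic coefficients is legitimate because $g$ is analytic in the unit disk, being $-f$ with $f$ a Schur function.

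Next we must know that $f$ and all its iterates are genuine nonconstant Schur functions, so that the algorithm never terminates. This holds because $\sigma$ has an absolutely continuous density \eqref{eq:circle-density}, positive on a set of positive measure, so $\sigma$ has infinite support. By the standard Geronimus/Schur correspondence \cite[Chapter 1]{Simon2005}, every iterate $f_j$ is then a Schur function that is not a finite Blaschke product, and in particular $|\alpha_j|<1$ for all $j$. Applying \cref{lem:negative-preserved} inductively, if $f_j=-\sum d^{(j)}_mz^m$ with all $d^{(j)}_m>0$, then $f_{j+1}$ has the same form. The lemma's own proof uses $0<a<1$, which holds since $a=-\alpha_j=d^{(j)}_0>0$ and $|\alpha_j|<1$.

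Finally, $\alpha_j=f_j(0)=-d^{(j)}_0<0$, and combined with $|\alpha_j|<1$ this gives \eqref{eq:alpha-negative}.

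The only delicate point is the non-termination and nondegeneracy of the Schur algorithm, i.e.\ that each iterate is nonconstant with $|f_j(0)|<1$. The sign propagation itself is routine once the positivity of the coefficients of $g$ is extracted from the kernel $D_*$. Nondegeneracy is settled by the infinite support of $\sigma$. Alternatively, a constant iterate would force all its higher coefficients to vanish, contradicting strict negativity.
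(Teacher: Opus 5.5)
Your proposal is correct and follows the paper's proof. You get the base case from the strictly positive coefficients of $D_*$ and the Neumann-series expansion of $g=D_*/(2L-zD_*)$, and \cref{lem:negative-preserved} then propagates the signs along the Schur algorithm. Your justification of nonconstancy, and hence of $|\alpha_j|<1$, by infinite support of $\sigma$ or directly from the strictly negative higher coefficients, makes explicit what the paper states in one line.
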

\begin{proof}
Every Taylor coefficient of $D_*$ is strictly positive by \eqref{eq:Dstar}. Expanding
\eqref{eq:Dstar-F} at the origin gives
\[
 g(z)=\frac{D_*(z)}{2L}\sum_{r\geq0}
        \left(\frac{zD_*(z)}{2L}\right)^r.
\]
Each Taylor coefficient is strictly positive. Thus $f$ has strictly negative coefficients,
and \cref{lem:negative-preserved} applies at every step. Each iterate is nonconstant,
so $|\alpha_j|<1$.
\end{proof}

\begin{remark}
The same argument applies to any conjugation-symmetric circle probability measure of
infinite support whose Fourier moments of positive index are strictly negative.
Writing $\mathcal F=1-2A$, one has $-f=A/(z(1-A))$; its coefficients are strictly
positive because those of $A$ of positive index are strictly positive.
Equation \eqref{eq:negative-circle-moments} verifies this hypothesis for $\sigma$.
For the relation between the Schur algorithm and moment generating functions, see
\cite{Wall1940,Wall1948}.
\end{remark}

Let $\Phi_j$ be the monic orthogonal polynomials for $\sigma$, and put
$\Phi_j^*(z)=z^j\overline{\Phi_j(1/\bar z)}$. Their coefficients are real and
\begin{equation}\label{eq:Szego-recursion}
 \Phi_{j+1}(z)=z\Phi_j(z)-\alpha_j\Phi_j^*(z),\qquad \Phi_0=1.
\end{equation}
The Delsarte--Genin transformation takes the following form with our endpoint
normalization; see \cite{BraccialiSriRangaSwaminathan2016}.

\begin{proposition}[The para-orthogonal formula]\label{prop:para}
For $n\geq1$ and $z=e^{2i\theta}$,
\begin{equation}\label{eq:para}
 e^{in\theta}P_n(\cos\theta)
 =\frac{z\Phi_{n-1}(z)+\Phi_{n-1}^*(z)}{2\Phi_{n-1}(1)}.
\end{equation}
The same identity holds for the circle lift of any even probability measure on $[-1,1]$
with infinite support.
\end{proposition}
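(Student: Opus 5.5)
The plan is to show that the right-hand side of \eqref{eq:para}, viewed as a function of $x=\cos\theta$, is a polynomial of exact degree $n$ that is orthogonal to all lower-degree polynomials with respect to $\widehat\mu$. Uniqueness of orthogonal polynomials up to scaling, together with the normalization at $\theta=0$, then finishes the proof. Only the defining properties of the lift $\sigma$ and of $\Phi_{n-1}$ are used, so the argument applies verbatim to any even probability measure of infinite support.

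\emph{Step 1 (structure of the candidate).} Put $Q(z)=z\Phi_{n-1}(z)+\Phi_{n-1}^*(z)$. The coefficients of $\Phi_{n-1}$ are real, and $(z\Phi_{n-1})^*=\Phi_{n-1}^*$ in degree $n$. Hence $Q$ is self-reciprocal, $Q^*=Q$, with real coefficients. Its top coefficient is $1$ (from $z\Phi_{n-1}$), and its constant term is $1$ (from $\Phi_{n-1}^*$). Therefore
\[
 p(\theta):=e^{-in\theta}Q(e^{2i\theta})=\sum_k q_k e^{i(2k-n)\theta},\qquad q_k=q_{n-k}\in\R,
\]
is a real even trigonometric polynomial. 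It equals $p_n(\cos\theta)$ for a polynomial $p_n$ of degree exactly $n$ (the coefficient of $e^{\pm in\theta}$ is $1$) with the parity of $n$.

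\emph{Step 2 (orthogonality).} Because $\widehat\mu$ is even, it suffices to check $\int p_nq\,\dd\widehat\mu=0$ for $q$ of degree $m<n$ with $m\equiv n \pmod 2$. Write $q(\cos\theta)=e^{-im\theta}R(e^{2i\theta})$ with $\deg R=m$. For real $\theta$, $q(\cos\theta)$ is real and equals its conjugate. Hence
\[
 p_n(\cos\theta)\,q(\cos\theta)=e^{-in\theta}Q(z)\,\overline{e^{-im\theta}R(z)}=Q(z)\,\overline{z^{(n-m)/2}R(z)},\qquad z=e^{2i\theta},
\]
which is a genuine function of $z$ because $(n-m)/2$ is an integer. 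Since $e^{2i\arccos X}\sim\sigma$ when $X\sim\widehat\mu$, the integral equals $\langle Q,z^{k}R\rangle_{L^2(\sigma)}$ with $k=(n-m)/2$. Here $z^kR$ lies in $\operatorname{span}\{z,\dots,z^{n-1}\}$, because $k\geq1$ and $k+m=(n+m)/2\leq n-1$. Now $z\Phi_{n-1}$ is orthogonal to $z\cdot\operatorname{span}\{1,\dots,z^{n-2}\}$. Moreover $\Phi_{n-1}^*$ is orthogonal to $\operatorname{span}\{z,\dots,z^{n-1}\}$, which is the standard reversal of the orthogonality of $\Phi_{n-1}$ to $\{1,\dots,z^{n-2}\}$. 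Hence $Q$ is orthogonal to this span, and the integral vanishes.

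\emph{Step 3 (normalization).} By Steps 1 and 2, $p_n$ is a scalar multiple of $P_n$. At $\theta=0$ we have $z=1$, and $Q(1)=\Phi_{n-1}(1)+\Phi_{n-1}^*(1)=2\Phi_{n-1}(1)$ since the coefficients are real. Also $\Phi_{n-1}(1)\neq0$, because the zeros of $\Phi_{n-1}$ lie in the open disk. Dividing by $2\Phi_{n-1}(1)$ therefore gives the value $1$ at $x=1$, which matches $P_n(1)=1$ and yields \eqref{eq:para}.

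The main point requiring care is Step 2. One must verify that the exponents land exactly in $\{1,\dots,n-1\}$, and use the parity reduction correctly so that the integrand is a single-valued function on $\T$. Everything else is bookkeeping.
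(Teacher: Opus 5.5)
Your proposal is correct and follows essentially the paper's argument. Self-reciprocity of $z\Phi_{n-1}+\Phi_{n-1}^*$ gives a degree-$n$ polynomial in $\cos\theta$ of parity $n$; its orthogonality in $L^2(\sigma)$ to $z,\dots,z^{n-1}$ yields the real-line orthogonality after the parity reduction; and evaluation at $z=1$ fixes the normalization. The differences are cosmetic: you test against a general parity-$n$ polynomial $e^{-im\theta}R(z)$ instead of the paired monomials behind $\cos(r\theta)$, and you get $\Phi_{n-1}(1)\neq0$ from the zeros lying in the open disk rather than from the product $\prod_{j=0}^{n-2}(1-\alpha_j)$.
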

\begin{proof}
Set $\mathcal V_n=z\Phi_{n-1}+\Phi_{n-1}^*$. This is a real self-reciprocal polynomial of degree
$n$, with extreme coefficients equal to one. Thus
$e^{-in\theta}\mathcal V_n(e^{2i\theta})$ is a polynomial of degree $n$ in $\cos\theta$, of parity
$n$. For $1\leq\ell\leq n-1$, circle orthogonality gives
\[
 \int_\T z^{-\ell}\mathcal V_n(z)\,\dd\sigma(z)=0.
\]
For the first summand, use the test monomial $z^{\ell-1}$ against $\Phi_{n-1}$.
For the second, use $\Phi_{n-1}^*(z)=z^{n-1}\overline{\Phi_{n-1}(z)}$ on $\T$ and
conjugate the orthogonality relation with $z^{n-1-\ell}$.
For $0\leq r<n$ with $r\equiv n\pmod2$, multiplication by $\cos(r\theta)$ pairs the
indices $\ell=(n-r)/2$ and $(n+r)/2$, both in this range. The corresponding real-line
polynomial is therefore orthogonal to every lower-degree polynomial of parity $n$.
Opposite parity is automatic. Its value at $x=1$ is $2\Phi_{n-1}(1)$, proving the formula.
The denominator is positive: \eqref{eq:Szego-recursion} gives
$\Phi_{n-1}(1)=\prod_{j=0}^{n-2}(1-\alpha_j)>0$ for real $|\alpha_j|<1$.
\end{proof}

\begin{theorem}\label{thm:critical-vector-positive}
For every $n\geq0$, all coefficients $v_{n,k}$ and all Chebyshev coefficients $r_n(j)$
in \eqref{eq:laurent-vector} and \eqref{eq:cheb-expansion} are strictly positive.
For $n\geq1$,
\begin{equation}\label{eq:extreme-coordinates}
 v_{n,0}=v_{n,n}=\frac1{2\prod_{j=0}^{n-2}(1-\alpha_j)}.
\end{equation}
\end{theorem}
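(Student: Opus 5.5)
The plan is to read off the coefficients of $v_n$ directly from the para-orthogonal formula in \cref{prop:para}, once the coefficients of $\Phi_{n-1}$ have been signed. The case $n=0$ is trivial, since $P_0=1$ gives $v_{0,0}=r_0(0)=1$. From now on fix $n\geq1$.

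\emph{Step 1: sign the coefficients of $\Phi_j$ and $\Phi_j^*$.} I will show by induction on $j$, using the Szeg\H{o} recursion \eqref{eq:Szego-recursion} and $\alpha_j<0$ from \cref{thm:alpha-negative}, that every coefficient of $\Phi_j$ and of $\Phi_j^*$ is strictly positive, for degrees $0,\dots,j$. The base case $\Phi_0=\Phi_0^*=1$ is immediate. For the inductive step, the forward recursion gives
\[
\Phi_{j+1}=z\Phi_j+|\alpha_j|\Phi_j^*,
\]
and the starred recursion gives
\[
\Phi_{j+1}^*=\Phi_j^*+|\alpha_j|z\Phi_j,
\]
using that the $\alpha_j$ are real. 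Both right-hand sides are sums of polynomials with nonnegative coefficients. In each, one summand covers degrees $1,\dots,j+1$ with strictly positive coefficients, and the other covers degrees $0,\dots,j$ with strictly positive coefficients. Hence every coefficient in degrees $0,\dots,j+1$ is strictly positive. The leading coefficient of $\Phi_{j+1}$ is $1$, and its constant term is $|\alpha_j|\Phi_j^*(0)=|\alpha_j|>0$.

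\emph{Step 2: read off $v_n$.} By \eqref{eq:para} with $z=e^{2i\theta}$, the vector $v_n$ is the coefficient vector of
\[
\frac{z\Phi_{n-1}(z)+\Phi_{n-1}^*(z)}{2\Phi_{n-1}(1)}.
\]
The denominator equals $2\prod_{j=0}^{n-2}(1-\alpha_j)$, which is positive. By Step 1, the numerator has strictly positive coefficients in every degree $0,\dots,n$, so $v_{n,k}>0$ for $0\le k\le n$.

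For \eqref{eq:extreme-coordinates}, look at the extreme coefficients. The degree-$0$ coefficient of the numerator is $\Phi_{n-1}^*(0)=1$, because $\Phi_{n-1}$ is monic. The degree-$n$ coefficient is the leading coefficient of $z\Phi_{n-1}$, which is also $1$. This gives $v_{n,0}=v_{n,n}=1/\bigl(2\prod_{j=0}^{n-2}(1-\alpha_j)\bigr)$.

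\emph{Step 3: pass to the Chebyshev coefficients.} Positivity of the $r_n(j)$ follows from \eqref{eq:r-v-relation} and \eqref{eq:r-v-middle}. For $k<n/2$ these give $r_n(k)=2v_{n,k}>0$, and for even $n$ they give $r_n(n/2)=v_{n,n/2}>0$.

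The only point that needs care is Step 1. The recursion for $\Phi_{j+1}^*$ must be derived correctly from the real-coefficient Szeg\H{o} recursion, and the induction must establish \emph{strict} positivity in every degree, including the constant term of $\Phi_{j+1}$. Everything else is read off directly from the earlier propositions.
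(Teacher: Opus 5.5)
Your proposal is correct and is essentially the paper's proof. Induction on the Szeg\H{o} recursion with $\alpha_j<0$ gives strictly positive coefficients for every $\Phi_j$, and hence for $\Phi_j^*$. The assertions then follow from \eqref{eq:para}, the relations \eqref{eq:r-v-relation}--\eqref{eq:r-v-middle}, and the unit constant and leading coefficients of $z\Phi_{n-1}+\Phi_{n-1}^*$. Your separate recursion for $\Phi_{j+1}^*$ is correct but not needed: for real coefficients $\Phi_j^*$ is the coefficient reversal of $\Phi_j$, so its positivity comes for free.
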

\begin{proof}
By \cref{thm:alpha-negative}, each step of \eqref{eq:Szego-recursion} adds a positive
multiple of $\Phi_j^*$ to $z\Phi_j$. Starting from $\Phi_0=1$, induction shows that
every coefficient of every $\Phi_j$ is strictly positive. Formula \eqref{eq:para} proves
the assertion for $v_n$; the case $n=0$ is immediate. The identities
\eqref{eq:r-v-relation}--\eqref{eq:r-v-middle} give the Chebyshev assertion. The constant and leading coefficients of $\mathcal V_n$ are one, giving
\eqref{eq:extreme-coordinates}.
\end{proof}

This proves \cref{thm:intro-positivity}. The leading coefficient of $P_n$, denoted by
$\ell_n$, is $2^{n-1}/\Phi_{n-1}(1)$ for $n\geq1$. Comparison of leading coefficients
in \eqref{eq:rw-recurrence2} therefore gives the scalar Geronimus relation
\begin{equation}\label{eq:alpha-c}
 1-c_n=\frac{\ell_n}{\ell_{n+1}}=\frac{1-\alpha_{n-1}}2,
 \qquad \alpha_{n-1}=2c_n-1\quad(n\geq1).
\end{equation}
In particular,
\begin{equation}\label{eq:c-half}
 0<c_n<\frac12\qquad(n\geq1).
\end{equation}
The argument for \eqref{eq:alpha-c}, unlike the strict inequalities, applies to every
circle lift in \cref{prop:para}.

\label{par:canonical-moments}
The continued-fraction interpretation identifies these $c_n$ as the canonical moments of
$\nu$. Indeed, symmetry and \eqref{eq:monic-recurrence} give
\[
 M_\nu(z)=\cfrac1{1-\cfrac{\zeta_1z}{1-\cfrac{\zeta_2z}{1-\ddots}}},\qquad
 \zeta_1=c_1,\quad \zeta_n=(1-c_{n-1})c_n\quad(n\geq2).
\]
Wall's parametrization of Hausdorff moment sequences has this form
\cite[Theorems 4.1 and 6.1]{Wall1940}; see also \cite{Wall1948,Sokal2020}.
Its parameters are the canonical moments \cite[Chapters 1 and 3]{DetteStudden1997},
so uniqueness of the S-fraction identifies them with the $c_n$.
The affine change $y\mapsto2y-1$ preserves canonical moments, in agreement with
\cite[Theorem 4.3]{DetteWagener2010}. The inequalities \eqref{eq:c-half} also imply
strict positivity of the Chebyshev connection coefficients through Kahler's recursion
\cite[Theorem 2.1]{Kahler2023}.

The Chebyshev coefficients of the density should be distinguished from those of $P_n$.
Mathar's expansion is
\begin{equation}\label{eq:Mathar-weight-expansion}
  \frac{\arcsin x}{x}
  =\frac{a_0}{2}+\sum_{m\geq1}a_{2m}T_{2m}(x),
  \qquad
  a_0=\frac{8G_{\mathrm{Cat}}}{\pi},
  \qquad
  a_{n+2}=-a_n+\frac{8}{\pi(n+1)^2},
\end{equation}
where $n=0,2,4,\ldots$ and $G_{\mathrm{Cat}}$ is Catalan's constant
\cite[Appendix C]{Mathar2006}. These $a_{2m}$ expand the density, whereas the $r_n(j)$ in \eqref{eq:cheb-expansion}
expand its orthogonal polynomials.

We now strengthen the coefficient signs to a Hausdorff moment representation for $g=-f$.
The explicit Schur function gives
\begin{equation}\label{eq:g-def}
 g(z)=\frac{2Lz+(1-z)\log(1-z)}
 {z\bigl(2L-(1-z)\log(1-z)\bigr)}.
\end{equation}
We use the principal branch of $\log(1-z)$ on $\C\setminus[1,\infty)$.
The apparent singularities at $z=0$ and $z=-1$ are removable, with
\[
 g(0)=\frac{2L-1}{2L},\qquad g(-1)=\frac{1-L}{1+L}.
\]
A Stieltjes function on $(0,\infty)$ has the form
\[
 \frac ax+b+\int_{(0,\infty)}\frac{\tau(\dd t)}{x+t},\qquad a,b\geq0,
 \qquad \int_{(0,\infty)}\frac{\tau(\dd t)}{1+t}<\infty.
\]
A nonzero function is Stieltjes if and only if its reciprocal is a complete Bernstein
function. We use this duality and closure of complete Bernstein functions under positive
sums \cite[Theorems 6.2 and 7.3]{SchillingSongVondracek2012}.

\begin{proposition}[Classical Stieltjes reduction]\label{prop:classical-stieltjes}
The function $q(x)=g(-x)$, for $x>0$, is a Stieltjes function.
\end{proposition}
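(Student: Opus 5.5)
The plan is to rewrite $q$ as the reciprocal of a positive sum of complete Bernstein functions. Then the duality and closure results quoted just before the statement, \cite[Theorems 6.2 and 7.3]{SchillingSongVondracek2012}, finish the argument. The starting point is \eqref{eq:Dstar-F}, $g(z)=D_*(z)/(2L-zD_*(z))$. Taking reciprocals gives
\[
 \frac1{g(z)}=\frac{2L}{D_*(z)}-z .
\]
Substituting $z=-x$ yields, for $x>0$,
\[
 \frac1{q(x)}=\frac{2L}{D_*(-x)}+x .
\]
It then suffices to show three things: that $D_*(-x)$ is a nonzero Stieltjes function, that $x$ is complete Bernstein, and that this identity is legitimate on the whole half-line $(0,\infty)$.

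\emph{Step 1 (validity on $(0,\infty)$).} The relation \eqref{eq:Dstar-F} was stated for $|z|<1$, so it must be extended. Both sides extend analytically to $\C\setminus[1,\infty)$. For $D_*$ this holds by \eqref{eq:Dstar}. For $g$ it holds by \eqref{eq:g-def} with the principal branch, whose singularities at $0$ and $-1$ are removable. The partial-fraction identity $z(1+z)D_*(z)=2Lz+(1-z)\log(1-z)$ also persists on this domain. Hence $g=D_*/(2L-zD_*)$ there, once we know the denominator does not vanish. On $z=-x<0$ we have $D_*(-x)>0$ from the positive integrand, so $2L+xD_*(-x)>0$. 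Thus $q(x)>0$, $q$ is nonzero, and the reciprocal formula holds for all $x>0$.

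\emph{Step 2 ($D_*(-x)$ is Stieltjes).} I write
\[
 D_*(-x)=\int_0^1\frac{1-s}{1+s}\frac{\dd s}{1+sx}.
\]
Setting $t=1/s$ gives $1/(1+sx)=t/(x+t)$, so
\[
 D_*(-x)=\int_{(1,\infty)}\frac{\tau(\dd t)}{x+t},\qquad
 \tau(\dd t)=\frac{t-1}{t+1}\,\frac{\dd t}{t^2}.
\]
This $\tau$ is a positive measure, and it satisfies $\int\tau(\dd t)/(1+t)<\infty$ because $\tau(\dd t)\le t^{-2}\,\dd t$. So $D_*(-x)$ is a nonzero Stieltjes function with $a=b=0$.

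\emph{Step 3 (conclusion).} By the reciprocal duality, $1/D_*(-x)$ is complete Bernstein, and so is its positive multiple $2L/D_*(-x)$. The function $x\mapsto x$ is complete Bernstein. The class of complete Bernstein functions is closed under positive sums, so $1/q$ is complete Bernstein and nonzero. Applying the duality again, $q$ is a Stieltjes function.

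I expect the main obstacle to be Step 1, namely the bookkeeping of branches needed to pass from the disk to the negative real axis. The safest route is to work with the integral representation \eqref{eq:Dstar} throughout, rather than with \eqref{eq:g-def}. Then $g$ is simply \emph{defined} on $(-\infty,0)$ by $D_*/(2L-zD_*)$, which agrees with the analytic continuation of $g$ because both are analytic on the slit plane and coincide near $0$.
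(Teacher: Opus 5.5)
Your proposal is correct and is essentially the paper's proof. Like the paper, you write $q(x)=1/\bigl(x+2L/D_*(-x)\bigr)$, represent $D_*(-x)$ as a Stieltjes function via $s=1/t$, and conclude by reciprocal duality and closure of complete Bernstein functions under positive sums; your Step~1 simply makes explicit the extension of \eqref{eq:Dstar-F} beyond the disk that the paper uses tacitly.

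There is one small slip. The substitution gives $\tau(\dd t)=\frac{t-1}{t(t+1)}\,\dd t$, not $\frac{t-1}{t^2(t+1)}\,\dd t$, because you dropped the factor $t$ from $t/(x+t)$. The corrected measure is still positive and satisfies $\int\tau(\dd t)/(1+t)<\infty$, so the argument is unaffected.
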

\begin{proof}
Set
\[
 \mathcal H(x)=\frac{(1+x)\log(1+x)}x,\qquad
 \mathcal D(x)=\frac{\mathcal H(x)-2L}{x-1},
\]
where the value at $x=1$ is defined by continuity. Partial fractions in
\eqref{eq:Dstar} identify $\mathcal D(x)=D_*(-x)$. The substitution $s=1/t$ gives
\[
 \mathcal D(x)=\int_1^\infty\frac{t-1}{t(t+1)}\frac{\dd t}{x+t},
\]
so $\mathcal D$ is a nonzero Stieltjes function. Equation \eqref{eq:Dstar-F} now gives
\[
 q(x)=\frac1{x+2L/\mathcal D(x)}.
\]
The denominator is a nonzero complete Bernstein function by reciprocal duality and
closure under positive sums; its reciprocal is Stieltjes.
\end{proof}

This reduction uses only classical reciprocal duality. In the next theorem, Stieltjes
inversion computes the representing measure for the present weight; neither the duality
nor the inversion formula is asserted as a new general result.

\begin{theorem}[Explicit Hausdorff density]\label{thm:stieltjes-schur}
For $z\in\C\setminus[1,\infty)$,
\begin{equation}\label{eq:g-Stieltjes}
  g(z)=\int_0^1\frac{\rho(s)}{1-sz}\,\dd s,
\end{equation}
where
\begin{equation}\label{eq:rho}
  \rho(s)=
  \frac{2L(1-s^2)}
  {\left(2Ls+(1-s)\log\frac{1-s}{s}\right)^2
   +\pi^2(1-s)^2},
  \qquad 0<s<1.
\end{equation}
In particular,
\begin{equation}\label{eq:f-negative-coeff}
  f(z)=-\sum_{m=0}^{\infty}b_mz^m,
  \qquad
  b_m=\int_0^1s^m\rho(s)\,\dd s>0.
\end{equation}
More strongly, for all $k,m\geq0$,
\begin{equation}\label{eq:strict-Hausdorff}
  \sum_{j=0}^k(-1)^j\binom{k}{j}b_{m+j}
  =\int_0^1s^m(1-s)^k\rho(s)\,\dd s>0.
\end{equation}
\end{theorem}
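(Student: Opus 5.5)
The plan is to obtain \eqref{eq:g-Stieltjes} by Stieltjes inversion, starting from \cref{prop:classical-stieltjes}. Put $q(x)=g(-x)$. By that proposition, $q$ is Stieltjes, so
\[
q(x)=\frac ax+b+\int_{(0,\infty)}\frac{\tau(\dd t)}{x+t}.
\]
We first pin down the data $a,b$ and the location of $\tau$.

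\emph{The constants $a,b$.} We have $a=\lim_{x\to0^+}xq(x)=0$, because $g$ is analytic at $0$ with $g(0)=(2L-1)/(2L)$. As $x\to\infty$, $\mathcal D(x)\sim(\log x)/x\to0$, so the denominator $x+2L/\mathcal D(x)$ tends to infinity and $b=\lim_{x\to\infty}q(x)=0$.

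\emph{Support of $\tau$.} The function $q$ extends analytically to $\C\setminus(-\infty,-1]$, since $g$ is analytic on $\C\setminus[1,\infty)$. We must check that the denominator $2L-(1-z)\log(1-z)$ does not vanish off the cut; equivalently, $q$ is analytic there. This follows from the Stieltjes representation itself, because a Stieltjes function with nonzero representing data is zero-free and holomorphic off $(-\infty,0]$. Its only possible singularities are therefore at cut points, and the analyticity of $g$ on $\C\setminus[1,\infty)$ then forces $\tau$ to be supported on $[1,\infty)$. Substituting $t=1/s$ and $x=-z$ turns $\int\tau(\dd t)/(t-z)$ into $\int_0^1 s\,\tilde\tau(\dd s)/(1-sz)$, which gives a representation of the form \eqref{eq:g-Stieltjes}.

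\emph{Computing the density.} By Stieltjes inversion, $\rho(s)\,\dd s$ is $\frac{1}{\pi s}\operatorname{Im} g(1/s-i0)$ up to orientation. Concretely, for $z=w\pm i0$ with $w=1/s>1$ we have $\log(1-z)=\log(w-1)\mp i\pi$. We substitute this into \eqref{eq:g-def}, rationalize the denominator $2L-(1-w)(\log(w-1)\mp i\pi)$, and take the imaginary part. After multiplying numerator and denominator by $s^2$, the factor $(1-w)\log(w-1)$ becomes $s^{-1}(s-1)\log\frac{1-s}{s}$. This should produce exactly the denominator $\bigl(2Ls+(1-s)\log\frac{1-s}{s}\bigr)^2+\pi^2(1-s)^2$. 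Working out the imaginary part of the numerator-times-conjugate should give the factor $2L(1-s^2)$. As a check, the cross terms should reduce to $\pi(w-1)\bigl(2Lw+2L\bigr)$-type terms.

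We also need to rule out atoms and any singular part at $s=0,1$ (that is, $t=\infty$ and $t=1$). At $s\to1$ the boundary values are finite and continuous, since $g(z)$ stays bounded as $z\to1$: the term $(1-z)\log(1-z)\to0$. Near $s=0$, the constant $b=0$ already excludes mass at $t=\infty$. Since the denominator never vanishes, there are no poles, so the measure is absolutely continuous with density $\rho$. As a consistency check, $\int_0^1\rho=g(0)$.

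\emph{The coefficient statements.} Given $\rho>0$ on $(0,1)$, expanding $1/(1-sz)$ yields \eqref{eq:f-negative-coeff}. The binomial identity $\sum_j(-1)^j\binom kj s^{m+j}=s^m(1-s)^k$ integrated against $\rho$ gives \eqref{eq:strict-Hausdorff}, and this is strictly positive because $\rho>0$.

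The main obstacle is the boundary-value algebra giving exactly $\rho$, together with justifying the absence of a singular component of $\tau$ at the endpoint $t=1$. For the latter, I would first use the bound $|g(w\pm i0)|\le C$ uniformly near $w=1$, which rules out an atom there.
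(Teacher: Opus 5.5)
Your proposal is correct and follows the paper's proof essentially step by step: the Stieltjes reduction of \cref{prop:classical-stieltjes}, the elimination of the $a/x$ and constant terms, and the support of $\tau$ on $[1,\infty)$ from analyticity of $q$ through $(-1,0)$; then the boundary-value computation of $\frac1\pi\operatorname{Im}g(t+i0)$ using $\log(1-t-i0)=\log(t-1)-i\pi$, the exclusion of an atom at $t=1$ because $g$ is bounded near $z=1$, and the substitution $s=1/t$. One small slip: the denominator $2L-(1-z)\log(1-z)$ does vanish at $z=-1$ (the numerator vanishes too, and $g(-1)=(1-L)/(1+L)$), so nonvanishing is not equivalent to analyticity of $q$; what you need, and what the Stieltjes representation gives, is analyticity of $q$ itself, while on $[0,1)$ analyticity of $g$ is immediate because $(1-z)\log(1-z)\leq0$ there.
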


\begin{proof}
By \cref{prop:classical-stieltjes}, $q(x)=g(-x)$ has a Stieltjes representation.
The function $g$ is bounded at zero, and
\[
  \lim_{z\to1,\ z\notin[1,\infty)}g(z)=1,
  \qquad q(x)=O(x^{-1})\quad(x\to\infty).
\]
It follows that the Stieltjes representation of $q$ has neither an $a/x$ term nor a constant
term. Since $q$ extends analytically through $(-1,0)$, its representing measure is supported
on $[1,\infty)$. Thus
$q(x)=\int_{[1,\infty)}(x+t)^{-1}\tau(\dd t)$ for $x>0$ and a positive measure $\tau$.
This measure is finite: monotone convergence applied to
$xq(x)=\int x(x+t)^{-1}\tau(\dd t)$, together with $q(x)=O(x^{-1})$, gives
$\tau([1,\infty))<\infty$.

For $t>1$, use the principal branch of $\log(1-z)$ on
$\C\setminus[1,\infty)$. The upper boundary value satisfies
$\log(1-t-i0)=\log(t-1)-i\pi$, and direct simplification gives
\begin{equation}\label{eq:Im-g}
  \varpi(t):=\frac1\pi\operatorname{Im}g(t+i0)
  =\frac{2L(t^2-1)}
  {t\left(\bigl(2L+(t-1)\log(t-1)\bigr)^2
  +\pi^2(t-1)^2\right)}>0.
\end{equation}
On every compact subinterval of $(1,\infty)$, these boundary values are continuous and are
approached uniformly from the upper half-plane. In particular, for each $u>1$,
\[
  \tau(\{u\})
  =\lim_{\varepsilon\downarrow0}
    \varepsilon\operatorname{Im}g(u+i\varepsilon)=0.
\]
Hence the endpoints of every interval $[a,b]\subset(1,\infty)$ are continuity points of
$\tau$. The Stieltjes inversion formula and dominated convergence now give
\begin{equation}\label{eq:tau-inversion}
  \tau((a,b))
  =\lim_{\varepsilon\downarrow0}\frac1\pi
    \int_a^b\operatorname{Im}g(t+i\varepsilon)\,\dd t
  =\int_a^b\varpi(t)\,\dd t,
  \qquad 1<a<b<\infty.
\end{equation}
Since such intervals form a determining class and exhaust $(1,\infty)$,
\eqref{eq:tau-inversion} shows that
$\tau|_{(1,\infty)}=\varpi(t)\,\dd t$; in particular, there is no additional singular
continuous component. There is also no atom at $t=1$, because $g$ is bounded as $z\to1$
off the cut. Thus the representing measure has been identified completely, and
\[
  g(z)=\frac1\pi\int_1^\infty
  \frac{\operatorname{Im}g(t+i0)}{t-z}\,\dd t
  =\int_1^\infty\frac{\varpi(t)}{t-z}\,\dd t.
\]
The change of variables $s=1/t$ gives \eqref{eq:g-Stieltjes}--\eqref{eq:rho}. Notice that
$\rho(s)=O(\log^{-2}(1/s))$ as $s\downarrow0$ and $\rho(s)=O(1-s)$ as $s\uparrow1$,
so the representing density is integrable. Expanding $(1-sz)^{-1}$ for $|z|<1$ and using
dominated convergence proves \eqref{eq:f-negative-coeff}. The same calculation with
$s^m(1-s)^k$ proves \eqref{eq:strict-Hausdorff}.
\end{proof}

\label{par:Pick-criterion}
The representation \eqref{eq:g-Stieltjes} is also an instance of the Pick-function
characterization of Hausdorff moment generating functions
\cite[Theorem 1]{LiuPego2016}; positivity of $\rho$ gives the strict inequalities.
The corrigendum \cite{LiuPego2026} concerns the proof of Theorem 5 there, not this criterion.
The moments $b_m$ of $\rho(s)\,\dd s$ are distinct from the moments of $\nu$,
whose S-fraction determines the recurrence parameters $c_n$.

The moment formula also gives an exact arithmetic description of the Hankel determinants.
Set
\begin{equation}\label{eq:E-O}
  E_m=\det(s_{i+j})_{0\leq i,j<m},
  \qquad
  O_m=\det(s_{i+j+1})_{0\leq i,j<m},
\end{equation}
with $E_0=O_0=1$.

\begin{proposition}\label{prop:hankel-factorization}
For $N\geq1$, let
\[
  D_N=\det\left(\int_{-1}^1x^{i+j}\,\dd\mu(x)\right)_{0\leq i,j<N}.
\]
Set $D_0=1$. Then, for $m\geq0$,
\begin{equation}\label{eq:D-even-odd}
  D_{2m}=E_mO_m,
  \qquad
  D_{2m+1}=E_{m+1}O_m.
\end{equation}
Moreover,
\begin{equation}\label{eq:beta-even-odd}
  \beta_{2m}=\frac{E_{m+1}O_{m-1}}{E_mO_m},
  \qquad
  \beta_{2m+1}=\frac{E_mO_{m+1}}{E_{m+1}O_m},
\end{equation}
where the first formula holds for $m\geq1$ and the second for $m\geq0$. Finally,
\begin{equation}\label{eq:E-Qlog}
  E_m\in\Q+\Q L,
  \qquad
  O_m\in\Q,
\end{equation}
and hence
\begin{equation}\label{eq:beta-pair-product}
  \beta_{2m}\beta_{2m+1}
  =\frac{O_{m-1}O_{m+1}}{O_m^2}\in\Q_{>0},
  \qquad m\geq1.
\end{equation}
\end{proposition}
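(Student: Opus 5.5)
Since $\mu$ is even, all odd moments of $\mu$ vanish. The Hankel matrix $(\int x^{i+j}\dd\mu)_{0\le i,j<N}$ therefore splits, after reordering rows and columns by parity, into two blocks. The even-indexed block has entries $\int x^{2i+2j}\dd\mu=s_{i+j}$ for $0\le i,j<\lceil N/2\rceil$. The odd-indexed block has entries $\int x^{2i+2j+2}\dd\mu=s_{i+j+1}$ for $0\le i,j<\lfloor N/2\rfloor$. The reordering applies the same permutation to rows and columns, so the determinant is unchanged. For $N=2m$ this gives $D_{2m}=E_mO_m$, and for $N=2m+1$ it gives $D_{2m+1}=E_{m+1}O_m$, which is \eqref{eq:D-even-odd}.

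For \eqref{eq:beta-even-odd} I will use the classical formula for the monic recurrence coefficients, $\beta_n=D_{n+1}D_{n-1}/D_n^2$ for $n\ge1$. Hankel determinants of $\mu$ and of $\widehat\mu=L^{-1}\mu$ differ by the factor $L^{-N}$, and this cancels in the ratio, so the formula applies to the recurrence \eqref{eq:monic-recurrence}. Substituting \eqref{eq:D-even-odd} for $n=2m$ gives
\[
\beta_{2m}=\frac{E_{m+1}O_m\,E_mO_{m-1}}{E_m^2O_m^2}=\frac{E_{m+1}O_{m-1}}{E_mO_m}.
\]
For $n=2m+1$ it gives
\[
\beta_{2m+1}=\frac{E_{m+1}O_{m+1}\,E_mO_m}{E_{m+1}^2O_m^2}=\frac{E_mO_{m+1}}{E_{m+1}O_m}.
\]
The case $m=0$ of the second formula uses $D_0=E_0=O_0=1$. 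All determinants involved are positive, because $\mu$ is a positive measure with infinite support, so every division is legitimate.

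For \eqref{eq:E-Qlog}, recall from \eqref{eq:moment-formula} that $s_0=L$ while $s_n\in\Q$ for $n\ge1$. The matrix defining $O_m$ involves only $s_1,\dots,s_{2m-1}$, all rational, so $O_m\in\Q$. In the matrix defining $E_m$, the entry $s_0$ occurs only in position $(0,0)$. Expanding along the first row shows that $E_m$ is affine in $s_0$: explicitly $E_m=L\cdot\det(s_{i+j})_{1\le i,j<m}+(\text{a rational number})$, so $E_m\in\Q+\Q L$.

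Finally, multiplying the two formulas in \eqref{eq:beta-even-odd} makes all $E$ factors cancel, leaving $\beta_{2m}\beta_{2m+1}=O_{m-1}O_{m+1}/O_m^2$. This is rational because each $O_k$ is rational, and positive because each $O_k>0$: indeed $O_k$ is the Hankel determinant of the positive measure $y\,\dd\nu(y)$ on $(0,1]$, up to the positive factor $L^{k}$, and that measure has infinite support. This proves \eqref{eq:beta-pair-product}. The main point requiring care is the determinant formula for $\beta_n$ together with the conventions $D_0=1$ and the $m=0$ case; the rest is bookkeeping.
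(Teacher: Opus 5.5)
Your proposal is correct and follows the same route as the paper. Like the paper, you split the Hankel matrix by parity via a simultaneous row/column permutation, apply the classical formula $\beta_n=D_{n+1}D_{n-1}/D_n^2$, and observe that $s_0=L$ occurs only in the upper-left entry of the matrix defining $E_m$, while every entry defining $O_m$ is rational. Your extra checks make explicit details the paper leaves implicit: the formula for $\beta_n$ is unchanged when passing from $\mu$ to $\widehat\mu=L^{-1}\mu$, and $O_k>0$ because it is $L^k$ times a Hankel determinant of the positive measure $y\,\dd\nu(y)$.
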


\begin{proof}
Since all odd moments vanish, a simultaneous permutation of rows and columns separates the
full Hankel matrix into its even and odd blocks, proving \eqref{eq:D-even-odd}. The standard
formula $\beta_n=D_{n+1}D_{n-1}/D_n^2$ then gives \eqref{eq:beta-even-odd}; see, for
example, \cite{Chihara1978}. By \cref{prop:moments-transform}, $s_0=L$ and
$s_n\in\Q$ for $n\geq1$. The only occurrence of $s_0$ in the matrix defining $E_m$
is its upper-left entry, whereas every entry defining $O_m$ is rational. This proves
\eqref{eq:E-Qlog}, and \eqref{eq:beta-pair-product} follows immediately.
\end{proof}

For example,
\[
  \beta_1=\frac{1}{4L},\qquad
  \beta_2=\frac{5L-2}{8L},\qquad
  \beta_3=\frac{13L}{24(5L-2)},\qquad
  \beta_2\beta_3=\frac{13}{192}.
\]
These identities permit exact computation in $\Q(L)$ using the two parity blocks.

\section{Critical values and limiting vectors}\label{sec:critical}

Let $A_n,t_n,r(n)$ be as in \eqref{eq:intro-critical-data}, and put
\[
 q_n(x)=x^{\mathsf T}A_nx,\qquad \phi_n(x)=\sum_{j=0}^n x_j.
\]
By \cite[Propositions 2.2 and 2.6]{Chenevier2020}, $q_n$ is negative definite on
$\ker\phi_n$, and
\begin{equation}\label{eq:critical-equations}
 A_nv_n=t_n\one,\qquad \phi_n(v_n)=1.
\end{equation}
Consequently,
\begin{equation}\label{eq:variational}
 q_n(x)=t_n+q_n(x-v_n)\leq t_n\qquad(\phi_n(x)=1),
\end{equation}
with equality exactly when $x=v_n$. Thus $v_n$ is the unique maximizer on this affine
hyperplane and, by strict positivity, also on the probability simplex.

\begin{proposition}\label{prop:monotone}
For every $n\geq0$, $t_{n+1}>t_n$ and $r(n+1)<r(n)$.
\end{proposition}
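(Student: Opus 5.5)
The plan is to compare the maximization problems in degrees $n$ and $n+1$ by embedding. The matrix $A_n$ is the upper-left $(n+1)\times(n+1)$ block of $A_{n+1}$, since its entries depend only on $|i-j|$ (Toeplitz). Pad $v_n$ with a final zero coordinate to get $x=(v_n,0)\in\R^{n+2}$. Then $\phi_{n+1}(x)=1$ and $q_{n+1}(x)=v_n^{\mathsf T}A_nv_n=t_n$.

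By the variational characterization \eqref{eq:variational} in degree $n+1$, we get $t_n=q_{n+1}(x)\leq t_{n+1}$. Equality holds only if $x=v_{n+1}$.

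To make the inequality strict, we rule out $x=v_{n+1}$. The last coordinate of $x$ is zero, whereas $v_{n+1,n+1}>0$ by \cref{thm:critical-vector-positive}; indeed \eqref{eq:extreme-coordinates} gives the explicit value $v_{n+1,n+1}=1/(2\prod_{j=0}^{n-1}(1-\alpha_j))$. Hence $x\neq v_{n+1}$, and so $t_{n+1}>t_n$. Since $r(n)=2\pi e^{-t_n}$ is strictly decreasing in $t_n$, it follows that $r(n+1)<r(n)$.

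The only delicate point is that \eqref{eq:variational} rests on the negative definiteness of $q_{n+1}$ on $\ker\phi_{n+1}$, quoted from Chenevier, together with the critical equations \eqref{eq:critical-equations}. These are valid for every degree including $n+1\geq1$, so the step is immediate. The case $n=0$ needs no separate treatment: $v_0=(1)$ and $t_0=L+\psi(1/2)$, while $v_1$ has both coordinates positive. Strictness there also follows from uniqueness of the maximizer. Alternatively one can shortcut by writing $q_{n+1}(x)=t_{n+1}+q_{n+1}(x-v_{n+1})$ with $x-v_{n+1}\in\ker\phi_{n+1}$ nonzero, and using strict negativity.
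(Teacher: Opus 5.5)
Your proof is correct and is essentially the paper's argument: you pad $v_n$ by a zero, note that $(v_n,0)$ has coordinate sum one and $q_{n+1}$-value $t_n$, and use the uniqueness of the maximizer in \eqref{eq:variational} together with $v_{n+1,n+1}>0$ to get strictness. Only $v_{n+1,n+1}\neq0$ is actually needed, and this already follows because $v_{n+1,n+1}$ is $2^{-(n+1)}$ times the (nonzero) leading coefficient of $P_{n+1}$.
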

\begin{proof}
The padded vector $(v_n,0)$ has coordinate sum one and $q_{n+1}$-value $t_n$.
It differs from $v_{n+1}$, whose last coordinate is strictly positive.
The uniqueness in \eqref{eq:variational} gives the first assertion, and the second follows
from $r(n)=2\pi e^{-t_n}$.
\end{proof}

The critical matrix is invertible in every degree.
\begin{lemma}\label{lem:An-invertible}
The matrix $A_n$ is invertible for every $n\geq0$, and
\begin{equation}\label{eq:critical-linear-algebra}
 t_n=\frac1{\one^{\mathsf T}A_n^{-1}\one},\qquad
 v_n=\frac{A_n^{-1}\one}{\one^{\mathsf T}A_n^{-1}\one}.
\end{equation}
\end{lemma}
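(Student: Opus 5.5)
We will use only the two facts quoted from Chenevier: $q_n$ is negative definite on $\ker\phi_n$, and the critical equations \eqref{eq:critical-equations} hold, with $v_n$ satisfying $A_nv_n=t_n\one$ and $\phi_n(v_n)=1$.

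\emph{Invertibility.} We argue by contradiction. Suppose $A_nx=0$ for some $x\neq0$. If $\phi_n(x)=\one^{\mathsf T}x=0$, then $q_n(x)=x^{\mathsf T}A_nx=0$. This contradicts negative definiteness on $\ker\phi_n$. So $\phi_n(x)\neq0$, and we may rescale so that $\phi_n(x)=1$. Then $y=x-v_n$ lies in $\ker\phi_n$. By symmetry of $A_n$,
\[
 \one^{\mathsf T}x = \frac{1}{t_n}\, v_n^{\mathsf T}A_nx = 0
\]
whenever $t_n\neq0$, which contradicts $\phi_n(x)=1$.

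We therefore need $t_n\neq0$. A cleaner route avoids this case distinction. Put $y=x-v_n\in\ker\phi_n$. Then
\[
 q_n(y)=x^{\mathsf T}A_nx-2v_n^{\mathsf T}A_nx+t_n = 0-0+t_n = t_n,
\]
using $A_nx=0$ twice. Hence $t_n=q_n(y)\le 0$, with equality only if $y=0$, i.e.\ $x=v_n$. If $x=v_n$, then $A_nv_n=0$ forces $t_n=0$.

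So in every case $t_n\le0$. It remains to show $t_n>0$. By \cref{prop:monotone} it suffices to check $t_0>0$. Here $t_0=A_{0}=L+\psi(1/2)=\log2-\gamma-2\log2=-\gamma-\log2<0$. This sign is wrong, so monotonicity alone does not exclude $t_n\le0$.

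We need a different argument. Suppose $A_nx=0$ with $x\neq0$. Since $\ker\phi_n$ has codimension one and $A_n$ is negative definite on it, $A_n$ has at least $n$ negative eigenvalues. Consider the restriction of $q_n$ to $\mathrm{span}(\ker\phi_n, x)$. Because $x\in\ker A_n$, the vector $x$ is $q_n$-orthogonal to everything, so $x\notin\ker\phi_n$. Then $\ker A_n=\mathrm{span}(x)$, and we may rescale so that $\phi_n(x)=1$.

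Now $v_n-x$ lies in $\ker\phi_n$, and $A_n(v_n-x)=t_n\one$. Pairing with $v_n-x$ gives
\[
 q_n(v_n-x)=t_n\,\phi_n(v_n-x)=0 .
\]
Negative definiteness on $\ker\phi_n$ then forces $v_n=x$, hence $t_n\one=A_nv_n=0$, so $t_n=0$.

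Finally we exclude $t_n=0$, which is the real obstacle. If $t_n=0$, then $A_nv_n=0$ and $\ker A_n=\mathrm{span}(v_n)$. We need one more input to rule this out. The available input is that $t_n=\log n-\log4+o(1)$ is not zero generically, but this is not a proof. A direct route is to note $r(n)=2\pi e^{-t_n}$ and check Chenevier's numerics for small $n$, then use strict monotonicity (\cref{prop:monotone}): once $t_N>0$ for some $N$, all later $t_n$ are positive, so at most one index can have $t_n=0$. If Chenevier's table shows that the sign change of $t_n$ happens strictly between consecutive integers, then $t_n\neq0$ for all $n$.

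\emph{Formulas.} With $A_n$ invertible, we have $v_n=t_nA_n^{-1}\one$. Applying $\phi_n$ gives $1=t_n\,\one^{\mathsf T}A_n^{-1}\one$. This shows $t_n\neq0$ and yields both identities in \eqref{eq:critical-linear-algebra}.

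The main obstacle is excluding a singular $A_n$ with $t_n=0$. I expect to settle it using the numerical sign change of $t_n$ together with strict monotonicity.
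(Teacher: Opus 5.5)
Your structural reduction is correct and matches the paper's argument. A nonzero $w\in\ker A_n$ satisfies $0=w^{\mathsf T}A_nv_n=t_n\phi_n(w)$, and negative definiteness on $\ker\phi_n$ rules out $\phi_n(w)=0$. So a singular $A_n$ forces $t_n=0$; your third pass, showing $\ker A_n=\R v_n$, reaches the same conclusion. The gap is that you never prove $t_n\neq0$. You only say what would suffice (``if Chenevier's table shows\dots''), and this is the one step of the lemma that needs real input. Your intermediate target ``it remains to show $t_n>0$'' was also a misstep. The value $t_0<0$ is not the wrong sign: your first computation already handles every $n$ with $t_n<0$, so only exact vanishing has to be excluded.

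To close the gap, you need two consecutive indices at which the sign of $t_n$ is fixed rigorously, not by floating-point inspection. The paper uses Chenevier's exact values $t_2=-\gamma+L/(4L-1)$ and $t_3=-\gamma+2/3$. You can also check $t_2$ by hand: write $v_2=(p,1-2p,p)$ and solve $A_2v_2=t_2\one$, which gives $p=L/(4L-1)$. Since $L>1/2$, one has $L/(4L-1)<1/2<\gamma$, and since $\gamma<2/3$, one gets $t_2<0<t_3$. Strict monotonicity (\cref{prop:monotone}, whose proof does not use invertibility of $A_n$) then gives $t_n<0$ for $n\leq2$ and $t_n>0$ for $n\geq3$, so no $t_n$ vanishes. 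With this step inserted, your final paragraph deriving the two formulas is correct.
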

\begin{proof}
The exact values in \cite[Section 2.14, Table 2]{Chenevier2020} are
\[
 t_0=-\gamma-L,\quad t_1=-\gamma,\quad
 t_2=-\gamma+\frac{L}{4L-1},\quad t_3=-\gamma+\frac23.
\]
Since $L>1/2$ and $1/2<\gamma<2/3$, one has
$L/(4L-1)<1/2<\gamma$. Hence $t_0,t_1,t_2<0<t_3$;
\cref{prop:monotone} shows that no $t_n$ vanishes.
If $0\ne w\in\ker A_n$, then \eqref{eq:critical-equations} gives
$0=w^{\mathsf T}A_nv_n=t_n\phi_n(w)$, so $w\in\ker\phi_n$.
Negative definiteness there contradicts $q_n(w)=0$. Thus $A_n$ is invertible,
and \eqref{eq:critical-linear-algebra} follows from \eqref{eq:critical-equations}.
\end{proof}

To study the limiting distribution of the coordinates, let $\xi$ be a probability measure
on $[0,1]$ and define its logarithmic energy by
\[
 I(\xi)=\iint_{[0,1]^2}\log|x-y|\,\dd\xi(x)\,\dd\xi(y)\in[-\infty,0].
\]
Its extremal property has the following elementary proof.

\begin{lemma}\label{lem:energy}
One has $I(\xi)\leq-\log4$. Equality holds exactly for the arcsine measure
\[
 \dd\xi(x)=\frac{\dd x}{\pi\sqrt{x(1-x)}}.
\]
\end{lemma}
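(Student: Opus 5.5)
We use the Chebyshev (cosine) expansion of $\log|x-y|$ on $[0,1]$ and reduce the energy to a sum of squared Fourier--Chebyshev coefficients. Substituting $x=(1-\cos\theta)/2$ and $y=(1-\cos\varphi)/2$ gives $x-y=(\cos\varphi-\cos\theta)/2$. The classical identity
\[
 \log|\cos\theta-\cos\varphi|=-\log2-\sum_{k\geq1}\frac{2}{k}\cos(k\theta)\cos(k\varphi),
\]
valid for $\theta\neq\varphi$, then yields
\[
 \log|x-y|=-\log4-\sum_{k\geq1}\frac2k\,T_k(1-2x)\,T_k(1-2y).
\]
For a probability measure $\xi$ on $[0,1]$ we set $\tau_k=\int T_k(1-2x)\,\dd\xi(x)$, which is real. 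Formally this gives
\[
 I(\xi)=-\log4-\sum_{k\geq1}\frac{2}{k}\tau_k^2\leq-\log4 .
\]
Equality holds iff $\tau_k=0$ for all $k\geq1$. A probability measure on $[-1,1]$ whose Chebyshev moments all vanish for $k\geq1$ has the same moments as the arcsine law, because the $T_k$ span the polynomials. By Hausdorff uniqueness it is therefore the arcsine law. Pushed to $[0,1]$, this is $\dd x/(\pi\sqrt{x(1-x)})$. Conversely, the arcsine law has $\tau_k=0$, so its energy is exactly $-\log4$.

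The main obstacle is justifying the interchange of the double integral with the conditionally convergent series. Atoms are also an issue, since the kernel is $-\infty$ on the diagonal. We would handle both with an Abel regularization. For $0<r<1$ we use the identity
\[
 \log|1-re^{i(\theta-\varphi)}|+\log|1-re^{i(\theta+\varphi)}|=-\sum_{k\geq1}\frac{2r^k}{k}\cos k\theta\cos k\varphi .
\]
This defines a bounded continuous kernel $K_r(x,y)$. Its series converges absolutely and uniformly, so termwise integration is legitimate:
\[
 \iint K_r\,\dd\xi\,\dd\xi=-\sum_{k\geq1}\frac{2r^k}{k}\tau_k^2 .
\]
As $r\uparrow1$, the kernel $K_r(x,y)-\log4$ increases to $\log|x-y|$, with the value $-\infty$ on the diagonal. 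This monotonicity in $r$ holds for each fixed pair; one checks it because $\log|1-re^{i\alpha}|$ is monotone in $r$ when $\cos\alpha\leq r$. Where that is delicate, we would instead use the termwise expression, whose tail with coefficient $r^k$ tends to the full sum by monotone convergence applied to the nonnegative terms $\frac{2r^k}{k}\tau_k^2$.

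Concretely, we would pass to the limit on both sides. On the series side, monotone convergence in $r$ gives $-\sum_k\frac2k\tau_k^2$, possibly $-\infty$. On the kernel side, we would use monotone convergence, or dominated convergence from above, since $K_r\leq\log4$ on the bounded side. This produces the identity for $I(\xi)$ including the case $I(\xi)=-\infty$, for example when $\xi$ has atoms. The inequality and the equality characterization then follow as above.
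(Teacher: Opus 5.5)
Your regularized computation is correct and matches the paper's. The paper lifts $\xi$ to a symmetric circle measure and uses $\log|1-r\zeta\bar\eta|$, whose symmetrization is $\tfrac12K_r$. The gap is the passage $r\uparrow1$ on the kernel side, which is the only real analytic content of the lemma; neither mechanism you propose justifies it.

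First, $K_r$ does not increase to $K_1=\log|x-y|+\log4$. Each $K_r$ is bounded below by $2\log(1-r)$, while $K_1=-\infty$ on the diagonal and tends to $-\infty$ near it. So $K_r>K_1$ on a neighbourhood of the diagonal for every $r<1$, and your condition $\cos\alpha\leq r$ fails there for every $r$. Monotone convergence is therefore unavailable.

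Second, the bound $K_r\leq\log4$ gives, by reverse Fatou, only
\[
 \limsup_{r\uparrow1}\iint K_r\,\dd\xi\,\dd\xi\leq\iint K_1\,\dd\xi\,\dd\xi ,
\]
that is, $I(\xi)\geq-\log4-\sum_{k\geq1}\frac2k\tau_k^2$. This is the wrong direction. It supplies the lower bound needed for the converse statement about the arcsine law. It does not give $I(\xi)\leq-\log4$, nor that equality forces $\tau_k=0$.

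What is missing is a pointwise upper bound for the singular kernel by the regularized one, with an error vanishing as $r\uparrow1$. The identity $|1-re^{i\alpha}|^2=(1-r)^2+r|1-e^{i\alpha}|^2$ gives $\log|1-e^{i\alpha}|\leq\log|1-re^{i\alpha}|-\tfrac12\log r$. Hence $K_1\leq K_r-\log r$ everywhere, trivially on the diagonal. Integrating against $\xi\otimes\xi$ gives
\[
 I(\xi)+\log4\leq-\sum_{k\geq1}\frac{2r^k}{k}\tau_k^2-\log r .
\]
Letting $r\uparrow1$ yields $I(\xi)\leq-\log4$. In the equality case, each term $\frac{2r^k}{k}\tau_k^2$ is at most $-\log r\to0$, so every $\tau_k$ vanishes, and your Hausdorff-uniqueness argument concludes. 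This is exactly the inequality the paper uses. Combined with your reverse-Fatou bound, it also gives the full identity $I(\xi)=-\log4-\sum_{k\geq1}\frac2k\tau_k^2$, including the value $-\infty$.
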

\begin{proof}
Lift $\xi$ symmetrically to a circle probability measure $\Lambda$ using
$x=(1+\cos\theta)/2$. The identity
\[
 |x-y|=\frac14|e^{i\theta}-e^{i\varphi}|
                  |e^{i\theta}-e^{-i\varphi}|
\]
gives $I(\xi)=2I_{\T}(\Lambda)-\log4$, where
$I_{\T}(\Lambda)=\iint\log|\zeta-\eta|\,\dd\Lambda(\zeta)\,\dd\Lambda(\eta)$.
This identity also holds for energy $-\infty$, by integration of kernels bounded above.
For $0<r<1$, the absolutely convergent Fourier series gives
\[
 I_{\T,r}:=\iint\log|1-r\zeta\bar\eta|\,\dd\Lambda(\zeta)\,\dd\Lambda(\eta)
 =-\sum_{k\geq1}\frac{r^k}{k}|\widehat\Lambda(k)|^2\leq0.
\]
Since $|1-r\zeta\bar\eta|^2=(1-r)^2+r|\zeta-\eta|^2$,
$I_{\T}(\Lambda)\leq I_{\T,r}-\tfrac12\log r$. Letting $r\uparrow1$ proves $I_{\T}(\Lambda)\leq0$.
If $I_{\T}(\Lambda)=0$, the same inequality gives $I_{\T,r}\to0$. Each nonpositive summand then
forces $\widehat\Lambda(k)=0$ for $k\geq1$, so $\Lambda$ is uniform.
Conversely, normalized Lebesgue measure on the circle has energy zero, as follows from
$\int_0^{2\pi}\log|1-e^{i\theta}|\,\dd\theta=0$.
Its image on $[0,1]$ is the stated arcsine measure.
\end{proof}

\begin{theorem}\label{thm:arcsine}
As $n\to\infty$,
\begin{equation}\label{eq:arcsine-limit}
 \xi_n:=\sum_{k=0}^n v_{n,k}\delta_{k/n}
 \Longrightarrow\frac{\dd x}{\pi\sqrt{x(1-x)}},
\end{equation}
and
\begin{equation}\label{eq:critical-asymptotic}
 t_n=\log n-\log4+o(1),\qquad nr(n)\longrightarrow8\pi.
\end{equation}
\end{theorem}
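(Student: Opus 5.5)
The plan is to prove matching upper and lower bounds for $t_n-\log n$. Both bounds come from the variational characterization \eqref{eq:variational}: $t_n=\max q_n(x)$ over the probability simplex, and the maximum is attained at $v_n$ by \cref{thm:critical-vector-positive}. To each probability vector $x$ we associate $\xi^x=\sum_k x_k\delta_{k/n}$ and write
\[
 q_n(x)=\sum_{i,j}x_ix_jA_{ij}=\log n+\iint \kappa_n\,\dd\xi^x\,\dd\xi^x,
\]
where $\kappa_n$ is the kernel on the grid $\frac1n\mathbb Z$ given by $A_{ij}-\log n$. The comparison rests on the standard inequalities $\log y-1/y\leq\psi(y)<\log y$ for $y>0$. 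With $m=|i-j|$, the upper one gives
\[
 A_{ij}=L+\psi\left(\frac{1+m}{2}\right)<\log(1+m)\quad(m\geq1).
\]
On the diagonal one has $A_{ii}=L+\psi(1/2)=-\gamma-L<0=\log 1$, so the bound $A_{ij}<\log(1+m)$ holds for all $i,j$. The lower one gives
\[
 A_{ij}\geq\log(1+m)-\frac{2}{1+m}\quad(m\geq1).
\]

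\emph{Upper bound and compactness.} The bound on $A_{ij}$ gives $q_n(x)\leq\log n+\iint\log(|x-y|+1/n)\,\dd\xi^x\,\dd\xi^x$. Take a subsequence along which $\xi_n=\xi^{v_n}$ converges weakly to some $\xi$ and $t_n-\log n$ converges to its limsup. Fix $\varepsilon>0$. For $n>1/\varepsilon$ the integrand is at most the continuous kernel $\log(|x-y|+\varepsilon)$. Weak convergence of $\xi_n\otimes\xi_n$ then gives $\limsup(t_n-\log n)\leq\iint\log(|x-y|+\varepsilon)\,\dd\xi\,\dd\xi$. Letting $\varepsilon\downarrow0$ and using monotone convergence (the kernels are bounded above), we get $\limsup(t_n-\log n)\leq I(\xi)\leq-\log4$ by \cref{lem:energy}.

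\emph{Lower bound.} Take $x^{(n)}_k$ to be the arcsine mass of the cell $[(k-\tfrac12)/n,(k+\tfrac12)/n]\cap[0,1]$. Then $\max_k x^{(n)}_k=O(n^{-1/2})$ and $\xi^{x^{(n)}}$ converges weakly to the arcsine law. Using the lower bound on $A_{ij}$, we split $q_n(x^{(n)})-\log n$ into three parts. The first is the main term $\sum_{i\neq j}x_ix_j\log(|i-j|/n)$. The second is the diagonal contribution $(-\gamma-L-\log n)\sum_k x_k^2$. The third is an error $-\sum_{i\neq j}x_ix_j\,2/(1+|i-j|)$. The diagonal term is $O(\log n\cdot n^{-1/2}\cdot\log n)$, since $\sum_k x_k^2\leq\max_k x_k$ weighted by the arcsine density is $O(n^{-1}\log n)$, so it tends to $0$. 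The error term is $o(1)$: the kernel $1/(1+n|x-y|)$ is bounded by $1$, tends to $0$ off the diagonal, and the arcsine law has no atoms. The main term should converge to $I(\text{arcsine})=-\log4$.

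I expect the convergence of the main term to be the main obstacle, because $\log|x-y|$ is unbounded and we need a lower bound. I would split the pairs with $|i-j|\geq M$ from those with $|i-j|<M$. For $|i-j|\geq M$ and points $u,w$ in the two cells, $\log(|i-j|/n)\geq\log|u-w|-\log(1+1/M)$, so this part is at least the cell-averaged energy over $\{|u-w|\gtrsim M/n\}$ minus $O(1/M)$. That integral tends to $I(\text{arcsine})$, since the arcsine energy integral converges absolutely. The pairs with $1\leq|i-j|<M$ have $\log(|i-j|/n)\geq-\log n$ and total mass $\leq 2M\max_k x_k$. The endpoint behaviour of the density gives mass $O(M n^{-1}\log n)$, so their contribution is $O(M\log^2 n/n)\to0$. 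Letting $n\to\infty$ and then $M\to\infty$ yields $\liminf(t_n-\log n)\geq-\log4$.

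\emph{Conclusion.} Combining the two bounds gives $t_n=\log n-\log4+o(1)$. It also forces $I(\xi)=-\log4$ for every subsequential weak limit $\xi$, which is the arcsine law by the equality case of \cref{lem:energy}. Since $[0,1]$ is compact, the whole sequence $\xi_n$ converges, proving \eqref{eq:arcsine-limit}. Finally, $nr(n)=2\pi n e^{-t_n}=2\pi e^{\log4+o(1)}\to8\pi$.
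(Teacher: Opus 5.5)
Your proof is correct. The upper bound ($\psi(u)<\log u$, positivity of $v_n$, the kernel $\log(|x-y|+1/n)$, compactness, and \cref{lem:energy}) and the final identification of the limit are essentially the paper's argument.

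The genuine difference is the lower bound. The paper constructs no test vector. It uses $H_\infty(t)\le e^{-t/2}$ together with Chenevier's Propositions 2.7 and 2.12, which give the exact inequality $t_n\ge\psi(1/2)+\sum_{j=1}^n 1/j$ for every $n$.

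You instead evaluate $q_n$ at a discretized arcsine vector and use only \eqref{eq:variational} and $\psi(y)>\log y-1/y$, which is a recovery-sequence argument. What your route buys is self-containedness: it uses nothing about $H_\infty$ beyond the entries of $A_n$, and it needs no external comparison theorem. Positivity of $v_n$ then enters only through the upper bound. What the paper's route buys is brevity and a non-asymptotic bound with $O(1/n)$ error, whereas yours gives only $\liminf(t_n-\log n)\ge-\log 4$.

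Two of your estimates can be stated more cleanly.
\begin{itemize}
\item The sentence on the diagonal term is garbled. The correct bound is $\sum_k x_k^2=O(n^{-1}\log n)$, although even the crude $\sum_k x_k^2\le\max_k x_k=O(n^{-1/2})$ makes the diagonal term $o(1)$.
\item For the pairs with $|i-j|\ge M$ you need no limiting argument. Since $\log|u-w|\le 0$ on $[0,1]^2$, restricting the arcsine energy integral to a subset can only increase it. That part is therefore at least $-\log 4-\log(1+1/M)$ directly.
\end{itemize}
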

\begin{proof}
The comparison $H_\infty(t)\leq e^{-t/2}$ and
\cite[Propositions 2.7 and 2.12]{Chenevier2020} give
\begin{equation}\label{eq:critical-lower}
 t_n\geq\psi(1/2)+\sum_{j=1}^n\frac1j
 =\log n-\log4+o(1).
\end{equation}
Put $a_d=L+\psi((d+1)/2)$. The inequality $\psi(u)<\log u$ for $u>0$ implies
$a_d<\log(d+1)$. Indeed, a gamma-distributed variable of shape $u$ and rate one has mean $u$ and mean
logarithm $\psi(u)$, so strict concavity of the logarithm gives the inequality.
By positivity of all $v_{n,k}$,
\begin{equation}\label{eq:discrete-energy}
 t_n-\log n\leq
 \iint\log(|x-y|+1/n)\,\dd\xi_n(x)\,\dd\xi_n(y).
\end{equation}
By compactness, every subsequence of $(\xi_n)$ has a further weakly convergent
subsequence. Fix one with limit $\xi$. For fixed $\varepsilon>0$,
$1/n\leq\varepsilon$ eventually, and the kernel
$\log(|x-y|+\varepsilon)$ is continuous on $[0,1]^2$. Thus along this subsequence,
\[
 \limsup_{n\to\infty}(t_n-\log n)
 \leq\iint\log(|x-y|+\varepsilon)\,\dd\xi(x)\,\dd\xi(y).
\]
Letting $\varepsilon\downarrow0$ gives the upper bound $I(\xi)$, by monotone convergence
after subtracting from a fixed upper bound. Together with \eqref{eq:critical-lower} and
\cref{lem:energy}, this yields
\[
 -\log4\leq\liminf(t_n-\log n)
 \leq\limsup(t_n-\log n)\leq I(\xi)\leq-\log4.
\]
Equality forces $\xi$ to be the arcsine measure. Every subsequential limit is therefore
the same, proving \eqref{eq:arcsine-limit} and the first assertion in
\eqref{eq:critical-asymptotic}. Exponentiation gives the second.
\end{proof}

\Cref{prop:monotone,thm:arcsine} prove \cref{thm:intro-limits}.
The convergence in \eqref{eq:arcsine-limit} is weak convergence of measures; it does not
assert a uniform pointwise asymptotic for individual coordinates.

The boundary behaviour depends on the rationality of $v_n$.
Chenevier proved rationality in odd degree and noted its failure in positive even degree
\cite[Corollary 2.16 and the proof of Proposition 6.5]{Chenevier2020}.
The moment formula gives a short proof of the latter assertion.

\begin{proposition}\label{prop:rational}
All coordinates of $v_n$ are rational if and only if $n=0$ or $n$ is odd.
\end{proposition}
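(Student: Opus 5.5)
The plan is to read off rationality of $v_n$ from the Hankel data in \cref{prop:hankel-factorization}. The key arithmetic input is that $s_0=L$ and $s_k\in\Q$ for $k\geq1$ (\cref{prop:moments-transform}). We also use that $L=\log2$ is irrational, which follows since $e^{a}\notin\Q$ for rational $a\neq0$ (Hermite–Lindemann). By \eqref{eq:laurent-vector} and \eqref{eq:cheb-expansion}, the coordinates of $v_n$ are $\Q$-linear combinations of the coefficients of $P_n$, and conversely. So $v_n$ is rational if and only if $P_n\in\Q[x]$.

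\textbf{Odd degree and $n=0$.} The case $n=0$ is trivial, since $P_0=1$. For odd $n=2m+1$ we write $P_n(x)=x\,p(x^2)$ with $\deg p=m$. Orthogonality of $P_n$ against $x^{2j+1}$ for $0\leq j<m$ is then a linear system whose matrix entries are the moments $s_{i+j+1}$. This matrix is the Hankel matrix defining $O_m$, which is rational and nonsingular because $O_m>0$ for a positive measure. Imposing $P_n(1)=1$ then determines $p$ uniquely, with rational coefficients. Hence $v_n\in\Q^{n+1}$.

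\textbf{Even degree $n=2m\geq2$.} Here it suffices to exhibit one irrational coordinate, and I would use the extreme coordinate. Write $P_n(x)=\pi_n(x)/\pi_n(1)$, where $\pi_n(x)=Q_m(x^2)$ and $Q_m$ is the monic orthogonal polynomial for $\nu$, with moments $s_0,s_1,\dots$ up to the factor $L^{-1}$. Then $v_{n,0}=1/(2^{n}\pi_n(1))$, and the problem reduces to showing $Q_m(1)\notin\Q$. By the Christoffel/Heine determinant formula,
\[
 Q_m(1)=\frac{(-1)^m\det\bigl(s_{i+j+1}-s_{i+j}\bigr)_{0\leq i,j<m}}{E_m}.
\]
In both numerator and denominator, $s_0$ occurs only in the $(0,0)$ entry. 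Expanding along it gives
\[
 E_m=e+fL,\qquad \pm\,\text{numerator}=a+bL,
\]
with $a,b,e,f\in\Q$. The cofactors $f$ and $b$ are Hankel determinants of the positive measures $y\,\dd\nu$ and $y(1-y)\,\dd\nu$, so they are nonzero. Since $L$ is irrational, $Q_m(1)$ is rational only if $af=be$. Thus the claim reduces to showing $af-be\neq0$.

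\textbf{Main obstacle.} The crux is proving $af-be\neq0$. I would first try to identify $af-be$ via the Desnanot–Jacobi (Sylvester) identity. Both matrices come from bordering a common structure, and the difference of cross-products of an affine-in-$s_0$ determinant pair should factor as a product of Hankel minors, such as $O_{m-1}$ times the corresponding determinant for $(1-y)\,\dd\nu$. Each such minor is strictly positive because the underlying measures are positive with infinite support.

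If that identity proves awkward, I would use a fallback through the recurrence. Since $\ell_{n+1}=\ell_n/(1-c_n)$ and $\ell_n$ is rational for odd $n$, it suffices to show $c_{2m-1}\notin\Q$. By \eqref{eq:beta-even-odd}, $c_{2m-1}$ is an explicit ratio involving $E_m$ and the rational $O$'s. One would then argue by induction on $m$, starting from $c_1=\beta_1=1/(4L)\notin\Q$, and using that the pair products in \eqref{eq:beta-pair-product} are rational to propagate irrationality from $c_{2m-1}$ to $c_{2m+1}$.
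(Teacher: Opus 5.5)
Most of your reductions are correct. You show $v_n\in\Q^{n+1}$ iff $P_n\in\Q[x]$, that $v_{2m,0}=1/(4^mQ_m(1))$, the column-difference form of Heine's formula, and that $Q_m(1)\in\Q\Rightarrow af=be$. Your odd-degree argument via $O_m\neq0$ and Cramer's rule is a correct, self-contained replacement for the paper's citation of Chenevier's Corollary~2.16. There is a minor slip: the cofactors of $s_0$ are Hankel determinants of $y^2\,\dd\nu$ and $y^2(1-y)\,\dd\nu$, not of $y\,\dd\nu$ and $y(1-y)\,\dd\nu$, though nonvanishing is unaffected.

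The gap is that the even case, which is the whole ``only if'' direction, stops at $af-be\neq0$, and you leave this unproved. Your guessed factorization cannot be right as stated. The quantity $af-be$ is rational. By contrast, $O_{m-1}$ times any positive-order Hankel determinant for $(1-y)\,\dd\nu$ is irrational, since its corner entry is $s_0-s_1$ with a nonzero cofactor. For $m=1$ one finds $af-be=\pm s_1=\pm\frac14$.

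What Desnanot--Jacobi actually gives is as follows. Let $H$ be the $(m+1)\times(m+1)$ matrix with rows $(s_i,\dots,s_{i+m})$ for $0\leq i<m$, followed by the row $(1,\dots,1)$. Then $\det H=(-1)^m\det(s_{i+j+1}-s_{i+j})$, and you may take $a+bL=\det H$. Let $H'$ be $H$ with its first row and last column deleted. Then
\[
 f\det H=b\,E_m-O_m\det H',
 \qquad\text{hence}\qquad
 af-be=-O_m\det H'=-O_mO_{m-1}R_{m-1}(1),
\]
where $R_{m-1}$ is the monic orthogonal polynomial of degree $m-1$ for $y\,\dd\nu$. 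This is nonzero because the zeros of $R_{m-1}$ lie in $(0,1)$.

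Your fallback can also be completed, but as written it is missing the key step. Rationality of $\beta_{2m}\beta_{2m+1}=(1-c_{2m-1})c_{2m}(1-c_{2m})c_{2m+1}$ does not by itself carry irrationality from $c_{2m-1}$ to $c_{2m+1}$. You need $(1-c_{2m-1})(1-c_{2m})=\ell_{2m-1}/\ell_{2m+1}\in\Q$, which comes from the odd case, to get $c_{2m}c_{2m+1}\in\Q_{>0}$. You also need $c_{2m}\notin\Q$, which follows from $1-c_{2m}=\ell_{2m}/\ell_{2m+1}$ together with $\ell_{2m}\notin\Q$.

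Both detours are avoidable. The paper reduces to $P_{2m}\in\Q[x]$ exactly as you do, and then tests orthogonality against the constant polynomial. Writing $P_{2m}=\sum_jp_jx^{2j}$, one gets $p_0L+\sum_{j\geq1}p_js_j=0$. Here $p_0\neq0$, because the monic polynomial equals $(-1)^m\beta_1\beta_3\cdots\beta_{2m-1}$ at zero. Rational coefficients would then force $L\in\Q$.

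By insisting on one specific coordinate, you turned the problem into showing that a linear fractional expression in $L$ is nonconstant, and that is what required a determinant identity. Once the identity is supplied, your route does give a slightly stronger conclusion: the extreme coordinate $v_{2m,0}$ itself is irrational.
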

\begin{proof}
The positive assertions follow from $v_0=(1)$ and the cited corollary.
Suppose $n=2m\geq2$ and $v_n\in\Q^{n+1}$. By the Laurent--Chebyshev correspondence,
$P_{2m}(x)=\sum_{j=0}^m a_jx^{2j}$ has rational coefficients.
Its constant coefficient is nonzero: the corresponding monic polynomial has value
$(-1)^m\beta_1\beta_3\cdots\beta_{2m-1}\ne0$ at zero.
Orthogonality to the constant polynomial gives
\[
 a_0L+\sum_{j=1}^m a_js_j=0.
\]
Since each $s_j$ with $j\geq1$ is rational, this contradicts the irrationality of $\log2$.
\end{proof}

We use Chenevier's class of test functions: even real functions satisfying (TFa)--(TFc)
in \cite[Section 4.2]{Chenevier2020}. Explicitly, $F(x)e^{\varepsilon|x|}$ is integrable
and of bounded variation for some $\varepsilon>1/2$;
$(F(x)-F(0))/x$ is of bounded variation on $\R\setminus\{0\}$; and $F$ equals the
arithmetic mean of its one-sided limits at each point. An \emph{admissible} function
below is a nonzero nonnegative such $F$ satisfying POS, equivalently
\[
 \widehat G(u)\geq0\quad(u\in\R),\qquad G(x)=F(x)\cosh(x/2),
 \quad \widehat G(u)=\int_\R G(x)e^{-2\pi i xu}\,\dd x.
\]
We normalize it by $F(0)=1$, as allowed by \cite[Lemma 4.5]{Chenevier2020}.
For $H_F(t)=e^{-t/2}F(t)$ on $[0,\infty)$, put
\[
 \begin{aligned}
 \psi_{H_F}(s)&=\int_0^\infty
 \left(H_F(0)\frac{e^{-t}}t-\frac{H_F(t)e^{-st}}{1-e^{-t}}\right)\dd t,
 \qquad s\geq0,\\
 q_n^{H_F}(x)&=\sum_{i,j=0}^n x_ix_j\psi_{H_F}(|i-j|).
 \end{aligned}
\]
For every real $r>0$, define
\begin{equation}\label{eq:boundary-forms}
 Q_{F,r}(x)=\log(2\pi/r)\,\phi_n(x)^2-q_n^{H_F}(x),\qquad
 Q_{\infty,r}(x)=\log(2\pi/r)\,\phi_n(x)^2-q_n(x).
\end{equation}
The first is Chenevier's complex-place quadratic form with the $\log r$ dimension
term subtracted \cite[Lemma 3.4]{Chenevier2020}. Here $r$ is an arbitrary positive
real parameter, not necessarily a root discriminant.

\begin{theorem}[A common integral obstruction]\label{thm:boundary}
At $r=r(n)$ there is a single nonzero $x\in\mathbb Z_{\geq0}^{n+1}$ such that
\begin{equation}\label{eq:common-negative}
 Q_{F,r(n)}(x)<0\quad\hbox{for every normalized admissible }F
\end{equation}
if and only if $n=0$ or $n$ is odd. In those cases $x$ can be chosen strictly positive
and symmetric, with $x_k=x_{n-k}$.
For positive even $n$, each individual $F$ has a strictly positive symmetric integral
negative vector, but no common nonzero integral vector exists.
\end{theorem}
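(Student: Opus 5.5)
The plan rests on a comparison between $q_n^{H_F}$ and $q_n$. Chenevier's results (his Propositions 2.6, 2.7 and the POS analysis in Sections 2 and 4) give the inequality I want to import:
\[
q_n^{H_F}(x)\geq q_n(x)\quad\hbox{for all }x\in\R_{\geq0}^{n+1},
\]
with equality for all such $x$ exactly in the limiting case. This is the statement that $H_\infty$ is extremal among admissible $H_F$ normalized by $F(0)=1$. Granting it, we get $Q_{F,r}(x)\leq Q_{\infty,r}(x)$ on the nonnegative orthant. At $r=r(n)$ one has $\log(2\pi/r(n))=t_n$, so \eqref{eq:variational} gives
\[
Q_{\infty,r(n)}(x)=t_n\phi_n(x)^2-q_n(x)=-q_n\bigl(x-\phi_n(x)v_n\bigr)\geq0,
\]
with equality exactly on the ray $\R v_n$. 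Hence on the orthant $Q_{\infty,r(n)}\ge0$, and it vanishes only on $\R_{\geq0}v_n$.

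For $n=0$ or $n$ odd, \cref{prop:rational} makes $v_n$ rational. By \cref{thm:critical-vector-positive} it is strictly positive, and it is symmetric by \eqref{eq:r-v-relation}. Take $x$ to be a positive integer multiple of $v_n$. Then $Q_{\infty,r(n)}(x)=0$, and I need $Q_{F,r(n)}(x)<0$ for every admissible $F$. This requires strictness of the comparison, $q_n^{H_F}(x)>q_n(x)$ whenever $x$ has positive coordinates and $F$ is admissible. Since $F$ is admissible and $F\neq F_\infty$ pointwise (indeed $F_\infty$ itself fails integrability with $e^{\varepsilon|x|}$ for $\varepsilon>1/2$), the difference should be representable as an integral of a nonnegative, not identically zero, function against a positive kernel in $x$. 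Strict positivity of all coordinates then gives strictness. I expect extracting this strict inequality from Chenevier's argument to be the main obstacle. My first attempt would write $\psi_{H_F}(s)-\psi_{H_\infty}(s)=\int_0^\infty (H_\infty-H_F)(t)e^{-st}(1-e^{-t})^{-1}\dd t$ and show that $H_F\le H_\infty$ with strict inequality on a set of positive measure. By analogy with \cite[Prop.~2.7]{Chenevier2020}, this would follow from POS via $F(t)\le F(0)/\cosh(t/2)$. The quadratic form $\sum x_ix_je^{-|i-j|t}$ is then positive for $x\neq0$, which gives strictness.

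For positive even $n$, I handle individual $F$ and the common vector separately.
\begin{itemize}
\item \emph{Individual $F$.} Strictness at $v_n$ gives $Q_{F,r(n)}(v_n)<0$. By continuity this persists on a neighbourhood of $v_n$, which is an open cone. The cone meets symmetric positive integer vectors: take a rational approximation of the symmetric vector $v_n$, symmetrize it, and scale.
\item \emph{No common vector.} Suppose $x\in\mathbb Z_{\ge0}^{n+1}\setminus\{0\}$ satisfies $Q_{F,r(n)}(x)<0$ for all $F$. Let $F$ run through an admissible sequence $F_k\to F_\infty$, for instance $F_\infty$ truncated or damped by $e^{-|t|/k}$ in a way that keeps POS. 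Then $q^{H_{F_k}}_n(x)\to q_n(x)$, so $Q_{\infty,r(n)}(x)\le 0$, hence $=0$. This forces $x\in\R_{>0}v_n$. But $x$ is integral while $v_n$ is irrational by \cref{prop:rational}, a contradiction.
\end{itemize}
One subtlety remains: I must check that such an approximating family exists within the admissible class. I would take $F_k=F_\infty\cdot\kappa_k$ with $\kappa_k$ a Fej\'er-type positive-definite factor.
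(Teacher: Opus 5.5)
Your proposal is correct and follows essentially the same route as the paper. The shared steps are: the strict comparison $Q_{F,r}(x)<Q_{\infty,r}(x)$ for $x\neq0$, obtained from $F\le F_\infty$ (Chenevier's Lemma 4.5), from (TFa) ruling out $F=F_\infty$ almost everywhere, and from positive definiteness of the kernel $e^{-|i-j|t}$; positive semidefiniteness of $Q_{\infty,r(n)}$ with kernel $\R v_n$; the rationality dichotomy of \cref{prop:rational}; and rational approximation of $v_n$ for an individual $F$.

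The one difference is that you build the approximating family yourself, as $F_\infty\kappa_k$ with $\kappa_k(x)=(1-|x|/k)_+$ or $e^{-|x|/k}$, instead of citing the normalized Odlyzko family and Chenevier's Lemma 5.1. Your family does work. Since $G_k=\kappa_k$ is positive definite, POS holds, and (TFa)--(TFc) are easily checked. Convergence of $q_n^{H_{F_k}}(x)$ to $q_n(x)$ follows by dominated convergence in your kernel formula.
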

\begin{proof}
By \cite[Lemma 4.5]{Chenevier2020}, $F(t)\leq F_\infty(t)$. Hence
$\Delta=H_\infty-H_F\geq0$ and $\Delta(0)=0$.
Moreover, $\Delta$ is nonzero in $L^1$: equality almost everywhere with $F_\infty$
would contradict the exponential integrability required by (TFa).
Proposition 2.2 of \cite{Chenevier2020}, applied to $\Delta$, shows that
$q_n^\Delta$ is negative definite on the whole space. By linearity in $H$,
\begin{equation}\label{eq:strict-test-comparison}
 Q_{\infty,r}(x)-Q_{F,r}(x)=-q_n^\Delta(x)>0\qquad(x\ne0).
\end{equation}
At $r=r(n)$, \eqref{eq:variational} shows that $Q_{\infty,r(n)}$ is positive semidefinite
with kernel $\R v_n$. Thus $Q_{F,r(n)}(v_n)<0$ for every admissible $F$.
When $v_n$ is rational, a positive integral multiple gives the required common vector.

Conversely, if a nonzero integral vector lies on $\R v_n$, normalizing it by its nonzero
coordinate sum makes $v_n$ rational. If $v_n$ is not rational, every nonzero integral
vector $x$ therefore satisfies $Q_{\infty,r(n)}(x)>0$.
The normalized Odlyzko family $F_\lambda$ of \cite[Section 4.3]{Chenevier2020} satisfies
$Q_{F_\lambda,r(n)}(x)\to Q_{\infty,r(n)}(x)$ for each fixed $x$
\cite[Lemma 5.1]{Chenevier2020}. Thus $Q_{F_\lambda,r(n)}(x)>0$ for large $\lambda$, excluding
\eqref{eq:common-negative}.
For an individual $F$, strict negativity at $v_n$, continuity, and $v_n>0$ give a strictly
positive symmetric rational approximation still having negative value. Clearing
its denominators gives an integral vector. Apply \cref{prop:rational} to finish.
\end{proof}

For any $r>r(n)$, \eqref{eq:variational} gives
$Q_{\infty,r}(v_n)=\log(r(n)/r)<0$. Rational approximation and
\eqref{eq:strict-test-comparison} then provide a common strictly positive symmetric
integral negative vector in every degree. Chenevier's corresponding effective-cone
statement assumes $r\geq1$ \cite[Proposition 6.5]{Chenevier2020}; strict positivity permits
this quadratic-form statement for arbitrary $r>r(n)>0$.

\section{Linear response and recurrence asymptotics}\label{sec:response}

The circle density in \eqref{eq:circle-density}, up to a positive constant, is
\[
  W(\theta)=(\pi-|\theta|)\tan\frac{|\theta|}{2}.
\]
It factors as
\begin{equation}\label{eq:FH-factorization}
  W(\theta)=|1-e^{i\theta}|e^{V(\theta)},
\end{equation}
where
\begin{equation}\label{eq:V-def}
  V(\theta)=\log\frac{\pi-|\theta|}{2\cos(\theta/2)},
  \qquad -\pi<\theta<\pi.
\end{equation}
The first factor is a Fisher--Hartwig singularity with root exponent $1/2$, and
\begin{equation}\label{eq:V-cusp}
  V(\theta)=V(0)-\frac{|\theta|}{\pi}+O(\theta^2)
  \qquad(\theta\to0).
\end{equation}
Thus the background is continuous but not differentiable at the singular point.

Write $V(\theta)=V_0+2\sum_{k\geq1}V_k\cos(k\theta)$, where for $k\geq0$ we set
\begin{equation}\label{eq:Vk-def}
  V_k=\frac1\pi\int_0^\pi V(\theta)\cos(k\theta)\,\dd\theta.
\end{equation}

\begin{theorem}\label{thm:fourier-V}
For every $k\geq1$,
\begin{equation}\label{eq:Vk-Si}
  V_k=\frac{(-1)^{k+1}}{\pi k}
  \left(\Si(k\pi)-\frac\pi2\right)>0.
\end{equation}
For every fixed $M\geq1$,
\begin{equation}\label{eq:Vk-asymptotic-full}
  V_k=
  \sum_{m=0}^{M-1}
  \frac{(-1)^m(2m)!}{\pi^{2m+2}k^{2m+2}}
  +O_M(k^{-2M-2}).
\end{equation}
In particular,
\begin{equation}\label{eq:Vk-first}
  V_k=\frac1{\pi^2k^2}-\frac2{\pi^4k^4}
  +\frac{24}{\pi^6k^6}+O(k^{-8}).
\end{equation}
\end{theorem}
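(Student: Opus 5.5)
The plan is to compute $V_k$ directly from \eqref{eq:V-def} by splitting
\[
 V(\theta)=\log(\pi-|\theta|)-\log\bigl(2\cos(\theta/2)\bigr),
\]
and treating the two pieces separately. For the second piece I will use the classical expansion $\log(2\cos(\theta/2))=\sum_{k\geq1}(-1)^{k+1}\cos(k\theta)/k$ on $(-\pi,\pi)$. It converges in $L^2$, so its cosine coefficients may be read off termwise. Since $\frac1\pi\int_0^\pi\cos^2(k\theta)\,\dd\theta=\frac12$, its contribution to $V_k$ is $-(-1)^{k+1}/(2k)$.

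For the first piece I will integrate by parts with $\log(\pi-\theta)$ and $\sin(k\theta)/k$. The boundary term vanishes at $\theta=0$ because $\sin 0=0$. It also vanishes as $\theta\to\pi$, because $\log(\pi-\theta)\sin(k\theta)=O\bigl((\pi-\theta)\log(\pi-\theta)\bigr)\to0$. This leaves
\[
 \frac1{\pi k}\int_0^\pi\frac{\sin(k\theta)}{\pi-\theta}\,\dd\theta .
\]
Substituting $u=\pi-\theta$ and using $\sin(k\pi-ku)=(-1)^{k+1}\sin(ku)$, then $t=ku$, turns this into $(-1)^{k+1}\Si(k\pi)/(\pi k)$. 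Adding the two contributions gives the closed form in \eqref{eq:Vk-Si}.

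For the sign I will rewrite the closed form using $\Si(x)-\pi/2=-\int_x^\infty(\sin t/t)\,\dd t$. I will then split the tail integral into the lobes $[j\pi,(j+1)\pi]$ for $j\geq k$. The lobe integrals alternate in sign, the first one having sign $(-1)^k$, and their absolute values strictly decrease to $0$, since $1/t$ is decreasing. By the alternating series estimate, the tail therefore has sign exactly $(-1)^k$, strictly. Hence $\Si(k\pi)-\pi/2$ has sign $(-1)^{k+1}$, and multiplying by $(-1)^{k+1}/(\pi k)$ gives $V_k>0$.

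For \eqref{eq:Vk-asymptotic-full} I will take $x=k\pi$ and integrate $\int_x^\infty e^{it}t^{-1}\,\dd t$ by parts $2M$ times. This gives
\[
 \int_x^\infty\frac{e^{it}}t\,\dd t=ie^{ix}\sum_{j=0}^{2M-1}\frac{j!}{(ix)^{j+1}}+R_M,
 \qquad |R_M|\leq\frac{(2M)!}{x^{2M+1}},
\]
where the remainder bound comes from estimating $\int_x^\infty(2M)!\,t^{-2M-1}\,\dd t$ trivially. At $x=k\pi$ we have $e^{ix}=(-1)^k$, which is real. Taking imaginary parts, only the even $j=2m$ terms survive:
\[
 \int_{k\pi}^\infty\frac{\sin t}{t}\,\dd t=(-1)^k\sum_{m=0}^{M-1}\frac{(-1)^m(2m)!}{(k\pi)^{2m+1}}+O_M(k^{-2M-1}).
\]
Multiplying by $-(-1)^{k+1}/(\pi k)=(-1)^k/(\pi k)$ yields \eqref{eq:Vk-asymptotic-full}, and \eqref{eq:Vk-first} is the case $M=4$ with coefficients $1,-2,24$. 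The expansion with the odd-$j$ terms removed is uniform in $k$, so the $O_M$ constant is absolute.

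The only delicate point is this bookkeeping of signs and parity in the asymptotic step, namely checking that the odd-$j$ terms drop out exactly because $\sin(k\pi)=0$. The strict positivity via alternating lobes is routine.
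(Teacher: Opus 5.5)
Your proposal is correct. For the closed form it is the paper's argument: you split $V$ the same way, integrate by parts to get $(-1)^{k+1}\Si(k\pi)/k$, and use the same classical Fourier series for $\log(2\cos(\theta/2))$. For \eqref{eq:Vk-asymptotic-full} the paper simply quotes the DLMF asymptotic series of $\Si$ at multiples of $\pi$; your repeated integration by parts rederives it.

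The genuine difference is the positivity argument. You sign the tail $\int_{k\pi}^\infty(\sin t)/t\,\dd t$ by alternating lobes, which is elementary. The paper instead substitutes $t=k(\pi+v)$ and writes $1/(\pi+v)=\int_0^\infty e^{-(\pi+v)s}\,\dd s$. It justifies Fubini with an Abel factor and obtains
\[
 kV_k=\frac1\pi\int_0^\infty e^{-\pi s}\frac{k}{k^2+s^2}\,\dd s ,
\]
which is manifestly positive. This representation does more work. Summed over $k$, it gives the digamma formula of \cref{prop:SN-digamma}. Expanding $k/(k^2+s^2)$ as a finite geometric series in $-s^2/k^2$ also yields \eqref{eq:Vk-asymptotic-full} directly, with remainder bounded by the first omitted term and no parity bookkeeping. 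Your lobe argument proves the theorem but does not supply this identity used later.

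The asymptotic step, which you single out as the delicate point, contains two slips; both are repairable.

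First, your displayed identity is off by a factor $-i$. It should read
\[
 \int_x^\infty\frac{e^{it}}t\,\dd t=-e^{ix}\sum_{j=0}^{2M-1}\frac{j!}{(ix)^{j+1}}+R_M ,
\]
whose leading term $ie^{ix}/x$ has imaginary part $\cos x/x$. As you wrote it, taking imaginary parts with $e^{ix}=(-1)^k$ would retain the odd $j$ and kill the even ones. Your following display is nevertheless the correct one.

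Second, the trivial estimate $(2M)!\int_x^\infty t^{-2M-1}\,\dd t$ equals $(2M-1)!/x^{2M}$, not $(2M)!/x^{2M+1}$. As justified, it therefore only gives $O_M(k^{-2M-1})$ for $V_k$. Your bound is nonetheless true: rotating to $t=x+is$ gives $\bigl|\int_x^\infty e^{it}t^{-n}\,\dd t\bigr|\leq x^{-n}$. Alternatively, apply your argument with $M+1$ and absorb the $m=M$ term.

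Finally, \eqref{eq:Vk-first} is the case $M=3$. It also follows from $M=4$ after absorbing the $m=3$ term.
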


\begin{proof}
The additive constant in $V$ does not contribute. Substituting $u=\pi-\theta$ and
integrating by parts gives
\[
  \int_0^\pi\log(\pi-\theta)\cos(k\theta)\,\dd\theta
  =\frac{(-1)^{k+1}}{k}\Si(k\pi).
\]
The Fourier series
\[
  \log\left(2\cos\frac\theta2\right)
  =\sum_{m\geq1}\frac{(-1)^{m+1}}m\cos(m\theta)
\]
gives the remaining term in \eqref{eq:Vk-Si}. The asymptotic series for $\Si$ at integer multiples of $\pi$
\cite[Section 6.12(ii)]{DLMF} gives \eqref{eq:Vk-asymptotic-full}. Finally, the tail representation of the sine integral gives
\begin{equation}\label{eq:kVk-integral}
  kV_k=\frac1\pi\int_0^\infty\frac{\sin(kv)}{\pi+v}\,\dd v
  =\frac1\pi\int_0^\infty e^{-\pi t}\frac{k}{k^2+t^2}\,\dd t>0,
\end{equation}
where the first integral is an improper Dirichlet integral. To justify the second equality,
insert an Abel factor $e^{-\varepsilon v}$ and use
\[
  \frac1{\pi+v}=\int_0^\infty e^{-(\pi+v)t}\,\dd t.
\]
Then apply Fubini for $\varepsilon>0$. The resulting integrand is
$e^{-\pi t}k/(k^2+(t+\varepsilon)^2)$; dominated convergence as
$\varepsilon\downarrow0$ gives \eqref{eq:kVk-integral} and proves positivity.
\end{proof}

We first compute the response to a continuous even real-valued perturbation $B$ on
$[-1,1]$. For $t\in\R$, set
\begin{equation}\label{eq:eta-general}
 \dd\eta_t^B(x)=\frac{e^{tB(x)}}{\int_{-1}^1e^{tB(u)}\,\dd u}\,\dd x.
\end{equation}
Use the circle lift of \cref{sec:positivity}, denote its Verblunsky coefficients by
$\alpha_j^B(t)$, and set
\begin{equation}\label{eq:response-functional}
 \mathscr R_n[B]=\left.\frac{\mathrm d}{\mathrm dt}\alpha_{n-1}^B(t)\right|_{t=0},
 \qquad n\geq1.
\end{equation}
For $B=B_*$ we write $\alpha_j(t)=\alpha_j^{B_*}(t)$, as in the introduction;
the unadorned $\alpha_j$ always denotes $\alpha_j(1)$.
At $t=0$ the real-line polynomials are the Legendre polynomials $\mathsf P_n(1)=1$.
Since $c_n^{(0)}=n/(2n+1)$, \eqref{eq:alpha-c} gives
\begin{equation}\label{eq:alpha-legendre}
 \alpha_{n-1}^B(0)=-\frac1{2n+1}.
\end{equation}
We use the Legendre operator and the arcsine average
\begin{equation}\label{eq:Legendre-operator}
 \mathcal Lq=-\frac{\mathrm d}{\mathrm dx}\bigl((1-x^2)q'(x)\bigr),\qquad
 \mathcal A(q)=\frac1\pi\int_{-1}^1\frac{q(x)}{\sqrt{1-x^2}}\,\dd x.
\end{equation}

\begin{theorem}[Legendre first variation]\label{thm:linear-response}
For every continuous even real-valued $B$ and every $n\geq1$,
\begin{equation}\label{eq:general-response}
 \mathscr R_n[B]=\frac1{2(2n+1)}\int_{-1}^1
 B(x)\mathcal L\bigl(\mathsf P_n(x)^2\bigr)\,\dd x.
\end{equation}
If, in addition, $B\in C^2((-1,1))$, $\mathcal LB\in L^1(-1,1)$, and
$(1-x^2)B'(x)\to0$ at both endpoints, then
\begin{equation}\label{eq:general-response-LB}
 \mathscr R_n[B]=\frac1{2(2n+1)}\int_{-1}^1
 \mathsf P_n(x)^2\mathcal LB(x)\,\dd x.
\end{equation}
\end{theorem}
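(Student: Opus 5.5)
The plan is to compute the first variation through the real-line recurrence and then convert the result via the Geronimus relation \eqref{eq:alpha-c}. Since $\alpha_{n-1}^B(t)=2c_n(t)-1$ and $1-c_n=\ell_n/\ell_{n+1}=\pi_{n+1}(1)/\pi_n(1)$ (because $P_n=\pi_n/\pi_n(1)$), we get
\[
\mathscr R_n[B]=-2\,\frac{\mathrm d}{\mathrm dt}\Bigl(\frac{\pi_{n+1}(1)}{\pi_n(1)}\Bigr)\Big|_{t=0}
=-2\,\frac{\pi_{n+1}(1)}{\pi_n(1)}\Bigl(\frac{\delta\pi_{n+1}(1)}{\pi_{n+1}(1)}-\frac{\delta\pi_n(1)}{\pi_n(1)}\Bigr).
\]
Here $\delta$ denotes the $t$-derivative at $t=0$, and $\pi_n$ are the monic polynomials for $\eta_t^B$. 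Differentiability in $t$ is clear, because the moments of $\eta^B_t$ are smooth in $t$ and the Hankel determinants are positive. The weight perturbation is $\delta w=\tfrac12\bigl(B-\tfrac12\int_{-1}^1B\bigr)$ relative to $\dd x$.

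\textbf{First step: variation of monic polynomials.} Differentiating the orthogonality relations $\int\pi_n\pi_k\,w\,\dd x=0$ for $k<n$ gives $\delta\pi_n=-\sum_{k<n}\frac{\int\pi_n\pi_k\,\delta w}{h_k}\pi_k$, since $\delta\pi_n$ has degree $<n$. Evaluating at $x=1$ yields
\[
\delta\pi_n(1)=-\int_{-1}^1\pi_n(x)K_{n-1}(x,1)\,\delta w(x)\,\dd x,
\]
where $K_{n-1}$ is the Christoffel--Darboux kernel of the Legendre measure. In Legendre normalization $K_{n-1}(x,1)$ is proportional to $\sum_{k<n}(2k+1)\mathsf P_k(x)=\mathsf P_n'(x)+\mathsf P_{n-1}'(x)$. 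Hence
\[
\frac{\delta\pi_n(1)}{\pi_n(1)}=-c\int \mathsf P_n(\mathsf P_n'+\mathsf P_{n-1}')\,\delta w
\]
for an explicit constant $c$, and similarly for $n+1$.

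\textbf{Main obstacle: the algebraic identity.} I need to show that
\[
\mathsf P_{n+1}(\mathsf P_{n+1}'+\mathsf P_n')-\mathsf P_n(\mathsf P_n'+\mathsf P_{n-1}')
\]
equals a constant multiple of $\mathcal L(\mathsf P_n^2)$, modulo a multiple of $\mathsf P_n'$-type terms that integrate to zero against constants. I expect this to be the hardest step. I would first try the Legendre relations $(1-x^2)\mathsf P_n'=n(\mathsf P_{n-1}-x\mathsf P_n)$ and $\mathsf P_{n+1}'-\mathsf P_{n-1}'=(2n+1)\mathsf P_n$, together with the three-term recurrence, to eliminate $\mathsf P_{n\pm1}$. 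I would then compare with
\[
\mathcal L(\mathsf P_n^2)=-2(1-x^2)\mathsf P_n'^2+2n(n+1)\mathsf P_n^2,
\]
where the right-hand side uses $\mathcal L\mathsf P_n=n(n+1)\mathsf P_n$. Once the identity holds up to an additive constant, the additive constant must vanish by comparing integrals over $[-1,1]$. The normalizing factor $1/(2(2n+1))$ then comes out of $\pi_{n+1}(1)/\pi_n(1)=(n+1)/(2n+1)$ and the kernel constants. As a check, I would test the identity on $B=x^2$ at $n=1$.

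\textbf{Removing the normalization constant.} The contribution of the constant $\tfrac12\int B$ in $\delta w$ drops out, because
\[
\int_{-1}^1\mathcal L(\mathsf P_n^2)\,\dd x=-\bigl[(1-x^2)(\mathsf P_n^2)'\bigr]_{-1}^{1}=0.
\]
This gives \eqref{eq:general-response}.

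\textbf{Second formula.} For \eqref{eq:general-response-LB}, I integrate by parts twice on $[-1+\varepsilon,1-\varepsilon]$. The boundary terms are $B\,(1-x^2)(\mathsf P_n^2)'$ and $\mathsf P_n^2\,(1-x^2)B'$. The first tends to zero because $B$ is continuous and $(1-x^2)$ vanishes at the endpoints. The second tends to zero by the hypothesis $(1-x^2)B'\to0$. Finally, $\mathcal LB\in L^1$ justifies passing to the limit $\varepsilon\to0$ by dominated convergence.
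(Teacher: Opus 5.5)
Your skeleton is the paper's. The Geronimus relation reduces $\mathscr R_n[B]$ to $\frac{2(n+1)}{2n+1}(U_{n+1}-U_n)$ with $U_m=-\delta\pi_m(1)/\pi_m(1)$. First-order perturbation expresses $U_m$ through the Legendre kernel at $1$, and Green's identity gives the second formula. Your only deviation is writing the kernel as $\mathsf P_n'+\mathsf P_{n-1}'$ instead of using Christoffel--Darboux. However, the step that actually constitutes the proof is left as a plan, and the criterion you state for it is wrong. The formula must hold for every even continuous $B$. So it is not enough that the discrepancy between $Q_n=\mathsf P_{n+1}(\mathsf P_{n+1}'+\mathsf P_n')-\mathsf P_n(\mathsf P_n'+\mathsf P_{n-1}')$ and a multiple of $\mathcal L(\mathsf P_n^2)$ ``integrates to zero against constants''. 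You need the even part of $Q_n$ to equal $\mathcal L(\mathsf P_n^2)/(2(n+1))$ pointwise. The missing idea is parity: $\mathsf P_{n+1}\mathsf P_{n+1}'$ and $\mathsf P_n\mathsf P_n'$ are odd and drop out against even $B$. Hence only $\operatorname{Even}Q_n=\mathsf P_{n+1}\mathsf P_n'-\mathsf P_n\mathsf P_{n-1}'$ matters.

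With that, the identity closes in one line using the relations you listed. The recurrence together with $(1-x^2)\mathsf P_n'=n(\mathsf P_{n-1}-x\mathsf P_n)$ gives $\mathsf P_{n+1}=x\mathsf P_n-(1-x^2)\mathsf P_n'/(n+1)$. Differentiating the latter relation and using the Legendre equation gives $\mathsf P_{n-1}'=x\mathsf P_n'-n\mathsf P_n$. Hence
\[
 \mathsf P_{n+1}\mathsf P_n'-\mathsf P_n\mathsf P_{n-1}'
 =n\mathsf P_n^2-\frac{(1-x^2)(\mathsf P_n')^2}{n+1}
 =\frac{\mathcal L(\mathsf P_n^2)}{2(n+1)},
\]
exactly, with no additive constant. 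Your constant $c$ equals $1$, so $\mathscr R_n[B]=\frac{n+1}{2n+1}\int_{-1}^1 B\,Q_n\,\dd x$; the constant part of $\delta w$ is killed by $\int Q_n\,\dd x=0$, which is just orthogonality. Substituting the identity yields \eqref{eq:general-response}. Once filled in this way, your derivative form of the kernel is actually shorter than the paper's route, which must extract the even part of $(E+O)/(x-1)$ from the Christoffel--Darboux quotient. The rest of your argument, including the endpoint analysis for \eqref{eq:general-response-LB}, is correct.
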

\begin{proof}
Let $P_m^B(x;t)$ be the orthogonal polynomials for \eqref{eq:eta-general}, normalized by
$P_m^B(1;t)=1$, and let $\ell_m^B(t)$ be their leading coefficients.
The finite Gram matrices have smooth entries in $t$ and are positive definite, so these
polynomials and their coefficients depend smoothly on $t$. Write $c_m^B(t)$ for the
endpoint recurrence parameters and put
\[
 U_m=\left.\frac{\mathrm d}{\mathrm dt}\log\ell_m^B(t)\right|_{t=0}.
\]
As $1-c_m^B(t)=\ell_m^B(t)/\ell_{m+1}^B(t)$, \eqref{eq:alpha-c} yields
\begin{equation}\label{eq:response-leading}
 \mathscr R_n[B]=\frac{2(n+1)}{2n+1}(U_{n+1}-U_n).
\end{equation}
Write a dot for differentiation at $t=0$. The expansion
\[
 \dot P_m^B=U_m\mathsf P_m+\sum_{r=0}^{m-1}b_{m,r}\mathsf P_r
\]
satisfies $U_m+\sum_{r<m}b_{m,r}=0$, by endpoint normalization.
Differentiate orthogonality against $\mathsf P_r$, using
$\int_{-1}^1\mathsf P_r^2\,\dd x/2=(2r+1)^{-1}$. The constant arising from normalization
of the measure contributes zero, and
\[
 b_{m,r}=-\frac{2r+1}{2}\int_{-1}^1 B(x)\mathsf P_m(x)\mathsf P_r(x)\,\dd x.
\]
Consequently,
\begin{equation}\label{eq:leading-kernel}
 U_m=\frac12\int_{-1}^1 B(x)\mathsf P_m(x)K_{m-1}(1,x)\,\dd x,
 \qquad K_m(1,x)=\sum_{r=0}^m(2r+1)\mathsf P_r(x).
\end{equation}
Combining \eqref{eq:response-leading} and \eqref{eq:leading-kernel} expresses the response
as the integral of $B$ against
\[
 \frac{n+1}{2n+1}
   \bigl(\mathsf P_{n+1}K_n(1,\cdot)-\mathsf P_nK_{n-1}(1,\cdot)\bigr).
\]
Only the even part contributes.
Christoffel--Darboux gives
\[
 K_m(1,x)=\frac{(m+1)(\mathsf P_{m+1}(x)-\mathsf P_m(x))}{x-1}.
\]
Set $Y_n=(1-x^2)\mathsf P_n'$,
$E=(n+1)\mathsf P_{n+1}^2-n\mathsf P_n^2$, and
$O=n\mathsf P_n\mathsf P_{n-1}-(n+1)\mathsf P_{n+1}\mathsf P_n$.
Then $E$ is even, $O$ is odd, and
\[
 \operatorname{Even}\!\left(\frac{E+O}{x-1}\right)=\frac{E+xO}{x^2-1},
\]
where $\operatorname{Even}q(x)=(q(x)+q(-x))/2$. The identities
\[
 \mathsf P_{n-1}=x\mathsf P_n+\frac{Y_n}{n},\qquad
 \mathsf P_{n+1}=x\mathsf P_n-\frac{Y_n}{n+1}
\]
give
\[
 E+xO=-n(1-x^2)\mathsf P_n^2+\frac{Y_n^2}{n+1}.
\]
Thus the required even part is
\[
 \frac{n(n+1)\mathsf P_n^2-(1-x^2)(\mathsf P_n')^2}{2n+1}
 =\frac1{2(2n+1)}\mathcal L(\mathsf P_n^2),
\]
which proves \eqref{eq:general-response}.
Under the additional assumptions, Green's identity on compact subintervals can be
passed to the endpoints. The terms
$(1-x^2)B'(x)\mathsf P_n(x)^2$ and
$(1-x^2)B(x)(\mathsf P_n(x)^2)'$ both vanish there, and
$\mathcal LB$ is integrable. This proves \eqref{eq:general-response-LB}.
\end{proof}

The endpoint contribution can be evaluated through two Legendre averages. For $j,n\geq0$, put
\begin{equation}\label{eq:omega-kappa}
 \omega_j=\frac{\binom{2j}{j}^2}{16^j},\qquad
 \kappa_n=\sum_{j=0}^n \omega_j\omega_{n-j}.
\end{equation}

\begin{lemma}\label{lem:Legendre-averages}
For every $n\geq0$, $\mathcal A(\mathsf P_n^2)=\kappa_n$, and, as $n\to\infty$,
\begin{equation}\label{eq:kappa-asymptotic}
 \kappa_n=\frac2{\pi^2n}\bigl(\log n+\gamma+\log16\bigr)
       +O\left(\frac{\log n}{n^2}\right).
\end{equation}
For every $q\in C([-1,1])$,
\begin{equation}\label{eq:Legendre-weak}
 n\int_{-1}^1\mathsf P_n(x)^2q(x)\,\dd x\longrightarrow\mathcal A(q).
\end{equation}
\end{lemma}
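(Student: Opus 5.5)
Three claims need proof: the exact identity $\mathcal A(\mathsf P_n^2)=\kappa_n$, the asymptotic \eqref{eq:kappa-asymptotic}, and the weak limit \eqref{eq:Legendre-weak}. I would use the Laplace--Dirichlet trigonometric expansion of Legendre polynomials throughout, writing
\[
 \mathsf P_n(\cos\theta)=\sum_{j=0}^n\sqrt{\omega_j\omega_{n-j}}\,e^{i(n-2j)\theta},
 \qquad \sqrt{\omega_j}=\binom{2j}{j}4^{-j}.
\]

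\textbf{The exact identity.} With $x=\cos\theta$, $\mathcal A(\mathsf P_n^2)$ is the mean of $|\mathsf P_n(\cos\theta)|^2$ over $\theta\in[0,2\pi)$. The expansion has real, nonnegative coefficients, and its frequencies $n-2j$ are distinct. Parseval therefore gives
\[
 \mathcal A(\mathsf P_n^2)=\sum_j\omega_j\omega_{n-j}=\kappa_n .
\]
Equivalently, $\sum_n\kappa_nz^n=\bigl(\sum_j\omega_jz^j\bigr)^2=\bigl(\tfrac{2}{\pi}K(\sqrt z)\bigr)^2$, with $K$ the complete elliptic integral.

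\textbf{The asymptotic.} Stirling gives $\omega_j=\frac{1}{\pi j}\bigl(1-\frac{1}{4j}+O(j^{-2})\bigr)$. I would split the convolution symmetrically at $j=n/2$ and treat it as a discrete Mellin-type convolution. The cleanest route is singularity analysis of $\bigl(\tfrac2\pi K(\sqrt z)\bigr)^2$ at $z=1$. There
\[
 \tfrac2\pi K(\sqrt z)=\tfrac1\pi\log\tfrac{16}{1-z}+O\bigl((1-z)\log(1-z)\bigr).
\]
Squaring gives $\pi^{-2}\bigl(\log^2\tfrac{1}{1-z}+2\log16\,\log\tfrac1{1-z}+\dots\bigr)$. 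Transfer theorems (Flajolet--Odlyzko) then apply, because $K(\sqrt z)$ is $\Delta$-analytic. Using $[z^n]\log^2\frac1{1-z}=\frac{2}{n}(\log n+\gamma)+O(\log n/n^2)$ and $[z^n]\log\frac1{1-z}=1/n$, I get $\frac{2}{\pi^2 n}(\log n+\gamma+\log 16)$. The error terms $O((1-z)\log^2(1-z))$ transfer to $O(\log n/n^2)$.

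Getting the constant $\gamma+\log16$ exactly right, together with the error order, is the delicate step. If the transfer machinery is deemed too heavy, an elementary fallback is available. Write $\omega_j=\frac{1}{\pi(j+1/2)}+O(j^{-3})$ (since $\omega_j\sim\frac{1}{\pi j}(1-\frac1{4j})$), and compare $\kappa_n$ with $\pi^{-2}\sum_j\frac{1}{(j+\frac12)(n-j+\frac12)}$. That sum equals $\frac{2}{n+1}\sum_j\frac{1}{j+1/2}$, which gives $\frac{2}{n}\bigl(\log n+\gamma+2\log2\bigr)$. The corrections from $j=0$ and small $j$ must then account for the additional $2\log2$, and this requires exact care with the $\omega_0=1$ term. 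For that reason I would prefer the generating-function route.

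\textbf{The weak limit.} For \eqref{eq:Legendre-weak}, I would first take $q$ a polynomial, or use density. The classical Laplace--Heine asymptotic $\mathsf P_n(\cos\theta)\approx\sqrt{2/(\pi n\sin\theta)}\cos((n+\tfrac12)\theta-\tfrac\pi4)$ holds uniformly on $[\delta,\pi-\delta]$. It gives
\[
 n\mathsf P_n^2\,\dd x\approx\frac{2}{\pi}\cos^2(\cdots)\,\dd\theta ,
\]
whose oscillating part averages out by Riemann--Lebesgue, leaving $\frac{1}{\pi}\dd\theta$, which is $\mathcal A$. Near the endpoints, the total mass is controlled by the bound $|\mathsf P_n(\cos\theta)|\le\min\bigl(1,\sqrt{2/(\pi n\sin\theta)}\bigr)$ (Bernstein). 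This bound makes the contribution of $\theta<\delta$ equal to $O(\delta)$ after multiplying by $n$: the region $\theta<1/n$ contributes $O(1/n)$, and $\int_{1/n}^{\delta}\frac{2}{\pi n\theta}\,n\theta\,\dd\theta=O(\delta)$. Since the masses $n\int\mathsf P_n^2\,\dd x=\frac{2n}{2n+1}$ are uniformly bounded, passing from this dense class to all continuous $q$ is immediate.
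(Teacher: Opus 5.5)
Your proof of $\mathcal A(\mathsf P_n^2)=\kappa_n$ is the paper's: Parseval applied to the Laplace--Dirichlet expansion. The other two parts take genuinely different routes.

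For the asymptotic, the paper writes $\omega_j=u_j+\varepsilon_j$ with $u_j=1/(\pi j)$. It gets $\sum_j\varepsilon_j=(\log16)/\pi$ from the radial limit of $\frac2\pi K(\sqrt z)$ as $z\uparrow1$ together with absolute summability. It then bounds $u*u$, $u*\varepsilon$ and $\varepsilon*\varepsilon$ by an elementary split at $n/2$. You instead transfer the singular expansion of $\bigl(\frac2\pi K(\sqrt z)\bigr)^2$. Both use the same input, the logarithmic behaviour of $K$ at modulus one. The paper needs it only along $(0,1)$. Your route needs $\Delta$-analyticity, which holds for ${}_2F_1(\frac12,\frac12;1;z)$, and in return it yields a full expansion mechanically.

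One bookkeeping correction: a bare error $O\bigl((1-z)\log^2\frac1{1-z}\bigr)$ transfers only to $O(n^{-2}\log^2n)$, not to the stated $O(n^{-2}\log n)$. To reach the stated bound, keep the explicit $(1-z)\log^k\frac1{1-z}$ terms from the convergent logarithmic expansion of ${}_2F_1$ at $z=1$. For example, $[z^n](1-z)\log^2\frac1{1-z}=2(1-H_{n-2})/(n(n-1))$; the $\log^2n$ term cancels because $1-z$ is an integer power. Then transfer only the $O\bigl((1-z)^2\log^2\frac1{1-z}\bigr)$ remainder.

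In your fallback, $\omega_j-\frac1{\pi(j+1/2)}$ is $O(j^{-2})$, not $O(j^{-3})$. The expansion $\frac1{\pi j}(1-\frac1{4j})$ matches the shift $j+\frac14$, not $j+\frac12$. The $O(j^{-2})$ error is still summable. However, its sum $(2\log2)/\pi$ must again come from the generating function at $z=1$. So the corrected fallback is the paper's argument with a shifted comparison sequence.

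For the weak limit, the paper uses no pointwise asymptotics. It needs only $a_n^{(0)}\to\frac12$ in the orthonormal recurrence. Counting balanced paths gives the arcsine moments for $q=x^m$, and the bound $n\int\mathsf P_n^2\,\dd x\le1$ extends the limit to $C([-1,1])$. Your argument is also correct: Laplace--Heine on $[\delta,\pi-\delta]$, Bernstein's inequality $\sqrt{\sin\theta}\,|\mathsf P_n(\cos\theta)|\le\sqrt{2/(\pi n)}$ near the endpoints, and Riemann--Lebesgue. It treats continuous $q$ directly, so your density step is superfluous. It does, however, import two nontrivial classical estimates. The paper's argument is self-contained. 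It would apply verbatim to the orthonormal polynomials of any even measure whose recurrence coefficients tend to $\frac12$.
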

\begin{proof}
The Legendre generating function gives
\[
 e^{in\theta}\mathsf P_n(\cos\theta)
 =\sum_{j=0}^n\frac{\binom{2j}{j}\binom{2n-2j}{n-j}}{4^n}e^{2ij\theta}.
\]
Parseval's identity on $[0,\pi]$ proves $\mathcal A(\mathsf P_n^2)=\kappa_n$.
To evaluate the convolution, put $u_0=0$, $u_j=1/(\pi j)$ for $j\geq1$, and
$\varepsilon_j=\omega_j-u_j$. Stirling's formula gives $\varepsilon_j=O((j+1)^{-2})$.
For the complete elliptic integral with modulus $k$,
\[
 K(k)=\int_0^{\pi/2}\frac{\dd\theta}{\sqrt{1-k^2\sin^2\theta}},
\]
its Taylor series and the expansion at $k=1$ give
\[
 \sum_{j\geq0}\omega_jz^j=\frac2\pi K(\sqrt z)
 =\frac1\pi\log\frac{16}{1-z}+o(1)\qquad(z\uparrow1);
\]
see \cite[Section 19.12]{DLMF}. Subtraction of
$\sum u_jz^j=-\pi^{-1}\log(1-z)$ and absolute summability imply
$\sum_{j\geq0}\varepsilon_j=(\log16)/\pi$.
For the convolution $(u*v)_n=\sum_{j=0}^n u_jv_{n-j}$, one has
\[
 (u*u)_n=\frac{2H_{n-1}}{\pi^2n},\quad
 (u*\varepsilon)_n=\frac{\log16}{\pi^2n}+O(\log n/n^2),\quad
 (\varepsilon*\varepsilon)_n=O(n^{-2}).
\]
For the middle estimate, write
$(u*\varepsilon)_n=\pi^{-1}\sum_{j=0}^{n-1}\varepsilon_j/(n-j)$, split at $n/2$, and use
$(n-j)^{-1}-n^{-1}=j/(n(n-j))$ on the first part and $\varepsilon_j=O(n^{-2})$ on the
second. The last estimate follows by the same split and summability of $\varepsilon_j$.
Since $\kappa_n=(u*u)_n+2(u*\varepsilon)_n+(\varepsilon*\varepsilon)_n$,
$H_{n-1}=\log n+\gamma+O(n^{-1})$ proves \eqref{eq:kappa-asymptotic}.

For \eqref{eq:Legendre-weak}, use the orthonormal Legendre polynomials
$\mathsf p_n=\sqrt{(2n+1)/2}\,\mathsf P_n$. Their recurrence is
\[
 x\mathsf p_n=a_{n+1}^{(0)}\mathsf p_{n+1}+a_n^{(0)}\mathsf p_{n-1},\qquad
 a_n^{(0)}=\frac{n}{\sqrt{4n^2-1}}\longrightarrow\frac12\quad(n\geq1),
\]
with $a_0^{(0)}=0$. For a fixed integer $m\geq0$, expanding $x^m\mathsf p_n$ by the recurrence shows that the
coefficient of $\mathsf p_n$ tends to zero for odd $m$ and to
$2^{-2r}\binom{2r}{r}$ for $m=2r$. There are $\binom{2r}{r}$ paths with equal numbers
of upward and downward steps, and the weight of each tends to $2^{-2r}$. These are exactly the moments of the arcsine measure on $[-1,1]$.
Thus \eqref{eq:Legendre-weak} holds for polynomials, since $2n/(2n+1)\to1$.
Uniform polynomial approximation extends the limit to continuous $q$, using
$n\int\mathsf P_n^2\,\dd x=2n/(2n+1)\leq1$.
\end{proof}

\begin{theorem}[Linear response to an endpoint cusp]\label{thm:endpoint-response}
Let $B\in C([-1,1])\cap C^2((-1,1))$ be even and real-valued, with
$(1-x^2)B'(x)\to0$ at both endpoints. Assume that
\begin{equation}\label{eq:general-cusp}
 \mathcal LB(x)=\frac{a}{\sqrt{1-x^2}}+J_B(x),\qquad J_B\in C([-1,1]),
 \quad a\in\R.
\end{equation}
Then
\begin{equation}\label{eq:general-cusp-response}
 \mathscr R_n[B]
 =\frac{a}{2\pi}\frac{\log n}{n^2}
 +\frac1{n^2}\left(\frac{a(\gamma+\log16)}{2\pi}
                  +\frac14\mathcal A(J_B)\right)+o(n^{-2}).
\end{equation}
\end{theorem}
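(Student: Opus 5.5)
The plan is to start from the second form \eqref{eq:general-response-LB} of \cref{thm:linear-response}. Its hypotheses hold: $B\in C^2((-1,1))$, $(1-x^2)B'\to0$ at both endpoints, and $\mathcal LB\in L^1(-1,1)$, because both $(1-x^2)^{-1/2}$ and $J_B$ are integrable. Inserting the decomposition \eqref{eq:general-cusp} splits the response into two terms:
\[
 \mathscr R_n[B]=\frac{a}{2(2n+1)}\int_{-1}^1\frac{\mathsf P_n(x)^2}{\sqrt{1-x^2}}\,\dd x
 +\frac1{2(2n+1)}\int_{-1}^1\mathsf P_n(x)^2J_B(x)\,\dd x .
\]
I will treat the singular and regular terms separately.

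For the singular term, the integral equals $\pi\mathcal A(\mathsf P_n^2)=\pi\kappa_n$ by \cref{lem:Legendre-averages}. Hence the singular term is $a\pi\kappa_n/(2(2n+1))$. Substituting \eqref{eq:kappa-asymptotic} gives
\[
\frac{a\pi}{2(2n+1)}\cdot\frac{2}{\pi^2 n}\bigl(\log n+\gamma+\log16\bigr)+O\!\left(\frac{\log n}{n^3}\right).
\]
Using $1/(2n+1)=1/(2n)+O(n^{-2})$, this becomes
\[
\frac{a}{2\pi}\frac{\log n+\gamma+\log16}{n^2}+O\!\left(\frac{\log n}{n^3}\right),
\]
which accounts for the logarithmic term and the $a(\gamma+\log16)/(2\pi)$ part of the constant.

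For the regular term, $J_B$ is continuous on $[-1,1]$, so \eqref{eq:Legendre-weak} gives $n\int\mathsf P_n^2J_B\,\dd x=\mathcal A(J_B)+o(1)$. Dividing by $2(2n+1)n=4n^2(1+O(1/n))$ gives $\mathcal A(J_B)/(4n^2)+o(n^{-2})$. Adding the two contributions yields \eqref{eq:general-cusp-response}.

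The only delicate point is confirming that \eqref{eq:general-response-LB} applies. The condition $\mathcal LB\in L^1$ is immediate from the decomposition. The boundary term $(1-x^2)B\,(\mathsf P_n^2)'$ vanishes at the endpoints because $B$ is continuous up to $\pm1$, and the other boundary term vanishes by the hypothesis on $(1-x^2)B'$. Everything else is direct substitution of the estimates already proved in \cref{lem:Legendre-averages}, so no step should be a real obstacle. The remainder is $o(n^{-2})$ rather than a sharper bound only because \eqref{eq:Legendre-weak} gives no rate for general continuous $J_B$.
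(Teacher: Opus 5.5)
Your proposal is correct and is essentially the paper's own proof. Like the paper, you apply \eqref{eq:general-response-LB} and rewrite the singular part as $a\pi\kappa_n/(2(2n+1))$. You then handle the two summands with the asymptotic \eqref{eq:kappa-asymptotic} and the weak limit \eqref{eq:Legendre-weak} from \cref{lem:Legendre-averages}, and you give slightly more detail on the boundary terms and the error orders than the paper does.
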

\begin{proof}
The hypotheses imply $\mathcal LB\in L^1(-1,1)$, so \eqref{eq:general-response-LB} gives
\begin{equation}\label{eq:general-exact-convolution}
 \mathscr R_n[B]=\frac{a\pi \kappa_n}{2(2n+1)}
 +\frac1{2(2n+1)}\int_{-1}^1\mathsf P_n(x)^2J_B(x)\,\dd x.
\end{equation}
Apply both limits in \cref{lem:Legendre-averages}. The first summand equals
$a(\log n+\gamma+\log16)/(2\pi n^2)+o(n^{-2})$, while the second equals
$\mathcal A(J_B)/(4n^2)+o(n^{-2})$.
\end{proof}

We now apply the theorem to $B_*$ from \eqref{eq:intro-deformation}.
For $0<|x|<1$, direct differentiation gives
\begin{equation}\label{eq:LBstar}
 \mathcal LB_*(x)=\frac2{\pi\sqrt{1-x^2}}+J_*(x),
\end{equation}
where
\begin{equation}\label{eq:J-explicit}
 J_*(x)=\frac{x}{\sqrt{1-x^2}\arcsin x}
      +\frac1{(\arcsin x)^2}-\frac1{x^2}-1
      -\frac2{\pi\sqrt{1-x^2}}.
\end{equation}
The function $J_*$ extends continuously to $[-1,1]$, with
\begin{equation}\label{eq:J-endpoints}
 J_*(0)=-\frac13-\frac2\pi,\qquad J_*(1)=J_*(-1)=\frac8{\pi^2}-2.
\end{equation}
Indeed, at zero $\mathcal LB_*=-1/3+4x^2/15+O(x^4)$.
At $x=\cos\theta$ with $\theta\downarrow0$, the identity
$\arcsin x=\pi/2-\theta$ gives
\[
 \mathcal LB_*(\cos\theta)=\frac2{\pi\sin\theta}+\frac8{\pi^2}-2+O(\theta).
\]
Evenness treats the other endpoint. Moreover,
$B_*'=1/(\sqrt{1-x^2}\arcsin x)-1/x$ and
$(1-x^2)B_*'=O(\sqrt{1-|x|})$ at the endpoints, so all hypotheses hold.

\begin{corollary}[Response to the Chenevier weight]\label{cor:full-background}
For the deformation \eqref{eq:intro-deformation},
\begin{equation}\label{eq:full-background-asymptotic}
 \dot\alpha_{n-1}(0)
 =\frac{\log n}{\pi^2n^2}+\frac{C_{\mathrm{lin}}}{n^2}+o(n^{-2}),
\end{equation}
where
\begin{equation}\label{eq:Clinear}
 C_{\mathrm{lin}}=\frac{\gamma+\log16}{\pi^2}
  +\frac1{4\pi}\int_{-1}^1\frac{J_*(x)}{\sqrt{1-x^2}}\,\dd x.
\end{equation}
There is also the exact identity
\begin{equation}\label{eq:full-response-exact}
 \dot\alpha_{n-1}(0)=\frac{\kappa_n}{2n+1}
  +\frac1{2(2n+1)}\int_{-1}^1\mathsf P_n(x)^2J_*(x)\,\dd x.
\end{equation}
\end{corollary}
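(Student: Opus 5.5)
The corollary is a direct specialization of \cref{thm:endpoint-response} and of the exact formula \eqref{eq:general-exact-convolution} to $B=B_*$. The first task is to confirm that $B_*$ satisfies the hypotheses of \cref{thm:endpoint-response}:
\begin{itemize}[label={}]
\item $B_*$ is even and real-valued.
\item $B_*$ is continuous on $[-1,1]$, since $\arcsin x/x$ is positive and continuous there, with value $\pi/2$ at $\pm1$.
\item $B_*\in C^2((-1,1))$; the point $x=0$ is removable because $\arcsin x/x$ is real-analytic and positive near $0$.
\item $(1-x^2)B_*'\to0$ at $\pm1$, by the displayed bound $O(\sqrt{1-|x|})$.
\end{itemize}
Next, \eqref{eq:LBstar} together with the continuity of $J_*$ on $[-1,1]$ gives the cusp decomposition \eqref{eq:general-cusp} with $a=2/\pi$ and $J_B=J_*$.

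To justify that continuity, I would argue as follows. On $(0,1)$ the formula \eqref{eq:J-explicit} is continuous. At $x=0$ the Taylor expansion $\mathcal LB_*=-1/3+O(x^2)$ shows that $J_*=\mathcal LB_*-2/(\pi\sqrt{1-x^2})$ has the finite limit $-1/3-2/\pi$. At $x=1$, substitute $x=\cos\theta$ and expand $\arcsin x=\pi/2-\theta$, $\sqrt{1-x^2}=\sin\theta$. The $1/\sin\theta$ terms cancel against $2/(\pi\sin\theta)$: the coefficient $x/\arcsin x\to 2/\pi$ matches. The next-order terms then give the limit $8/\pi^2-2$. I expect this endpoint cancellation to be the only delicate point. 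It amounts to expanding $\cos\theta/(\sin\theta(\pi/2-\theta))$ to order $\theta^0$, which gives $(2/\pi)/\sin\theta+4/\pi^2+O(\theta)$. Adding $1/(\pi/2)^2-1-1=4/\pi^2-2$ yields the stated constant. Evenness handles $x=-1$.

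With the hypotheses in place, \eqref{eq:general-exact-convolution} with $a=2/\pi$ gives the exact identity \eqref{eq:full-response-exact}, because $a\pi\kappa_n/(2(2n+1))=\kappa_n/(2n+1)$. Then \eqref{eq:general-cusp-response} with $a/(2\pi)=1/\pi^2$ gives the leading term $\log n/(\pi^2n^2)$ and the constant
\[
\frac{\gamma+\log16}{\pi^2}+\frac14\mathcal A(J_*).
\]
Finally, $\mathcal A(J_*)=\pi^{-1}\int_{-1}^1J_*(x)(1-x^2)^{-1/2}\dd x$, which matches \eqref{eq:Clinear} and completes the argument.
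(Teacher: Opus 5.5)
Your proposal is correct and follows the paper's own route: the paper checks the same hypotheses in the paragraph preceding the corollary (continuity of $J_*$ with $J_*(0)=-1/3-2/\pi$ and $J_*(\pm1)=8/\pi^2-2$, and $(1-x^2)B_*'=O(\sqrt{1-|x|})$), then specializes \cref{thm:endpoint-response} and \eqref{eq:general-exact-convolution} with $a=2/\pi$ and $J_B=J_*$. Your endpoint cancellation and the arithmetic $a\pi/2=1$, $a/(2\pi)=1/\pi^2$ are both right.
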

\begin{proof}
Use $a=2/\pi$ and $J_B=J_*$ in \cref{thm:endpoint-response} and
\eqref{eq:general-exact-convolution}.
\end{proof}

This proves \cref{thm:intro-response}. Numerical quadrature of the convergent integral in
\eqref{eq:Clinear} gives $C_{\mathrm{lin}}\approx0.08225251457$.
The formula also sums the Fourier response rigorously. The identity
$B_*(\cos\theta)=V(2\theta)$, with $V$ read periodically, gives the uniformly convergent
series
\[
 B_*(x)=V_0+2\sum_{k\geq1}V_kT_{2k}(x).
\]
For fixed $n$, \eqref{eq:general-response} is a bounded linear functional of $B$ and
annihilates constants. Hence, for $R_{n,k}=\mathscr R_n[2T_{2k}]$ with $n,k\geq1$,
\begin{equation}\label{eq:summed-mode-identity}
 \dot\alpha_{n-1}(0)=\sum_{k\geq1}V_kR_{n,k}.
\end{equation}
For each fixed $n$, the boundedness of $\mathscr R_n$ and $\|T_{2k}\|_\infty=1$
give $|R_{n,k}|\leq 2\|\mathscr R_n\|$. Since $\sum_{k\geq1}|V_k|<\infty$,
the series is absolutely convergent. Thus \cref{cor:full-background}
evaluates the full Fourier response without interchanging an asymptotic expansion and an
infinite sum. Individual-mode expansions are given in Appendix~\ref{app:response}.

To pass from the derivative at $t=0$ to the original weight at $t=1$, one must estimate
the nonlinear remainder
\begin{equation}\label{eq:nonlinear-remainder}
 \mathcal N_n=\alpha_{n-1}(1)-\alpha_{n-1}(0)-\dot\alpha_{n-1}(0).
\end{equation}
An estimate $\mathcal N_n=O(n^{-2})$ would suffice to prove the leading logarithmic
correction at $t=1$. A limit for $n^2\mathcal N_n$ would additionally identify the
next constant. The following more precise assertion remains conjectural.

\begin{conjecture}\label{conj:alpha-asymptotic}
There exists $C_\alpha\in\R$ such that
\begin{equation}\label{eq:alpha-conjecture}
 \alpha_{n-1}
 =-\frac1{2n+1}+\frac{\log n}{\pi^2n^2}+\frac{C_\alpha}{n^2}
  +O\left(\frac{\log^2n}{n^3}\right).
\end{equation}
\end{conjecture}

In particular, $C_\alpha=C_{\mathrm{lin}}$ would require $\mathcal N_n=o(n^{-2})$.
The smooth-background Fisher--Hartwig results of \cite{DeiftItsKrasovsky2011} do not
apply directly without verifying their regularity assumptions at the cusp
\eqref{eq:V-cusp}.

The exact monic recurrence relation is
\begin{equation}\label{eq:beta-alpha}
 \beta_n=\frac14(1-\alpha_{n-2})(1+\alpha_{n-1}),\qquad n\geq2.
\end{equation}
An error estimate for $\alpha_{n-1}$ alone does not control its first difference.
The following implication states the additional regularity needed for $\beta_n$.

\begin{proposition}\label{prop:conditional-beta}
Suppose that, as $n\to\infty$,
\[
  \alpha_{n-1}=-\frac1{2n+1}
  +\frac{\log n}{\pi^2n^2}+e_n,
  \qquad
  e_n=O(n^{-2}),
  \qquad
  e_n-e_{n-1}=O(n^{-3}).
\]
Then
\begin{equation}\label{eq:beta-log-consequence}
  \beta_n
  =\frac{n^2}{4n^2-1}
  -\frac{\log n}{4\pi^2n^3}+O(n^{-3}).
\end{equation}
\end{proposition}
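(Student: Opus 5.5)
The plan is to substitute the hypothesis into the exact relation \eqref{eq:beta-alpha}, $\beta_n=\tfrac14(1-\alpha_{n-2})(1+\alpha_{n-1})$, and expand around the Legendre values. Write
\[
 \lambda_m=-\frac1{2m+1},\qquad \delta_m=\frac{\log m}{\pi^2m^2}+e_m,
\]
so that $\alpha_{m-1}=\lambda_m+\delta_m$. The first step is to check that the unperturbed product reproduces the Legendre coefficient. Indeed, $1-\lambda_{n-1}=2n/(2n-1)$ and $1+\lambda_n=2n/(2n+1)$, so
\[
 \tfrac14(1-\lambda_{n-1})(1+\lambda_n)=\frac{n^2}{4n^2-1},
\]
which is consistent with \eqref{eq:alpha-legendre}.

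Expanding the product exactly gives
\[
 \beta_n-\frac{n^2}{4n^2-1}
 =\frac14\Bigl[(\delta_n-\delta_{n-1})+\frac{\delta_n}{2n-1}+\frac{\delta_{n-1}}{2n+1}-\delta_{n-1}\delta_n\Bigr].
\]
Here I have used $-\lambda_{n-1}=1/(2n-1)$ and $-\lambda_n=1/(2n+1)$. The point to stress is that a crude bound such as $O(|\delta|/n)$ would only give $O(\log n/n^3)$. That is the same order as the claimed main term, so the cross terms have to be kept exactly rather than estimated.

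Next I treat each bracketed term separately.

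\emph{Quadratic term.} Since $\delta_m=O(\log m/m^2)$, the product $\delta_{n-1}\delta_n$ is $O(\log^2n/n^4)=O(n^{-3})$.

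\emph{Cross terms.} Both cross terms equal $\log n/(2\pi^2n^3)+O(n^{-3})$. This uses $e_m=O(m^{-2})$, together with $\log(n-1)=\log n+O(1/n)$ and $(n-1)^{-2}=n^{-2}(1+O(1/n))$. Their sum is therefore $\log n/(\pi^2n^3)+O(n^{-3})$.

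\emph{Difference term.} This is the main obstacle, and it is exactly where the second hypothesis enters, since an error bound on $\alpha_{n-1}$ alone does not control first differences. I split $\delta_n-\delta_{n-1}$ into the explicit part and $e_n-e_{n-1}$. The hypothesis $e_n-e_{n-1}=O(n^{-3})$ disposes of the second piece. For the explicit part, the mean value theorem applied to $h(x)=\log x/(\pi^2x^2)$, with $h'(x)=(1-2\log x)/(\pi^2x^3)$, gives
\[
 h(n)-h(n-1)=-\frac{2\log n}{\pi^2n^3}+O(n^{-3}).
\]
To justify this, note that for $\xi\in(n-1,n)$, replacing $\xi$ by $n$ changes $h'(\xi)$ only by $O(\log n/n^4)$.

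Collecting the terms, the bracket equals
\[
 -\frac{2\log n}{\pi^2n^3}+\frac{\log n}{\pi^2n^3}+O(n^{-3})=-\frac{\log n}{\pi^2n^3}+O(n^{-3}).
\]
Multiplying by $\tfrac14$ yields \eqref{eq:beta-log-consequence}.
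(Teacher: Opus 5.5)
Your proposal is correct and follows the paper's proof essentially step for step. You substitute into \eqref{eq:beta-alpha} and isolate the exact linear perturbation $\bigl(1+\frac1{2n-1}\bigr)\delta_n-\bigl(1-\frac1{2n+1}\bigr)\delta_{n-1}$; you expand the explicit $\log n/(\pi^2n^2)$ part to order $n^{-3}$, keeping the cross terms; you use $e_n-e_{n-1}=O(n^{-3})$ for the error part; and you bound the quadratic term by $O(\log^2n/n^4)$. The only cosmetic differences are that you group the terms as difference plus cross terms rather than as $h$-part plus $e$-part, and that you justify the Taylor step through the mean value theorem.
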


\begin{proof}
Write
\[
  \alpha_{n-1}=-\frac1{2n+1}+\delta_n,
  \qquad
  \delta_n=h_n+e_n,
  \qquad
  h_n=\frac{\log n}{\pi^2n^2},
\]
and substitute into \eqref{eq:beta-alpha}. The unperturbed terms give
$n^2/(4n^2-1)$. The linear perturbation is
\[
  \frac14\left[
  \left(1+\frac1{2n-1}\right)\delta_n
  -\left(1-\frac1{2n+1}\right)\delta_{n-1}
  \right].
\]
Taylor expansion of the explicit sequence $h_n$ gives
\[
  \left(1+\frac1{2n-1}\right)h_n
  -\left(1-\frac1{2n+1}\right)h_{n-1}
  =-\frac{\log n}{\pi^2n^3}+O(n^{-3}).
\]
The assumptions on $e_n$ give
\[
  \begin{aligned}
  &\left(1+\frac1{2n-1}\right)e_n
  -\left(1-\frac1{2n+1}\right)e_{n-1}\\
  &\qquad=(e_n-e_{n-1})
  +O\!\left(\frac{|e_n|+|e_{n-1}|}{n}\right)
  =O(n^{-3}).
  \end{aligned}
\]
Finally, the quadratic perturbation is
$-\delta_{n-1}\delta_n/4=O(\log^2n/n^4)$. Combining the three estimates proves
\eqref{eq:beta-log-consequence}.
\end{proof}

\appendix
\section{Fourier-mode expansions and endpoint sums}\label{app:response}

At the baseline, the circle measure is
\[
 \dd\sigma_0(e^{i\varphi})=\frac18|1-e^{i\varphi}|\,\dd\varphi,
 \qquad 0\leq\varphi<2\pi.
\]
Multiplication by $e^{t(z^k+z^{-k})}$, followed by normalization, corresponds on the
interval to the perturbation $B=2T_{2k}$. Therefore \cref{thm:linear-response} gives
\begin{equation}\label{eq:Rnk-Legendre}
 R_{n,k}=\frac1{2n+1}\int_{-1}^1
   \mathsf P_n(x)^2\mathcal LT_{2k}(x)\,\dd x.
\end{equation}
Equivalently,
\begin{equation}\label{eq:Rnk-Gn}
 R_{n,k}=\int_{-1}^1 T_{2k}(x)G_n(x)\,\dd x,
\end{equation}
where
\begin{equation}\label{eq:Gn}
 G_n=\frac1{2n+1}\mathcal L(\mathsf P_n^2)
 =\frac2{2n+1}\left(n(n+1)\mathsf P_n^2-(1-x^2)(\mathsf P_n')^2\right).
\end{equation}
Expand the polynomial $\mathcal LT_{2k}$ in the Legendre basis:
\begin{equation}\label{eq:L-T-expansion}
  \mathcal LT_{2k}=\sum_{j=1}^kd_{k,j}\mathsf P_{2j}
\end{equation}
since $\mathcal LT_{2k}$ is even of degree $2k$ and, by the symmetry of $\mathcal L$,
is orthogonal to the constant polynomial. Set
\begin{equation}\label{eq:Inj-omega}
  I_{n,j}=\int_{-1}^1\mathsf P_n(x)^2\mathsf P_{2j}(x)\,\dd x.
\end{equation}
We retain the weights $\omega_j$ from \eqref{eq:omega-kappa}.
Orthogonality gives $I_{n,j}=0$ when $j>n$. For $0\leq j\leq n$, the Gaunt integral is
\begin{equation}\label{eq:Gaunt-explicit}
  I_{n,j}=2\frac{(n+j)!^2(2j)!^2(2n-2j)!}
  {j!^4(n-j)!^2(2n+2j+1)!}.
\end{equation}
In particular,
\begin{equation}\label{eq:Gaunt-product}
  \frac{I_{n,j}}{\omega_j}
  =\frac{2}{2n+1}\prod_{r=0}^{j-1}
  \frac{4(n-r)(n+r+1)}{(2n-2r-1)(2n+2r+3)}.
\end{equation}

\begin{lemma}\label{lem:Gaunt-expansion}
For every fixed $j\geq0$, put $\lambda_j=2j(2j+1)$. Then, as $n\to\infty$,
\begin{equation}\label{eq:Inj-expansion}
  \begin{aligned}
  \frac{I_{n,j}}{\omega_j}=\frac1n\Bigg[&1-\frac1{2n}
  +\frac{\lambda_j+2}{8n^2}
  -\frac{3\lambda_j+2}{16n^3}\\
  &+\frac{3\lambda_j^2+22\lambda_j+8}{128n^4}
  +O_j(n^{-5})\Bigg].
  \end{aligned}
\end{equation}
\end{lemma}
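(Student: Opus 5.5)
The plan is to expand the exact product formula \eqref{eq:Gaunt-product} directly in powers of $1/n$; no limiting argument is needed, because for fixed $j$ the product has only $j$ factors. Write
\[
 \frac{I_{n,j}}{\omega_j}=\frac{2}{2n+1}\exp\Bigl(\sum_{r=0}^{j-1}\log f_r(n)\Bigr),
\]
where
\[
 f_r(n)=\frac{(1-r/n)\bigl(1+(r+1)/n\bigr)}{\bigl(1-(2r+1)/(2n)\bigr)\bigl(1+(2r+3)/(2n)\bigr)}.
\]
The prefactor has the elementary expansion
\[
 \frac{2}{2n+1}=\frac1n\Bigl(1-\frac1{2n}+\frac1{4n^2}-\frac1{8n^3}+\frac1{16n^4}+O(n^{-5})\Bigr).
\]
So it suffices to expand $S_j(n)=\sum_{r<j}\log f_r(n)$ up to order $n^{-4}$, with an error $O_j(n^{-5})$.

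For each fixed $r$, I expand the four logarithms $\log(1-r/n)$, $\log(1+(r+1)/n)$, $-\log(1-(2r+1)/(2n))$ and $-\log(1+(2r+3)/(2n))$ by Taylor series. The coefficient of $n^{-m}$ is then a polynomial in $r$ of degree $m$.
\begin{itemize}
\item At order $n^{-1}$ the contributions cancel: $-r+(r+1)+(2r+1)/2-(2r+3)/2=0$.
\item At order $n^{-2}$ a short computation gives $r+\tfrac34$. Summing over $0\le r<j$ yields $(2j^2+j)/4=\lambda_j/8$.
\end{itemize}
Combined with the prefactor, this already produces the coefficient $\tfrac14+\lambda_j/8=(\lambda_j+2)/8$ of $n^{-2}$, consistent with \eqref{eq:Inj-expansion}. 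The orders $n^{-3}$ and $n^{-4}$ are handled in the same way. I compute the polynomial coefficients in $r$, sum them with the Faulhaber formulas for $\sum r^k$ ($k\le4$), and obtain polynomials in $j$ of degree at most $5$.

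I then exponentiate,
\[
 e^{S_j}=1+S_j+\tfrac12S_j^2+\cdots,
\]
keeping terms through $n^{-4}$. Since $S_j$ starts at order $n^{-2}$, only $S_j^2$ contributes beyond the linear term, namely the square of the $n^{-2}$ coefficient. Finally I multiply by the prefactor expansion.

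The last step is to rewrite the resulting polynomials in $j$ in terms of $\lambda_j=2j(2j+1)$. I expect the summed coefficients to be polynomials in $\lambda_j$. This should follow from the symmetry $j\mapsto -j-\tfrac12$, under which the product formula's structure is invariant, as for Legendre eigenvalues. Checking this and matching the stated numerators $-(3\lambda_j+2)/16$ and $(3\lambda_j^2+22\lambda_j+8)/128$ is routine but error-prone bookkeeping.

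That order-$n^{-3}$ and $n^{-4}$ algebra is the main obstacle. To control it, I would check the final formula against the exact values $j=0$ (where the bracket must be the expansion of $2n/(2n+1)$, with $\lambda_0=0$) and $j=1$, computed directly from the single factor $f_0$. The error term is uniform only for fixed $j$, because there are $j$ factors each with an $O_j(n^{-5})$ remainder; this is why the lemma states $O_j$.
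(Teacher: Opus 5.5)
Your proposal is correct and follows the paper's proof: the paper also takes logarithms in the finite product \eqref{eq:Gaunt-product}, expands each factor in $1/n$, sums over $0\le r<j$, and exponentiates, differing only cosmetically in putting the prefactor $2/(2n+1)$ inside the logarithm. For the bookkeeping you flag, the per-factor coefficients at orders $n^{-3}$ and $n^{-4}$ are $-(r+\tfrac34)$ and $r^3+\tfrac94r^2+\tfrac52r+\tfrac{33}{32}$; they sum to $S_j=\frac{\lambda_j}{8n^2}-\frac{\lambda_j}{8n^3}+\frac{\lambda_j^2+5\lambda_j}{64n^4}+O_j(n^{-5})$, and exponentiating and multiplying by the prefactor reproduces the stated numerators exactly.
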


\begin{proof}
The Legendre product formula and the explicit $3j$ symbol with zero lower indices
\cite[equations (34.3.19) and (34.3.5)]{DLMF} give \eqref{eq:Gaunt-explicit}.
Dividing by $\omega_j$ and cancelling factorials gives \eqref{eq:Gaunt-product}. For fixed $j$, take logarithms in that
finite product and expand each factor in powers of $1/n$. With
$\lambda_j=2j(2j+1)$, summation over $r=0,\ldots,j-1$ gives
\[
  \log\!\left(\frac{nI_{n,j}}{\omega_j}\right)
  =-\frac1{2n}+\frac{\lambda_j+1}{8n^2}
  -\frac{3\lambda_j+1}{24n^3}
  +\frac{\lambda_j^2+5\lambda_j+1}{64n^4}
  +O_j(n^{-5}).
\]
Exponentiating this displayed formula gives \eqref{eq:Inj-expansion}.
\end{proof}

The functional $\mathcal A$ in \eqref{eq:Legendre-operator} satisfies
$\mathcal A(\mathsf P_{2j})=\omega_j$. Elementary trigonometric calculation gives
\begin{equation}\label{eq:L-moments}
  \mathcal A(\mathcal LT_{2k})=2k,
  \qquad
  \mathcal A(\mathcal L^2T_{2k})=12k^3,
\end{equation}
\begin{equation}\label{eq:L3-moment}
  \mathcal A(\mathcal L^3T_{2k})=12k^3(5k^2+1).
\end{equation}
Indeed,
\[
  \mathcal LT_m(\cos\theta)
  =m^2\cos(m\theta)+m\cot\theta\sin(m\theta),
\]
and, for $r\geq1$,
\[
  \cot\theta\sin(2r\theta)
  =1+\cos(2r\theta)+2\sum_{\ell=1}^{r-1}\cos(2\ell\theta).
\]
Consequently,
\[
  \mathcal LT_{2r}
  =2r+4r\sum_{\ell=1}^{r-1}T_{2\ell}+2r(2r+1)T_{2r}.
\]
Using $\mathcal A(T_j)=0$ for $j\geq1$ and applying this triangular identity once and
twice more, together with the standard sums of first and third powers, gives for even
$m\geq2$
\[
  \mathcal A(\mathcal LT_m)=m,
  \qquad
  \mathcal A(\mathcal L^2T_m)=\frac32m^3,
  \qquad
  \mathcal A(\mathcal L^3T_m)=\frac38m^3(5m^2+4).
\]
Putting $m=2k$ yields \eqref{eq:L-moments}--\eqref{eq:L3-moment}.

\begin{theorem}[Fixed-mode response asymptotics]\label{thm:Rnk-fixed-k}
For every fixed integer $k\geq1$, as $n\to\infty$,
\begin{equation}\label{eq:Rnk-fixed-expansion}
  \begin{aligned}
  R_{n,k}={}&\frac{k}{n^2}-\frac{k}{n^3}
  +\frac{3k(k^2+1)}{4n^4}
  -\frac{k(3k^2+1)}{2n^5}\\
  &+\frac{k(45k^4+123k^2+20)}{64n^6}
  +O_k(n^{-7}).
  \end{aligned}
\end{equation}
\end{theorem}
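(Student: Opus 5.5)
Write $R_{n,k}$ through \eqref{eq:Rnk-Legendre} and the finite Legendre expansion \eqref{eq:L-T-expansion}. For $n\geq k$ this gives
\[
 R_{n,k}=\frac1{2n+1}\sum_{j=1}^k d_{k,j}\,I_{n,j}.
\]
The sum has at most $k$ terms, and $k$ is fixed. So the $O_j(n^{-5})$ remainders in \cref{lem:Gaunt-expansion} add up to a single $O_k(n^{-5})$ error, and we may insert \eqref{eq:Inj-expansion} term by term. The result is
\[
 R_{n,k}=\frac1{n(2n+1)}\Bigl[S_0-\frac{S_0}{2n}+\frac{S_1+2S_0}{8n^2}
 -\frac{3S_1+2S_0}{16n^3}+\frac{3S_2+22S_1+8S_0}{128n^4}+O_k(n^{-5})\Bigr],
\]
where $S_p=\sum_j d_{k,j}\,\omega_j\lambda_j^p$.

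The key step is to recognise the sums $S_p$ as arcsine averages. Each $\mathsf P_{2j}$ is an eigenfunction of the Legendre operator: $\mathcal L\mathsf P_{2j}=\lambda_j\mathsf P_{2j}$ with $\lambda_j=2j(2j+1)$. Hence applying $\mathcal L^p$ to \eqref{eq:L-T-expansion} gives $\mathcal L^{p+1}T_{2k}=\sum_j d_{k,j}\lambda_j^p\mathsf P_{2j}$. Applying $\mathcal A$ and using $\mathcal A(\mathsf P_{2j})=\omega_j$ yields $S_p=\mathcal A(\mathcal L^{p+1}T_{2k})$. By \eqref{eq:L-moments}--\eqref{eq:L3-moment},
\[
 S_0=2k,\qquad S_1=12k^3,\qquad S_2=12k^3(5k^2+1).
\]
This removes any need to compute the coefficients $d_{k,j}$ explicitly.

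It remains to expand
\[
 \frac1{2n+1}=\frac1{2n}\Bigl(1-\frac1{2n}+\frac1{4n^2}-\frac1{8n^3}+\frac1{16n^4}-\cdots\Bigr),
\]
multiply it into the bracket, and collect powers of $n$ up to $n^{-6}$.

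As a check, the leading term is $S_0/(2n^2)=k/n^2$. At order $n^{-3}$ the contribution is $\frac1{2n^3}\bigl(-\frac{S_0}{2}-\frac{S_0}{2}\bigr)=-k/n^3$, as required. At order $n^{-4}$ it is
\[
 \frac1{2}\Bigl(\frac{S_0}{4}+\frac{S_0}{4}+\frac{12k^3+4k}{8}\Bigr)=\frac{3k(k^2+1)}{4},
\]
which also matches. The $n^{-5}$ and $n^{-6}$ coefficients follow from the same bookkeeping. I expect the main obstacle to be this routine but error-prone coefficient collection at order $n^{-6}$. There the $\lambda_j^2$ term from \cref{lem:Gaunt-expansion} must combine with the third-moment identity \eqref{eq:L3-moment} to produce $45k^4+123k^2+20$.

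Apart from that, the only point to record is that \eqref{eq:Gaunt-explicit} is valid for $n\geq k$. This is harmless for an asymptotic statement with $k$ fixed.
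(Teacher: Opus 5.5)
Your proposal is correct and follows the paper's proof step for step. Both arguments use the finite Legendre expansion of $\mathcal LT_{2k}$, insert the Gaunt expansion \eqref{eq:Inj-expansion} termwise for $n\geq k$, identify $\sum_j d_{k,j}\omega_j\lambda_j^p$ with $\mathcal A(\mathcal L^{p+1}T_{2k})$, and expand $(2n+1)^{-1}$; your eigenvalue justification $\mathcal L\mathsf P_{2j}=\lambda_j\mathsf P_{2j}$ spells out a step the paper only states, and completing your bookkeeping with $S_0=2k$, $S_1=12k^3$, $S_2=12k^3(5k^2+1)$ does yield $-k(3k^2+1)/(2n^5)$ and $k(45k^4+123k^2+20)/(64n^6)$.
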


\begin{proof}
By \eqref{eq:Rnk-Legendre}, \eqref{eq:L-T-expansion}, and
\eqref{eq:Inj-omega},
\[
  R_{n,k}=\frac1{2n+1}\sum_{j=1}^{\min(n,k)}d_{k,j}I_{n,j}.
\]
Moreover,
\[
  \sum_{j=1}^kd_{k,j}\omega_j\lambda_j^r
  =\mathcal A(\mathcal L^{r+1}T_{2k}).
\]
For fixed $k$ and $n\geq k$, insert \eqref{eq:Inj-expansion}, use
\eqref{eq:L-moments}--\eqref{eq:L3-moment}, and expand the prefactor
$(2n+1)^{-1}$. Collecting equal powers of $1/n$ gives
\eqref{eq:Rnk-fixed-expansion}.
\end{proof}

For example, the first mode has the exact response
\begin{equation}\label{eq:Rn1-exact}
  R_{n,1}=\frac{16n(n+1)}{(2n-1)(2n+1)^2(2n+3)}.
\end{equation}
At the opposite end of the frequency range one has the following fixed-degree estimate.

\begin{proposition}[High-frequency response]\label{prop:Rnk-high-frequency}
For each fixed $n\geq1$,
\begin{equation}\label{eq:Rnk-high-frequency}
  R_{n,k}=-\frac{n(n+1)}{(2n+1)k^2}+O_n(k^{-4})
  \qquad(k\to\infty).
\end{equation}
\end{proposition}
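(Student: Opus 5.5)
The plan is to treat $R_{n,k}$, for fixed $n$, as a Fourier coefficient of a fixed smooth function and to read off its large-$k$ behaviour by repeated integration by parts. By \eqref{eq:Rnk-Gn}, $R_{n,k}=\int_{-1}^1T_{2k}(x)G_n(x)\,\dd x$, where $G_n$ is the fixed polynomial \eqref{eq:Gn}. Substituting $x=\cos\theta$ gives
\[
 R_{n,k}=\int_0^\pi\cos(m\theta)\,g(\theta)\,\dd\theta,\qquad m=2k,\qquad g(\theta)=G_n(\cos\theta)\sin\theta .
\]
The function $g$ is a trigonometric polynomial, hence smooth. It is odd in $\theta$ and also odd about $\theta=\pi$, so every even derivative of $g$ vanishes at $0$ and $\pi$.

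First integrate by parts. The boundary term $[g\sin(m\theta)/m]_0^\pi$ vanishes, so $R_{n,k}=-m^{-1}\int_0^\pi g'(\theta)\sin(m\theta)\,\dd\theta$. Integrate by parts again. Since $m$ is even, $\cos(m\pi)=\cos 0=1$, and therefore
\[
 R_{n,k}=\frac{g'(\pi)-g'(0)}{m^2}-\frac1{m^2}\int_0^\pi g''(\theta)\cos(m\theta)\,\dd\theta .
\]
The remaining integral is a cosine integral of the same shape, with $g$ replaced by $g''$; $g''$ also vanishes at both endpoints. Applying the same two steps to it produces a boundary term $(g'''(\pi)-g'''(0))/m^2$ and an integral of $g''''$ divided by $m^2$. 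Both are bounded by constants depending only on $n$, so the remainder is $O_n(m^{-4})=O_n(k^{-4})$.

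It remains to evaluate the boundary term. From $g'(\theta)=-G_n'(\cos\theta)\sin^2\theta+G_n(\cos\theta)\cos\theta$ we get $g'(0)=G_n(1)$ and $g'(\pi)=-G_n(-1)$. Since $G_n$ is even, the main term is $-2G_n(1)/m^2=-G_n(1)/(2k^2)$. From \eqref{eq:Gn} and $\mathsf P_n(1)=1$, the factor $(1-x^2)$ kills the derivative term at $x=1$, so $G_n(1)=2n(n+1)/(2n+1)$. This yields exactly $-n(n+1)/((2n+1)k^2)$.

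The only delicate point is bookkeeping: verifying the parity facts that kill the odd-order boundary terms, and using that $m$ is even so that the two endpoint contributions combine rather than cancel. Neither is a serious obstacle. As a sanity check, for $n=0$ the same computation reproduces the classical $\int_{-1}^1T_{2k}\,\dd x=-2/(4k^2-1)$ in the form $G\equiv1$.
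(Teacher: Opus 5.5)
Your proposal is correct and follows essentially the same route as the paper. The paper also substitutes $x=\cos\theta$, integrates by parts twice using $\mathcal H_n(0)=\mathcal H_n(\pi)=0$ and $\mathcal H_n'(0)=-\mathcal H_n'(\pi)=2n(n+1)/(2n+1)$ to obtain the main term, and integrates by parts twice more for the $O_n(k^{-4})$ remainder. One cosmetic slip: your closing sanity check applies to the weight $G\equiv1$, not to the case $n=0$, since $G_0=\mathcal L(1)=0$.
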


\begin{proof}
Writing $x=\cos\theta$ in \eqref{eq:Rnk-Gn} gives
\[
  R_{n,k}=\int_0^\pi \mathcal H_n(\theta)\cos(2k\theta)\,\dd\theta,
  \qquad
  \mathcal H_n(\theta)=G_n(\cos\theta)\sin\theta.
\]
Here $\mathcal H_n(0)=\mathcal H_n(\pi)=0$ and
\[
  \mathcal H_n'(0)=\frac{2n(n+1)}{2n+1},
  \qquad
  \mathcal H_n'(\pi)=-\frac{2n(n+1)}{2n+1}.
\]
Two integrations by parts give the displayed main term; two further integrations by parts
give the stated remainder.
\end{proof}

For a fixed cutoff $K$, the expansion gives
\[
 \sum_{k=1}^K V_kR_{n,k}
 =\frac{S_K}{n^2}-\frac{S_K}{n^3}+O_K(n^{-4}),
 \qquad S_K=\sum_{k=1}^K kV_k.
\]
This identity is not uniform in $K$. The full sum is instead evaluated by
\eqref{eq:summed-mode-identity} and \cref{cor:full-background}.

To describe the divergent endpoint derivative, set
\[
 V_+(z)=\sum_{k\geq1}V_kz^k\quad(|z|<1),\qquad
 S_N=\sum_{k=1}^N kV_k.
\]
The sequence $S_N$ consists of the partial sums of the formal derivative $V_+'(1)$.

\begin{proposition}\label{prop:SN-digamma}
For every $N\geq1$,
\begin{equation}\label{eq:SN-digamma}
  S_N=\frac1\pi\int_0^\infty e^{-\pi t}
  \left(\operatorname{Re}\psi(N+1+it)
  -\operatorname{Re}\psi(1+it)\right)\,\dd t.
\end{equation}
\end{proposition}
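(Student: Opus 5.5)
The plan is to start from the integral representation \eqref{eq:kVk-integral} established in the proof of \cref{thm:fourier-V}:
\[
 kV_k=\frac1\pi\int_0^\infty e^{-\pi t}\frac{k}{k^2+t^2}\,\dd t .
\]
Summing this over $1\leq k\leq N$ and exchanging the finite sum with the integral (legitimate because the sum is finite and each integrand is nonnegative and integrable) gives
\[
 S_N=\frac1\pi\int_0^\infty e^{-\pi t}\sum_{k=1}^N\frac{k}{k^2+t^2}\,\dd t .
\]
What remains is a pointwise identity for the inner sum. For real $t$ we have $\operatorname{Re}\frac1{k+it}=k/(k^2+t^2)$. The recurrence $\psi(z+1)=\psi(z)+1/z$, iterated from $z=1+it$ up to $z=N+1+it$, gives
\[
 \psi(N+1+it)-\psi(1+it)=\sum_{k=1}^N\frac1{k+it}.
\]
Taking real parts yields exactly
\[
 \sum_{k=1}^N\frac{k}{k^2+t^2}=\operatorname{Re}\psi(N+1+it)-\operatorname{Re}\psi(1+it).
\]
Substituting this into the integral proves \eqref{eq:SN-digamma}.

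The only potentially delicate point is the validity of \eqref{eq:kVk-integral} itself. That was already justified in the proof of \cref{thm:fourier-V}, using an Abel factor and dominated convergence, so I will simply invoke it.

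I will also remark that the integrand is bounded by $e^{-\pi t}\sum_{k\le N}1/k$, so the integral in \eqref{eq:SN-digamma} converges absolutely. I do not expect any real obstacle here; the step needing the most care is merely keeping the indices of the digamma telescoping correct, with $1/(k+it)$ for $k$ from $1$ to $N$.
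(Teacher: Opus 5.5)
Your proof is correct and is essentially the paper's argument: you sum \eqref{eq:kVk-integral} over $1\leq k\leq N$ and identify $\sum_{k=1}^N k/(k^2+t^2)$ with $\operatorname{Re}\bigl(\psi(N+1+it)-\psi(1+it)\bigr)$ through the telescoped recurrence $\psi(z+1)=\psi(z)+1/z$. Your remarks on exchanging the finite sum with the integral and on absolute convergence only make explicit what the paper leaves implicit.
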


\begin{proof}
Sum \eqref{eq:kVk-integral} and use
\[
  \sum_{k=1}^N\frac{k}{k^2+t^2}
  =\operatorname{Re}\sum_{k=1}^N\frac1{k+it}
  =\operatorname{Re}\bigl(\psi(N+1+it)-\psi(1+it)\bigr).
\]
\end{proof}

\begin{theorem}\label{thm:Sn-asymptotic}
As $N\to\infty$,
\begin{equation}\label{eq:SN-asymptotic}
  \begin{aligned}
  S_N={}&\frac{\log N}{\pi^2}+\mathfrak c
  +\frac{1}{2\pi^2N}\\
  &+\left(\frac1{\pi^4}-\frac1{12\pi^2}\right)\frac1{N^2}
  -\frac1{\pi^4N^3}+O(N^{-4}),
  \end{aligned}
\end{equation}
where
\begin{equation}\label{eq:c-constant}
  \mathfrak c
  =-\frac1\pi\int_0^\infty e^{-\pi t}
  \operatorname{Re}\psi(1+it)\,\dd t
\end{equation}
and equivalently
\begin{equation}\label{eq:c-series}
  \mathfrak c=\frac{\gamma}{\pi^2}
  +\sum_{k=1}^\infty\left(kV_k-\frac1{\pi^2k}\right).
\end{equation}
Numerical evaluation gives
\[
  \mathfrak c=0.0431127126701\ldots.
\]
\end{theorem}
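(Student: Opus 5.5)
The natural route starts from the digamma representation \eqref{eq:SN-digamma} of \cref{prop:SN-digamma}. It splits $S_N$ into the $N$-independent piece $\mathfrak c$, which is exactly \eqref{eq:c-constant}, and the $N$-dependent piece
\[
 \frac1\pi\int_0^\infty e^{-\pi t}\operatorname{Re}\psi(N+1+it)\,\dd t .
\]
For the latter, use the Stirling expansion of $\psi$ at $w=N+1+it$:
\[
 \psi(w)=\log w-\frac1{2w}-\frac1{12w^2}+\frac1{120w^4}+O(|w|^{-6}),
\]
which holds uniformly for $\operatorname{Re}w\geq1$. Since $e^{-\pi t}$ decays exponentially, only the region $t=O(\log N)$ matters. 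There $|w|^{-6}\leq N^{-6}$, so the remainder contributes $O(N^{-6})$ after integration.

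Next expand each term in powers of $t/(N+1)$ and then in $1/N$. We have $\operatorname{Re}\log(N+1+it)=\log(N+1)+\tfrac12\log\bigl(1+t^2/(N+1)^2\bigr)$, and $\operatorname{Re}(N+1+it)^{-1}=(N+1)/((N+1)^2+t^2)$, with similar formulas for the higher terms. Integrate termwise against $\pi^{-1}e^{-\pi t}$ using the moments $\int_0^\infty t^m e^{-\pi t}\dd t=m!/\pi^{m+1}$. The weight has total mass $\pi^{-2}$, which explains the coefficient of $\log N$. Collect all terms through order $N^{-3}$, with remainder $O(N^{-4})$:
\begin{itemize}
\item $\tfrac12\log(1+t^2/N'^2)$, with $N'=N+1$, contributes $\tfrac{1}{\pi^4N'^2}-\dots$, using $\pi^{-1}\int t^2e^{-\pi t}=2/\pi^4$.
\item $-\tfrac1{2w}$ contributes $-\tfrac{1}{2\pi^2N'}+\tfrac{1}{\pi^4N'^3}+\dots$.
\item $-\tfrac1{12w^2}$ contributes $-\tfrac1{12\pi^2N'^2}+O(N^{-4})$.
\end{itemize}
Finally re-expand $\log(N+1)$ and the powers of $N+1$ in terms of $N$. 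This is a finite bookkeeping exercise, and it should reproduce the coefficients $\tfrac1{2\pi^2}$, $\tfrac1{\pi^4}-\tfrac1{12\pi^2}$ and $-\tfrac1{\pi^4}$. The main risk is arithmetic slips in combining the $\log(N+1)$ expansion with the $1/(2w)$ and $t^2$ terms, so I would check the result numerically against the partial sums $S_N$.

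For the series form \eqref{eq:c-series}, write
\[
 S_N=\sum_{k\le N}\Bigl(kV_k-\frac1{\pi^2k}\Bigr)+\frac{H_N}{\pi^2}.
\]
The summands are $O(k^{-3})$ by \eqref{eq:Vk-first}, so the series converges. Subtracting $(\log N+\gamma)/\pi^2+o(1)$ and comparing with the $N\to\infty$ limit of \eqref{eq:SN-asymptotic}, namely $S_N-\pi^{-2}\log N\to\mathfrak c$, gives \eqref{eq:c-series}. This step only needs the leading part of the asymptotic.

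The numerical value of $\mathfrak c$ would be obtained by quadrature of \eqref{eq:c-constant}, or by accelerating \eqref{eq:c-series} with the tail expansion \eqref{eq:Vk-asymptotic-full}.

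The only genuinely delicate point is the uniformity of the Stirling remainder for complex argument under the $t$-integral. For this I would invoke the standard bound $|R_m(w)|\le C_m|w|^{-2m}$, valid in the sector $|\arg w|\le\pi/2$, which contains the whole integration range.
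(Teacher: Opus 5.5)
Your argument is correct, but it reaches the lower-order terms by a different route from the paper.

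The paper uses the digamma integral only to identify the constant. It does so by dominated convergence with the crude bound $|\operatorname{Re}\psi(N+1+it)-\log N|\ll 1+\log(1+t)$. The $N^{-1}$, $N^{-2}$, $N^{-3}$ terms then come from the Fourier side. With $d_k=kV_k-1/(\pi^2k)=-2/(\pi^4k^3)+24/(\pi^6k^5)+O(k^{-7})$ from \cref{thm:fourier-V}, it writes $S_N=H_N/\pi^2+\mathfrak c_*-\gamma/\pi^2-\sum_{k>N}d_k$. It expands $H_N$, $\sum_{k>N}k^{-3}$ and $\sum_{k>N}k^{-5}$ by Euler--Maclaurin. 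Matching constant terms gives $\mathfrak c_*=\mathfrak c$, which proves \eqref{eq:c-series} at the same time.

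You instead expand the exact identity $S_N-\mathfrak c=\pi^{-1}\int_0^\infty e^{-\pi t}\operatorname{Re}\psi(N+1+it)\,\dd t$ using Stirling and the moments $m!/\pi^{m+1}$. This buys three things: the constant is \eqref{eq:c-constant} by construction, no matching step is needed, and the $V_k$ asymptotics play no role in \eqref{eq:SN-asymptotic}. They are not even needed for \eqref{eq:c-series}, since $\sum_{k\leq N}d_k=S_N-H_N/\pi^2$ converges by the asymptotic itself. The paper's route, in turn, reuses the already proved Fourier expansion and needs only real-variable Euler--Maclaurin.

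The price of your route is a quantitative complex Stirling remainder, uniform on $\operatorname{Re}w\geq1$, where the paper needs only a crude bound. This is standard. It holds for all $t\geq0$ because $|w|\geq N+1$, so your restriction to $t=O(\log N)$ is unnecessary. With $M=N+1$ and $x=t^2/M^2\geq0$, the Taylor remainders of $\tfrac12\log(1+x)$, $(1+x)^{-1}$ and $(1-x)(1+x)^{-2}$ are bounded by constant multiples of powers of $x$ on all of $[0,\infty)$. Hence they integrate to $O(M^{-4})$ against $e^{-\pi t}$.

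Your bookkeeping is right. Before re-expansion the integral equals
\[
\pi^{-2}\log M-\tfrac1{2\pi^2M}+\bigl(\tfrac1{\pi^4}-\tfrac1{12\pi^2}\bigr)M^{-2}+\tfrac1{\pi^4M^3}+O(M^{-4}).
\]
After substituting $M=N+1$, the $\pi^{-2}$ contributions at order $N^{-3}$ cancel ($\tfrac13-\tfrac12+\tfrac16=0$), while the $\pi^{-4}$ ones give $-2+1=-1$, as stated.
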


\begin{proof}
Subtracting $\pi^{-2}\log N$ from \eqref{eq:SN-digamma} gives
\[
  S_N-\frac{\log N}{\pi^2}
  =\frac1\pi\int_0^\infty e^{-\pi t}
  \left(\operatorname{Re}\psi(N+1+it)-\log N
  -\operatorname{Re}\psi(1+it)\right)\,\dd t.
\]
The digamma expansion in the right half-plane \cite[Section 5.11(i)]{DLMF} gives,
for each fixed $t$,
$\operatorname{Re}\psi(N+1+it)-\log N\to0$. It also gives the uniform bound
\[
  \left|\operatorname{Re}\psi(N+1+it)-\log N\right|
  \ll 1+\log(1+t),
  \qquad N\geq1, t\geq0.
\]
The right-hand side is integrable against $e^{-\pi t}\,\dd t$, so dominated convergence
proves \eqref{eq:c-constant}.

For the lower-order terms, put
\[
  d_k=kV_k-\frac1{\pi^2k}.
\]
By \cref{thm:fourier-V},
\[
  d_k=-\frac2{\pi^4k^3}
  +\frac{24}{\pi^6k^5}+O(k^{-7}),
\]
so the following series converges absolutely. Set
\[
  \mathfrak c_*=\frac\gamma{\pi^2}+\sum_{k=1}^\infty d_k.
\]
Then
\[
  S_N=\frac{H_N}{\pi^2}+\mathfrak c_*-\frac\gamma{\pi^2}
  -\sum_{k>N}d_k,
\]
Euler--Maclaurin gives
\[
  H_N=\log N+\gamma+\frac1{2N}-\frac1{12N^2}+O(N^{-4}),
\]
\[
  \sum_{k>N}\frac1{k^3}
  =\frac1{2N^2}-\frac1{2N^3}+O(N^{-4}),
  \qquad
  \sum_{k>N}\frac1{k^5}=O(N^{-4}).
\]
Substitution gives \eqref{eq:SN-asymptotic} with $\mathfrak c_*$ in place of
$\mathfrak c$. Comparison of the constant term with the limit already obtained from
\eqref{eq:c-constant} shows that $\mathfrak c_*=\mathfrak c$, proving both
\eqref{eq:SN-asymptotic} and the equivalent series formula \eqref{eq:c-series}.
\end{proof}

The constant $\mathfrak c$ belongs to the truncated endpoint sum, whereas
$C_{\mathrm{lin}}$ belongs to the full first variation. These are distinct limits;
neither determines the conjectural constant $C_\alpha$.

\section*{Statements and Declarations}\label{sec:declarations}

\noindent\textbf{Funding.}
This work was supported by the National Natural Science Foundation of China
(Grant Nos.~12231009 and 11971224).

\medskip
\noindent\textbf{Competing interests.}
The author has no relevant financial or non-financial interests to disclose.

\medskip
\noindent\textbf{Data availability.}
The results are proved analytically in the article and do not rely on an external dataset.
The reported numerical constants are determined by the explicit convergent integrals and
series given in the text.

\medskip
\noindent\textbf{Use of generative AI.}
ChatGPT (OpenAI) was used to assist with language editing.
The author takes full responsibility for the mathematical content, computations,
references, and final wording of the manuscript.

\end{document}